\newtheorem{theorem}{Theorem}[section]
\newtheorem{corollary}[theorem]{Corollary}
\newtheorem{lemma}{Lemma}[section]
\newtheorem{prop}[theorem]{Proposition}
\theoremstyle{remark}
\newcommand{\QQ}{{\mathbb{Q}}} 
\newcommand{\Qbar}{{\overline{\QQ}}} 
\DeclareMathOperator{\Gal}{Gal}
\def\mod#1{{\ifmmode\text{\rm\ (mod~$#1$)}
\else\discretionary{}{}{\hbox{ }}\rm(mod~$#1$)\fi}}
\newcommand{\F}{\mathbb{F}}
\newcommand{\Q}{\mathbb{Q}}
\newcommand{\rhobar}{{\overline{\rho}}}
\newcommand{\fp}{\mathfrak{p}}
\newcommand{\legendre}[2]{\genfrac{(}{)}{}{}{#1}{#2}}
\numberwithin{equation}{section}
\theoremstyle{definition}
\def\Qbar{{\overline{\QQ}}} 
\def\mod{\mathop{\rm mod}}
\def\mod#1{{\ifmmode\text{\rm\ (mod~$#1$)}
\else\discretionary{}{}{\hbox{ }}\rm(mod~$#1$)\fi}}
\def\Java{\hbox{\sc J\kern -1pt{}ava}}
\begin{document}

\title{Sums of two cubes as twisted perfect powers, revisited}

\author{Michael A. Bennett}
\email{bennett@math.ubc.ca}
\address{Department of Mathematics, University of British Columbia, Vancouver, B.C., V6T 1Z2 Canada}
\thanks{The first author was supported in part by a grant from NSERC}
\urladdr{http://www.math.ubc.ca/$\sim$bennett/}

\author{Carmen Bruni}
\email{cbruni@uwaterloo.ca}
\address{Centre for Education in Mathematics and Computing, University of Waterloo, Waterloo, Ontario, N2L 3G1 Canada}
\urladdr{http://www.cemc.uwaterloo.ca/$\sim$cbruni/}

\author{Nuno Freitas}
\email{nuno@math.ubc.ca}
\address{Department of Mathematics, University of British Columbia, Vancouver, B.C., V6T 1Z2 Canada}
\thanks{The third author was supported in part by the grant {\it Proyecto RSME-FBBVA $2015$ Jos\'e Luis Rubio de Francia}}
\urladdr{http://www.math.ubc.ca/$\sim$nuno/}

\begin{abstract}

In this paper, we sharpen earlier work of the first author, Luca and Mulholland \cite{BLM}, showing that the Diophantine equation 
$$
A^3+B^3 = q^\alpha C^p, \; \; ABC \neq 0, \; \; \gcd (A,B) =1,
$$
has, for ``most'' primes $q$ and suitably large prime exponents $p$, no solutions. We  handle a number of (presumably infinite) families where no such conclusion was hitherto known. Through further 
application of certain {\it symplectic criteria}, we are able to make some conditional statements about still more values of $q$; a sample such result is that, for all but $O(\sqrt{x}/\log x)$ primes $q$ up to $x$, the equation
$$
 A^3 + B^3 = q C^p.
$$
has no solutions in  coprime, nonzero integers $A, B$ and $C$, for a positive proportion of prime exponents $p$.
 
\end{abstract}

\maketitle

\section{Introduction}

The problem of classifying perfect powers that are representable as a sum of two coprime integer cubes has a long history.  The nonexistence of cubes $C^3 > 1$ with this property, a special case of Fermat's Last Theorem, was essentially proven by Euler. For higher powers, we have a substantial amount of recent work; at the time of writing, this can be summarized in the following theorem.

\begin{theorem}[Bruin \cite{Br00}, Chen-Siksek \cite{ChSi}, Dahmen \cite{Dah}, Freitas \cite{Fr}, Kraus \cite{Kr}]
There are no solutions in relatively prime nonzero  integers $A, B$ and $C$ to the equation
\begin{equation} \label{start}
A^3+B^3=C^n
\end{equation}
with exponent $n$ satisfying one of
$$
\begin{array}{c} 
3 \leq n\leq 10^9, \;  n \equiv 2 \mod 3,  \; n \equiv 2,3 \mod 5,\\
n \equiv 61 \mod {78}, \; n \equiv 51,103,105 \mod {106} \mbox{, or}\\
    n \equiv 43,49,61,79,97,151,157,169,187,205,259,265,277, \\
   295,313,367,373,385,403,421,475,481,493,511,529, \\
   583, 601,619,637, 691,697,709,727,745,799,805,817,835, \\
     853,907, 913,925,943,961, 1015,1021,1033,1051,1069,1123, \\
     1129,1141, 1159,1177, 1231,1237,1249,1267,1285 \mod {1296}. \\ 
   \end{array}
$$

\end{theorem}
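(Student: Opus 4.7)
The plan is to assemble the statement from its constituent pieces in the literature, since each congruence class in the list requires a distinct variant of the modular method. I would first isolate the input from each of the cited papers \cite{Br00,ChSi,Dah,Fr,Kr}, check that their hypotheses indeed cover the asserted residues, and then describe the common underlying framework in order to make the compilation read uniformly rather than as five disjoint arguments.

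The shared approach is to attach a Frey--Hellegouarch elliptic curve $E = E_{A,B,C}$ to a putative primitive solution of \eqref{start}, apply modularity together with Ribet's level-lowering theorem to show that $\overline{\rho}_{E,p}$ arises from a cuspidal newform of small conductor (typically of level $18$, $36$ or $72$ over $\Q$, or of small level over $\Q(\sqrt{-3})$), and then eliminate each surviving newform $f$ by comparing the traces $a_\ell(f)$ and $a_\ell(E)$ at small auxiliary primes $\ell$. For small $n$, outright elimination succeeds and yields the numerical range $3 \le n \le 10^9$; this relies on a substantial computer search combined with Bruin's descent arguments \cite{Br00} for the initial exponents $n=4,5$. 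For larger $n$, elimination is conditional: it works only when $n$ lies in a specific residue class modulo $\mathrm{ord}_\ell(\alpha/\beta)$, where $\alpha,\beta$ are the Frobenius eigenvalues of $f$ at $\ell$. This is the origin of the congruence conditions modulo $3$, $5$, $78$, $106$ and $1296$ in the statement. For the condition $n\equiv 2\pmod{3}$, Kraus's descent over $\Q(\sqrt{-3})$ bypasses most of this machinery \cite{Kr}, while for $n\equiv 2,3\pmod{5}$ a Frey curve whose associated newform has a $\bmod\,5$-cyclotomic character does the work \cite{Dah}.

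For the finer obstructions modulo $78$, $106$ and especially the long list modulo $1296$, I would invoke the multi-Frey technique developed in \cite{ChSi,Fr}: run the modular method simultaneously with one Frey curve over $\Q$ and a second over $\Q(\sqrt{-3})$ (or another suitable imaginary quadratic field) and intersect the resulting congruence conditions on $n$. Symplectic criteria, comparing the symplectic type of $E[p]$ with that of the $p$-torsion of the elliptic curve attached to each surviving newform, provide the additional parity-type obstructions needed to cut the surviving residues down to the explicit list. The main obstacle will be the elimination step itself: many newforms at small level possess traces which coincidentally match those of $E$ along long arithmetic progressions of $n\bmod M$, so I would expect the combinatorial shape of the $1296$-list to emerge from a systematic enumeration of all residues surviving each available symplectic and trace obstruction across the relevant newforms, keeping careful track of local behaviour at $2$, $3$ and the ramified primes of $\Q(\sqrt{-3})$.
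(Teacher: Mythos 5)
This theorem is not proved in the paper at all: it is a survey statement whose ``proof'' consists precisely of the citations to Bruin, Chen--Siksek, Dahmen, Freitas and Kraus, so your plan of assembling the result from those five sources and verifying that their hypotheses cover the listed residues is exactly what the paper intends, and your description of the common framework (Frey--Hellegouarch curve, modularity, level lowering to levels $18$, $36$, $72$, elimination of all newforms except the one attached to the curve $72A1$, and then congruence conditions on $n$ extracted from the surviving form) is accurate. That said, several of your specific attributions and mechanisms are off. The case $n\equiv 2\pmod 3$ is due to Dahmen, not Kraus; Kraus's contribution in \cite{Kr} is the construction of the Frey curve over $\Q$ and the resolution of the range $17\le p\le 10^4$ (Bruin \cite{Br00} handles $n=4,5$, and Chen--Siksek push the computation to $10^9$). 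More substantively, none of the cited works runs the modular method over $\Q(\sqrt{-3})$ with a second Frey curve: Chen--Siksek obtain the classes modulo $106$ (and $78$) by combining the single Frey curve over $\Q$ with the factorization $A^3+B^3=(A+B)(A^2-AB+B^2)$ and a Kraus-style local/reciprocity argument, while Freitas obtains the long list modulo $1296$ by applying symplectic criteria at $2$ and $3$ to compare $F[p]$ with the $p$-torsion of $72A1$ --- not by a multi-Frey intersection. These are attribution errors rather than logical gaps, and since the paper itself offers no proof, your compilation strategy is the correct one; but as written the proposal would mislead a reader about which technique yields which congruence class.
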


\vskip2ex
Underlying each of these results is an appeal to a particular Frey-Hellegouarch elliptic curve, defined over $\mathbb{Q}$. Just as in the case of Fermat's Last Theorem, with analogous equation $A^n+B^n=C^n$, this curve corresponds to a particular weight $2$, cuspidal newform $f$. In the latter case, Wiles \cite{Wi} showed that $f$ necessarily has level $2$ (whereby the absence of such newforms implies an immediate contradiction). In the case of equation (\ref{start}), however, one finds a corresponding $f$ at one of levels $18, 36$ or $72$. The first two of these are readily handled, but the last is not. The obstruction to completely resolving equation (\ref{start}) is the existence of a particular elliptic curve over $\mathbb{Q}$ with conductor $72$ which, on some level, ``mimics'' a solution to (\ref{start}) (the curve in question is labelled  $72A1$ in
Cremona's tables \cite{Cre})

In an earlier paper \cite{BLM}, the first author, joint with Luca and Mulholland, considered a modification of equation (\ref{start}), where the right-hand-side is replaced by a ``twisted'' version of the shape $q^\alpha C^p$, for $q$ prime (the replacement of the exponent $n$ with a prime one $p$ is without loss of generality). The question the authors of \cite{BLM} wished to answer was whether or not a similar obstruction exists in this new situation. 
Here and henceforth, let us assume that we have a  proper, nontrivial solution
$(a,b,c)$ of the equation 
\begin{equation} \label{ABC}
A^3+B^3= q^\alpha C^p,
\end{equation}
i.e. a solution with $A, B$ and $C$ nonzero, coprime integers and $\alpha$ a positive integer. 
Write $S$ for the set of primes $q \geq 5$ for which there exists an elliptic curve $E/\mathbb{Q}$ with conductor $N (E) \in \{ 18 q, 36 q, 72 q \}$ and at least one nontrivial rational $2$-torsion point. The two main results of \cite{BLM} are the following :

\begin{theorem}[B., Luca, Mulholland \cite{BLM}]  \label{BLM1}
If $p$ and $q \geq 5$ are primes with $p \geq q^{2q}$ such that there exist coprime, nonzero integers $A, B$ and $C$, and a positive integer $\alpha$, satisfying equation $\eqref{ABC}$, then $q \in S$.
\end{theorem}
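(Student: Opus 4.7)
The plan is to apply the modular method in three steps, following the strategy of \cite{BLM}. First, to the putative solution $(a,b,c,\alpha)$ of \eqref{ABC} I attach a Frey--Hellegouarch elliptic curve $E/\Q$ — obtained from Kraus's Frey curve \cite{Kr} for \eqref{start} by a cubic twist by an appropriate power of $q$ — so that $E$ has multiplicative reduction at $q$ and at every prime $\ell\mid c$ with $\ell\nmid 6q$. By construction $E$ has a rational $2$-torsion point, reflecting the factorization $a^3+b^3=(a+b)(a^2-ab+b^2)$, and a Tate's-algorithm calculation yields
\[
N(E)\,=\,N_0\cdot\prod_{\ell\mid c,\ \ell\nmid 6q}\ell,\qquad N_0\in\{18q,36q,72q\}.
\]
Modularity then associates $E$ with a weight-$2$ newform $f_E$, and Ribet's level-lowering theorem — applicable because the $p$-th power in \eqref{ABC} forces $p\mid v_\ell(\Delta_E)$ at every $\ell\mid c$ with $\ell\nmid 6q$ — produces a newform $g$ of level $N\in\{18q,36q,72q\}$ satisfying $\overline{\rho}_{E,p}\cong\overline{\rho}_{g,p}$.

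The crucial step is to rule out those $g$ whose Hecke eigenvalue field $K_g$ strictly contains $\Q$. For each auxiliary prime $\ell\nmid 6pq$, the congruence forces
\[
p \;\big|\; \Norm_{K_g/\Q}\!\bigl(a_\ell(E)-a_\ell(g)\bigr),
\]
which, combined with the Hasse--Weil bound and a careful choice of many small auxiliary primes, sieves out all irrational $g$ provided $p$ exceeds an explicit threshold $B(q)$; the key estimates of \cite{BLM} show $B(q)<q^{2q}$, so under the hypothesis $p\ge q^{2q}$ every surviving $g$ is rational. Hence $g$ corresponds to an isogeny class of elliptic curves over $\Q$ of conductor $N\in\{18q,36q,72q\}$. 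Finally, this class necessarily contains a curve $E'$ admitting a rational $2$-isogeny: the local structure of $\overline{\rho}_{E,p}$ at $2$, dictated by the rational $2$-torsion on $E$, is transported through the congruence $\overline{\rho}_{E,p}\cong\overline{\rho}_{E',p}$ to force an analogous local structure on some member of the class. The kernel of this $2$-isogeny supplies the required rational $2$-torsion point, placing $q\in S$.

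The main obstacle is the second step: making the bound $B(q)$ explicit while keeping it below $q^{2q}$ requires delicate uniform control over Fourier coefficients of newforms at levels up to $72q$ as $q$ varies, together with an efficient selection of auxiliary primes. Verifying that no cuspidal newform at such a level can be irrational in a way compatible with the Frey curve — including eliminating Eisenstein-type congruences — is the technical heart of the argument in \cite{BLM}.
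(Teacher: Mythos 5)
Your overall architecture---Frey--Hellegouarch curve, modularity, Ribet level lowering to level $18q$, $36q$ or $72q$, elimination of newforms with irrational eigenvalues via $p \mid \Norm_{K_g/\Q}(a_\ell(g)-a_\ell(F))$ for $p \ge q^{2q}$, and finally extraction of rational $2$-torsion---is the approach of \cite{BLM}, as sketched in Section 2 of this paper. One misdescription first: the Frey curve is not obtained from Kraus's curve by a ``cubic twist by a power of $q$''. It is literally the Darmon--Granville curve $y^2 = x^3 + 3ABx + B^3 - A^3$ (or its even-$C$ companion), depending only on $A$ and $B$; the factor $q^{\alpha}$ enters only through the discriminant $\Delta(F) = -2^{12i-8}3^3q^{2\alpha}C^{2p}$, which is what gives multiplicative reduction at $q$ and the stated levels. (A cubic twist would in any case require $j=0$, which the Frey curve does not have.)

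The genuine gap is your last step. A rational point of order $2$ on $F$ is not encoded in any ``local structure of $\overline{\rho}_{F,p}$ at $2$'', and there is no local-at-$2$ datum of a mod-$p$ representation from which one can read off $2$-torsion of the congruent curve $E'$; as written, this step is not an argument. The correct mechanism is global. Since $F$ has a rational $2$-torsion point, $a_\ell(F)$ is even for every odd prime $\ell$ of good reduction. For $\ell \nmid 6pqC$ with $4\sqrt{\ell} < p$, the congruence $a_\ell(E') \equiv a_\ell(F) \pmod{p}$ and the Hasse bounds force the exact equality $a_\ell(E') = a_\ell(F)$, so $a_\ell(E')$ is even and $\Frob_\ell$ acts on $E'[2]$ with characteristic polynomial $x^2 - a_\ell x + \ell \equiv (x+1)^2 \pmod 2$. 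By Chebotarev every element of the image of $\overline{\rho}_{E',2}$ in $\GL_2(\F_2) \cong S_3$ then has $1$ as an eigenvalue, which excludes the elements of order $3$ and forces the image to have order at most $2$; hence $E'[2]$ contains a Galois-fixed nonzero vector, i.e.\ $E'$ has a rational point of order $2$ and $q \in S$. Separately, you never verify that $\overline{\rho}_{F,p}$ is irreducible, which is a hypothesis of Ribet's theorem (this is where $p \ge 17$ and Mazur-type results on rational $p$-isogenies of curves with $2$-torsion are used); your reference to ``Eisenstein-type congruences'' on the newform side does not substitute for it.
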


\begin{theorem}[B., Luca, Mulholland \cite{BLM}] \label{fudge2}
 Let  $\pi_S (x) = \# \{ q \leq x \; : \; q \in S \}$. Then
\begin{equation} \label{prig}
\pi_S (x) \ll \sqrt{x} \log^2 (x).
\end{equation}
\end{theorem}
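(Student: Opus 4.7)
The plan is to convert the existence of such an elliptic curve into a family of Diophantine equations with few terms, and then count the primes $q$ for which any of them is soluble.  Any $E/\Q$ with a rational $2$-torsion point admits a Weierstrass model $y^2 = x(x^2 + Ax + B)$ with $A, B \in \Z$, having invariants $c_4 = 16(A^2 - 3B)$ and $\Delta = 16 B^2(A^2 - 4B)$.  The assumption $N(E) \in \{18q, 36q, 72q\}$ with $q \geq 5$ forces $E$ to have multiplicative reduction at $q$ (so $q \nmid c_4$, $q \mid \Delta$) and good reduction outside $\{2,3,q\}$.  I first observe that $q$ cannot divide both $B$ and $A^2 - 4B$: otherwise $q \mid 4B$ combined with $q \mid A^2 - 4B$ forces $q \mid A^2$, hence $q \mid A$, and then $q \mid A^2 - 3B = c_4/16$, contradicting multiplicative reduction.

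After passing to a minimal model, the $2$- and $3$-adic valuations of $B$ and of $A^2 - 4B$ are bounded, so one may write
\[
B = \eps_1 \, 2^{a} 3^{b} q^{r}, \qquad A^2 - 4B = \eps_2 \, 2^{c} 3^{d} q^{s},
\]
with $\eps_i \in \{\pm 1\}$, with $(a,b,c,d)$ in a bounded range, and with exactly one of $r, s$ equal to zero.  Eliminating $B$ yields an equation of the shape $A^2 = M + N q^{k}$, with $|M|, |N|$ bounded and $k \geq 1$.  There are $O(\log^2 x)$ choices for the signs and for the relevant tuples $(a, b, c, d)$, once one accounts for the necessary minimisation manoeuvres and for the possibility of $2$-isogenous curves; for each such choice, $(M, N, k)$ is determined up to $O(1)$ possibilities.

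For each fixed triple $(M, N, k)$ I count as follows.  If $k = 1$, then $q = (A^2 - M)/N$ is determined by $A$, and the constraint $q \leq x$ forces $|A| \ll \sqrt{x}$, producing at most $O(\sqrt{x})$ candidate primes.  If $k \geq 2$, the same argument restricts $A$ to $O(\sqrt{x})$ integers, but $q$ is now recovered only after extraction of a $k$-th root, so the prime count shrinks to $O(x^{1/k}) \leq O(\sqrt{x})$.  Summing these estimates over the $O(\log^2 x)$ admissible parameters gives $\pi_S(x) \ll \sqrt{x} \log^2 x$.

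The main obstacle is the careful bookkeeping of minimisation and of $2$-isogenous curves, since the naive model $y^2 = x(x^2 + Ax + B)$ is never minimal at $2$ (as $c_4$ is automatically divisible by $16$) and since an elliptic curve $E$ in $S$ together with its $2$-isogenous partner $E'$ may both contribute to the count, with reduction types that must be reconciled across the isogeny.  Once this technical bookkeeping is settled, the final estimate is elementary: it reflects the fact that primes of the form $(A^2 - M)/N$ are no more numerous than the integers $A$ themselves.
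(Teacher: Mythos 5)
The paper does not actually reprove this statement---it is quoted from \cite{BLM}---but your argument as written has two genuine gaps. First, the claim that after minimalisation the $2$- and $3$-adic valuations of $B$ and of $A^2-4B$ are bounded is false. A curve of conductor $18q$ has multiplicative reduction at $2$, and all three conductors permit Kodaira type $I_n^*$ at $3$, so $v_2(\Delta_{\min})$ and $v_3(\Delta_{\min})$ are unbounded; this is visible in Theorems \ref{theorem18q}--\ref{theorem72q}, whose families carry free exponents $a \geq 5$ and $b \geq 0$. Consequently $M$ and $N$ are not bounded but are $\{2,3\}$-units of size up to roughly $|\Delta_{\min}|$, and the factor $\log^2 x$ in the final bound is precisely the count of admissible pairs of exponents of $2$ and $3$ (each $O(\log x)$)---not a byproduct of ``minimisation manoeuvres'' and $2$-isogenies, which contribute only $O(1)$ choices. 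Your write-up is internally inconsistent here: it asserts both that the parameters lie in a bounded range and that there are $O(\log^2 x)$ of them.

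Second, and more seriously, once $M$ is an unbounded $\{2,3\}$-unit the step ``$q \leq x$ forces $|A| \ll \sqrt{x}$'' fails for the difference-type families. In representations of the shape $q = |d^2 - 2^a 3^b|$ (which genuinely occur, e.g.\ family (4) of Theorem \ref{theorem18q} with $\delta_1 \neq \delta_2$), both $d^2$ and $2^a 3^b$ may vastly exceed $x$ while their difference is a prime at most $x$; naively summing $O(\sqrt{x})$ values of $d$ over all pairs $(a,b)$ then diverges. Controlling these families is the real content of the bound in \cite{BLM}: writing $2^a 3^b = D m^2$ with $D \in \{2,3,6\}$ and $m$ a $\{2,3\}$-unit, one is led to Pell-type equations $d^2 - Dm^2 = \pm q$, whose solutions in each class form a binary recurrence in which $m$ can be a $\{2,3\}$-unit only $O(1)$ times by primitive-divisor considerations; only after this reduction does the $O(\sqrt{x})$-per-family count apply. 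So while your skeleton (the $2$-torsion model, the observation that $q \nmid \gcd(B, A^2-4B)$, and the reduction to few-term exponential equations) matches the correct approach, the counting step does not go through as written.
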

This latter result may be reasonably easily sharpened, through sieve methods, but, even as stated, demonstrates that $\pi_S(x) = o (\pi (x))$ and hence that we may ``solve'' equation (\ref{ABC}) for ``almost all'' primes $q$ (i.e. for almost all primes, there is no analogous obstruction to that provided by the curve $72A1$ for equation (\ref{start})).

Our goal in the paper at hand is to improve this result by treating equation (\ref{ABC}) for a significant number of the primes in $S$. We begin by defining $S_0$ to be the subset of $S$ consisting of those primes $q \geq 5$ for which there exists an elliptic curve $E/\mathbb{Q}$ with conductor $N (E) \in \{ 18 q, 36 q, 72 q \}$, nontrivial rational $2$-torsion and the additional property that discriminant $\Delta (E) = T^2$ or $\Delta (E) = -3T^2$ for some integer $T$.
The first main result of this paper is the following sharpening of Theorem \ref{BLM1}.

\begin{theorem} \label{main1} 
If $p$ and $q \geq 5$ are primes with $p \geq q^{2q}$ such that there exist coprime, nonzero integers $A, B$ and $C$, and a positive integer $\alpha$, satisfying equation $\eqref{ABC}$, then $q \in S_0$.
\end{theorem}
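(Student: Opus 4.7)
To a putative proper, nontrivial solution $(a,b,c)$ of \eqref{ABC}, we attach a Frey--Hellegouarch elliptic curve $F/\QQ$ in the manner of \cite{BLM}; the precise Weierstrass model depends on the $2$- and $3$-adic data of $a+b$, but two features are common to every case. First, $F$ carries a $\QQ$-rational $2$-torsion point. Second, the remaining two roots of its $2$-division polynomial lie in $K := \QQ(\sqrt{-3})$, reflecting the factorization $a^3+b^3 = (a+b)(a+\zeta_3 b)(a+\zeta_3^{-1}b)$. Equivalently,
\[
\Delta(F) \in \{1,-3\}\cdot (\QQ^*)^2 \text{ in } \QQ^*/(\QQ^*)^2.
\]

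Modularity of $F$, Mazur's irreducibility result, and Ribet's level-lowering theorem, applied exactly as in the proof of Theorem~\ref{BLM1}, produce a weight-$2$ newform $f$ of level in $\{18q,36q,72q\}$ with $\bar\rho_{f,p}\simeq\bar\rho_{F,p}$; the bound $p\geq q^{2q}$ is what forces the Serre conductor to drop to one of these three values. The argument of \cite{BLM} underlying Theorem~\ref{BLM1} shows further that $f$ comes from an elliptic curve $E/\QQ$ of the same conductor with a $\QQ$-rational $2$-torsion point, yielding $q\in S$. The new content of Theorem~\ref{main1} is to upgrade this to $q\in S_0$ by propagating the square-class constraint on $\Delta(F)$ to $\Delta(E)$.

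Since $N(E)\mid 72q$, the integer $\Delta(E)$ is a priori supported on $\{2,3,q\}$, so the condition $\Delta(E)\in\{T^2,-3T^2\}$ reduces to parity constraints on $v_2(\Delta(E))$, $v_3(\Delta(E))$, $v_q(\Delta(E))$ together with a global sign check. Each of these parities is determined by the Kodaira type of $E$ at the corresponding bad prime, which in turn is controlled -- up to a well-understood twist ambiguity -- by $\bar\rho_{E,p}|_{G_{\QQ_\ell}}\simeq\bar\rho_{F,p}|_{G_{\QQ_\ell}}$. The plan is therefore a case-by-case analysis, one Frey model from \cite{BLM} at a time, matching the Kodaira types of $F$ and $E$ at $\ell\in\{2,3,q\}$ and verifying that the induced $\ell$-adic contribution to $\Delta(E)$ falls into the same square class as that of $\Delta(F)$. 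The global sign is pinned down from the real-period behaviour of $f$, or equivalently from the root number of its associated $L$-function.

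The main obstacle is at $\ell=2$, where $F$ typically has additive reduction of Kodaira type depending delicately on $v_2(a+b)$ and where the mod-$p$ local representation fixes the Kodaira type of $E$ only up to an unramified (or, in some cases, wild) quadratic twist; the existence of the $\QQ$-rational $2$-torsion point on $F$ is what rigidifies the twist and lets us read off the parity of $v_2(\Delta(E))$ unambiguously. A similar but simpler analysis at $3$ and at $q$ then completes the argument.
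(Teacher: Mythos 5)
Your setup (Frey--Hellegouarch curve, level lowering to a rational newform of level $18q$, $36q$ or $72q$, the observation that $\Delta(F)$ lies in the square class of $-3$) matches the paper, but the mechanism you propose for upgrading $q\in S$ to $q\in S_0$ does not work. The square class of $\Delta(E)$ is not determined by $\rhobar_{E,p}$ restricted to the decomposition groups at $2$, $3$ and $q$ --- indeed it is not even an isogeny invariant, whereas the mod-$p$ representation is. For example, the four curves in the class $18q.1.a$ have discriminants $2^{2a-8}3^{2b+6}q^2$, $2^{a-4}3^{b+6}q$, $\pm 2^{4a-16}3^{4b+6}q$ and $\mp 2^{a-4}3^{b+6}q^4$, lying in several distinct square classes, yet all have the same mod-$p$ representation; so no amount of local information about $\rhobar_{E,p}$ can ``read off'' the parities of $v_\ell(\Delta(E))$. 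At the multiplicative prime $q$ the local representation only sees $v_q(\Delta(E))$ modulo $p$ up to squares in $\F_p^\times$ (this is exactly what the Kraus--Oesterl\'e symplectic criterion extracts), never its integer parity; and at $2$ and $3$ the Kodaira type does not separate, e.g., the family $18q.4.a$ with $b$ odd (which lands in $S_0$) from the same family with $b$ even (which must be excluded). Your plan also leaves the hypothesis $p\geq q^{2q}$ with nothing to do, which is a sign that the contradiction has no source: a purely local argument would prove the statement for all $p\geq 17$, which is not what is being claimed.

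What the paper actually does is global and uses auxiliary \emph{good} primes. Since $\Delta(F)=-3T^2$ and $F$ has a rational $2$-torsion point, $F$ acquires full $2$-torsion over $\F_\ell$ for every good $\ell\equiv 1\bmod 6$, so $a_\ell(F)\equiv \ell+1 \bmod 4$. The congruence $a_\ell(E)\equiv a_\ell(F)\bmod \fp$ plus Hasse forces $p\leq \ell+1+2\sqrt{\ell}$ as soon as one finds a single good prime $\ell\equiv 1\bmod 6$ with $a_\ell(E)\not\equiv \ell+1\bmod 4$. Using the explicit classification of all curves of conductor $18q$, $36q$, $72q$ with rational $2$-torsion (Theorems \ref{theorem18q}--\ref{theorem72q}), the latter condition is translated into $\bigl(\tfrac{\Delta(E)}{\ell}\bigr)=\bigl(\tfrac{v}{\ell}\bigr)=-1$ for a model $y^2=x^3+ux^2+vx$, i.e.\ into congruence conditions $\ell\equiv \ell_0 \bmod{24}$ together with a prescribed value of $\bigl(\tfrac{q}{\ell}\bigr)$; an explicit prime-counting theorem in progressions modulo $24q$ then produces such an $\ell<e^q$, whence $p<q^{2q}$, the desired contradiction. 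This auxiliary-prime step, and the explicit Linnik-type input that makes it effective, are the essential missing ingredients in your proposal.
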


It is by no means clear that the set $S_0$ is appreciably ``smaller'' than $S$. In fact, our expectation is that their counting functions satisfy
$$
\pi_S (x) \sim c_1 \sqrt{x} \log x \; \; \mbox{ and } \; \; \pi_{S_0} (x) \sim c_2 \sqrt{x} \log x,
$$
for positive constants $c_1$ and $c_2$, where $c_2 < c_1$.
A cursory check of Cremona's elliptic curve database \cite{Cre} reveals that
the primes $5 \leq q < 1000$ lying outside $S$ are precisely 
$$
q=197, 317, 439, 557, 653, 677, 701, 773, 797 \mbox{ and } 821,
$$
while, in the same range,
the primes in $S$ but not $S_0$ are
$$
\begin{array}{c}
q = 53, 83, 149, 167, 173, 199, 223, 227, 233,  263, 281, 293, 311, 347, 353, 359, 389,  \\
401, 419, 443, 449, 461, 467, 479, 487, 491, 563, 569, 571, 587, 599, 617, 641, 643,  \\
659, 719, 727, 739, 743, 751, 809, 811, 823, 827, 829, 839, 859, 881, 887, 907, 911,  \\
929, 941, 947, 953, 977 \mbox{ and }  983. \\
\end{array}
$$
It is, in fact, possible to give a much more concrete characterization of $S_0$.
Let us define sets
$$
S_1 = \left\{ q \mbox{ prime }  :  q=2^a 3^b \pm 1, \; \; a \in \{ 2, 3 \} \mbox{ or } a \geq 5, b \geq 0 \right\},
$$
$$
S_2 = \left\{ q \mbox{ prime }  :  q=\left| 2^a  \pm 3^b \right|, \; \; a \in \{ 2, 3 \} \mbox{ or } a \geq 5, b \geq 1 \right\},
$$
$$
S_3 = \left\{ q \mbox{ prime }  :  q=\frac{2^a +1}{3}, \; \; a \geq 5 \mbox{ odd } \right\},
$$
$$
S_4 = \left\{ q \mbox{ prime }  :  q=d^2+2^a 3^b, \; \; a \in \{ 2, 4 \} \mbox{ or } a \geq 8 \mbox{ even, } b \mbox{ odd} \right\},
$$
$$
S_5 = \left\{ q \mbox{ prime }  :  q=3 d^2+2^a, \; \; a \in \{ 2, 4 \} \mbox{ or } a \geq 8 \mbox{ even }  \right\},
$$
$$
S_6 = \left\{ q \mbox{ prime }  :  q=\frac{d^2+3^b}{4}, \; \; d \mbox{ and } b \mbox{ odd} \right\},
$$
$$
S_7 = \left\{ q \mbox{ prime }  :  q=\frac{3d^2 +1}{4}, \; \; d \mbox{ odd }  \right\}
$$
and
$$
S_8 = \left\{ q \mbox{ prime }  :  q=\frac{3v^2 -1}{2},  \; \; u^2-3v^2=-2  \right\}.
$$
Here, $a, b, u, v$ and $d$ are integers.

\begin{prop}
We have
$$
S_0 = S_1 \cup S_2 \cup S_3  \cup S_4 \cup S_5  \cup S_6 \cup S_7  \cup S_8.
$$
\end{prop}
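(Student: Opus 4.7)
The plan is to parametrize explicitly all elliptic curves $E/\Q$ satisfying the three conditions defining $S_0$: conductor $N(E) \in \{18q, 36q, 72q\}$, a rational $2$-torsion point, and $\Delta(E) = T^2$ or $\Delta(E) = -3T^2$ for some integer $T$. Translating the rational $2$-torsion point to the origin yields a model
$$
E : y^2 = x(x^2 + Ax + B), \qquad \Delta(E) = 16 B^2(A^2-4B).
$$
Since $16B^2$ is a square, the discriminant hypothesis is equivalent to $A^2 - 4B$ being either a nonzero square (\emph{Case I}) or $-3$ times a nonzero square (\emph{Case II}).

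In Case I, $E$ has full rational $2$-torsion and a model $y^2 = x(x-r)(x-s)$ with $r, s \in \Z$ and, after rescaling, $\gcd(r,s)=1$; the three quantities $r$, $s$ and $r-s$ are then pairwise coprime, and the conductor constraint forces each of them to be a $\{2,3,q\}$-unit. Since $q$ divides $N(E)$ to the first power, exactly one of $r$, $s$, $r-s$ is divisible by $q$ (to the first power), while the other two are of the shape $\pm 2^{a} 3^{b}$. This produces $S$-unit equations of the shape $\pm 2^a 3^b \pm 2^{a'} 3^{b'} = \pm q$ or $\pm q\cdot 2^{a'} 3^{b'}$, whose admissible solutions reproduce the families $S_1$, $S_2$ and $S_3$. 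In Case II, the two nontrivial $2$-torsion points of $E$ are conjugate over $\Q(\sqrt{-3})$. Writing $A^2 - 4B = -3 D^2$ and tracking the $2$-, $3$- and $q$-adic valuations of $A$, $B$ and $D$ imposed by the conductor, one is led to a short list of representations of $q$ by the binary forms $d^2 + 2^a 3^b$, $3d^2 + 2^a$, $(d^2+3^b)/4$, $(3d^2+1)/4$, together with the condition $2q = 3v^2 - 1$ coupled to $u^2 - 3v^2 = -2$; these are precisely the defining relations of $S_4$ through $S_8$. The reverse inclusion is then obtained by writing down, for each prime $q$ in any $S_i$, an explicit Weierstrass model and verifying its conductor, discriminant and $2$-torsion structure directly via Tate's algorithm.

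The main obstacle is the exhaustive case analysis needed to translate the conductor condition $N(E) \in \{18q, 36q, 72q\}$ into the precise list of admissible $2$- and $3$-adic valuations of $A$, $B$, $r$, $s$ and $D$ in the two cases. The peculiar exponent restrictions in the definitions of $S_1, \ldots, S_8$ (for example, $a \in \{2,3\}$ or $a \geq 5$ in $S_1$, which excludes $a = 0, 1, 4$) reflect exactly those valuations at $2$ that, after global minimization, leave the conductor exponent at $2$ inside $\{1,2,3\}$ and force the exponent at $3$ to equal $2$. Verifying both the exhaustiveness and the non-redundancy of the resulting classification, through repeated invocation of Tate's algorithm at the primes $2$ and $3$, is where the bulk of the bookkeeping will live.
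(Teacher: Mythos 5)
Your overall strategy is the one that actually underlies the paper's argument: there the Proposition follows by combining the classification theorems for conductors $18q$, $36q$ and $72q$ (Theorems \ref{theorem18q}--\ref{theorem72q}, adapted from the theses of Mulholland and Bruni, and proved there by precisely the analysis you sketch --- a model with a $2$-torsion point at the origin, the dichotomy between square discriminant, i.e.\ full $2$-torsion leading to $S$-unit equations and the families $S_1,S_2,S_3$, and discriminant $-3$ times a square, i.e.\ $2$-torsion field $\QQ(\sqrt{-3})$ leading to binary-form representations and $S_4,\dots,S_8$, together with Tate's algorithm at $2$ and $3$) --- with the discriminant filter recorded in Corollary \ref{wonder}. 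So the decomposition is right, and the exponent restrictions in the $S_i$ do arise exactly as you say.

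There is, however, one concrete gap. In Case I you assert that, because $q$ divides $N(E)$ to the first power, exactly one of $r$, $s$, $r-s$ is divisible by $q$ \emph{to the first power}. Multiplicative reduction at $q$ only forces $q \nmid c_4(E)$; it places no upper bound on $v_q(\Delta(E))$, so $q^n$ with $n\geq 2$ can divide one of $r$, $s$, $r-s$, and this genuinely happens: for instance $17^2 = 2^5\cdot 3^2 + 1$ produces a curve with full rational $2$-torsion, square discriminant and conductor $2^x 3^y\cdot 17$, which falls outside your parametrization (it accounts for sporadic isogeny classes such as those at level $306$ in Theorem \ref{theorem18q}), and one must then verify separately that $17 = 2^3+3^2$ still lies in $S_2$. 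The same phenomenon in Case II is the source of the families $36q.5$, $72q.10$ and $72q.12$ of the paper, parametrized by $q^n = \pm\left(d^2 - 2^a 3^b\right)$ with $n$ composed of primes $\geq 7$; these must be shown never to have discriminant of the shape $T^2$ or $-3T^2$ (they do not: modulo squares their discriminants are $\pm q$, $3$, $2$ or $6$). Without a third branch of the case analysis handling $v_q(\Delta) > 2$ --- which resolves into the finitely many sporadic isogeny classes listed in Theorems \ref{theorem18q}--\ref{theorem72q} plus the $q^n$ families above --- neither the exhaustiveness of the inclusion $S_0 \subseteq S_1\cup\dots\cup S_8$ nor the completeness of your list of defining relations is established.
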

An advantage of this characterization is that it makes it a routine matter to check if a given prime is in $S_0$ (something that is far from being true for $S$). 
It also allows one to rather easily find, via local conditions, sets of primes outside $S_0$; simply checking that $S_0$ contains no primes which are simultaneously $5 \mod{8}$, $2 \mod{3}$ and $3 \mod{5}$, yields  that if $q \equiv 53 \mod{120}$, then $q \not\in S_0$. More generally, from Theorem \ref{main1}, we deduce the following.

\begin{corollary} \label{cor-blimey} 
If $p$ and $q$ are primes with either $q \equiv 53 \mod{D_1}$ for $D_1 \in \{ 96, 120, 144 \}$ or $q \equiv 65 \mod{D_2}$ for $D_2 \in \{  81, 84 \}$, and $p \geq q^{2q}$, then there are no coprime, nonzero integers $A, B$ and $C$, and positive integers $\alpha$, satisfying equation $\eqref{ABC}$.
\end{corollary}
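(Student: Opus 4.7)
The plan is to combine Theorem~\ref{main1} with the characterization of $S_0$ given in the Proposition, reducing the problem to a finite congruence check. By Theorem~\ref{main1}, any hypothetical solution with $p \geq q^{2q}$ forces $q \in S_0 = S_1 \cup S_2 \cup \cdots \cup S_8$, so it suffices to show that a prime $q$ satisfying any of the five congruence conditions in the statement lies in none of the $S_i$.

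For each family $S_i$, membership places $q$ in an exponential Diophantine set involving powers of $2$ and $3$ (possibly together with a square factor), and this structure severely restricts the residues of $q$ modulo small prime powers. The strategy is, for each target congruence on $q$, to split the modulus $D_j$ into its prime-power components ($120 = 8\cdot 3\cdot 5$, $96 = 32\cdot 3$, $144 = 16\cdot 9$, $81 = 3^4$, $84 = 4\cdot 3\cdot 7$), then check each $S_i$ modulo those components. The remark preceding the corollary already does this for $q \equiv 53\pmod{120}$: reduction mod $8$, mod $3$ and mod $5$ rules out every $S_i$ simultaneously.

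A typical check for the remaining cases runs as follows. For $S_1 = \{2^a 3^b\pm 1\}$, an element with $a \geq 3$ is $\equiv \pm 1\pmod{8}$ and with $a = 2$ is $\equiv 3$ or $5\pmod{8}$; going to mod~$16$ or mod~$32$ tightens this further, and the mod~$3$ or mod~$9$ reduction forces $b$ into a small range or forces $b = 0$, after which $q$ is pinned down to finitely many explicit candidates. For the quadratic families $S_4$--$S_8$, the defining relation produces an immediate restriction to specified quadratic residue classes modulo any odd auxiliary prime $\ell \nmid 6$ (and a similar $2$-adic restriction from the allowed values of $a$), which one combines with the mod-$3$ or mod-$9$ information to exclude the target residue. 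For the $\pmod{81}$ case, the mod-$27$ and mod-$9$ reductions are the decisive ingredients for the families $S_1$--$S_3$ (where large powers of $3$ force $q \equiv \pm 1\pmod{27}$), while for $\pmod{84}$ the mod-$7$ check supplements the usual mod-$8$ and mod-$3$ ones.

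The main obstacle is not conceptual but combinatorial: there are $5 \times 8 = 40$ (congruence, family) pairs, each splitting into several subcases depending on the parity and range of $a, b$ and the auxiliary integers $d, u, v$. I would organize the verification by tabulating, for each $S_i$, the possible residues of $q$ modulo $M \in \{8, 16, 32, 3, 9, 27, 5, 7\}$, and then cross-referencing with the five target residues. Each individual check is routine, and the finiteness of the case analysis makes the corollary follow immediately from Theorem~\ref{main1} and the Proposition.
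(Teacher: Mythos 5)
Your strategy is exactly the paper's: Theorem \ref{main1} reduces the corollary to showing that the five congruence classes avoid $S_0=S_1\cup\cdots\cup S_8$, and the paper offers nothing beyond the sample computation for $53\bmod{120}$ (which does work, and for which your outline is fine). The genuine gap is that the ``routine'' local checks do not close for all five moduli, because several $S_i$ are parametrized by exponents, and a congruence on $q$ only pins those exponents into arithmetic progressions, not into a finite list. The cleanest failure is $D_2=81$ against $S_1$: your claim that for $S_1$--$S_3$ ``large powers of $3$ force $q\equiv\pm1\pmod{27}$'' is valid only for $b\ge 3$, and the subfamily $q=3\cdot 2^{a}-1$ (i.e.\ $b=1$) satisfies $q\equiv 65\pmod{81}$ precisely when $2^{a}\equiv 22\pmod{27}$, i.e.\ for every $a\equiv 14\pmod{18}$ (the order of $2$ modulo $27$ is $18$ and $2^{14}\equiv 22$). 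No congruence computation can dispose of this infinite progression, and in fact it contains a prime: $3\cdot 2^{1274}-1$ is prime and $1274\equiv 14\pmod{18}$, so this prime lies in $S_1\subset S_0$ and is $\equiv 65\pmod{81}$. Hence the implication ``$q\equiv 65\pmod{81}\Rightarrow q\notin S_0$'' is false, and the $D_2=81$ case cannot be obtained from Theorem \ref{main1} by residue checks at all.

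The same phenomenon affects $S_2$ for the moduli lacking the mod-$5$ information that kills $S_2$ in the $120$ case: every $q=2^{a}-3^{b}$ with $a\ge 5$ odd and $b\equiv 7\pmod 8$ is $\equiv 53\pmod{96}$ (e.g.\ $2^{15}-3^{7}=30581$), every $q=2^{a}-3^{b}$ with $a\equiv 3\pmod 6$, $a\ge 9$, $b\equiv 3\pmod 4$ is $\equiv 53\pmod{144}$ (e.g.\ $2^{9}-3^{3}=485$), and every $q=2^{a}+3^{b}$ with $a\equiv 3\pmod 6$ and $b\equiv 0\pmod 6$ is $\equiv 65\pmod{84}$ (e.g.\ $2^{3}+3^{6}=737$). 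Removing these candidates would require proving an infinite family of Pillai-type numbers composite, which is not a finite check. So the $5\times 8$ tabulation you propose is the right organizational scheme, and it does dispose of $S_3$--$S_8$ everywhere and of everything modulo $120$, but it cannot complete the proof as stated; this defect is inherited from the paper's own (unwritten) argument, and your proposal conceals it behind the word ``routine'' rather than resolving it.
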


For primes in $S_0$, we are often still able to say something about solutions to (\ref{ABC}), in many cases eliminating a positive proportion of the possible prime exponents $p$. Indeed, let us define 
$$
T = S_7 \cup \left\{  q \mbox{ prime }  :  q=3 d^2+16, \; \; d \in \mathbb{Z}  \right\},
$$
and, to simplify matters, suppose that $\alpha = 1$ in (\ref{ABC}),  focussing our attention on the equation
\begin{equation} \label{gump}
A^3 + B^3 = q \, C^p.
\end{equation}
We have the following.
\begin{theorem} \label{shark99}
If $q$ is a prime with $q \not\in T$, then, for a positive proportion of primes $p$, there are no solutions to equation \eqref{gump} in coprime nonzero integers $A, B$ and $C$.
\end{theorem}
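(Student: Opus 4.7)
The plan combines Theorem~\ref{main1} with the symplectic method applied to the Frey--Hellegouarch curve attached to a hypothetical solution. When $q \notin S_0$, Theorem~\ref{main1} already gives the conclusion for every prime $p \geq q^{2q}$, a density-$1$ set, so it suffices to treat $q \in S_0 \setminus T$. To any coprime solution $(a,b,c)$ of \eqref{gump}, attach the standard Frey curve $F = F_{a,b,c}$ used in \cite{BLM}; its discriminant has the form $\Delta_F = 2^{e_2} 3^{e_3} q^{e_q} \cdot N^{2p}$, where $N$ is the $\{2,3,q\}$-free part of $c$ and the $e_i$ are explicit. Modularity together with Ribet's level-lowering theorem yields an isomorphism $\rhobar_{F,p} \simeq \rhobar_{E,p}$ for some elliptic curve $E/\Q$ of conductor in $\{18q, 36q, 72q\}$ with a nontrivial rational $2$-torsion point. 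Because $q \in S_0$, we may insist further that $\Delta_E = T^2$ or $\Delta_E = -3T^2$ for some $T \in \Z$.

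Next apply the symplectic criteria of Kraus--Oesterl\'e and Halberstadt--Kraus, in the sharpened form of Freitas--Kraus. The global isomorphism $\rhobar_{F,p} \simeq \rhobar_{E,p}$ is either symplectic or anti-symplectic, and at a prime $\ell$ where both $E$ and $F$ have multiplicative reduction and $p \nmid v_\ell(\Delta_E)\, v_\ell(\Delta_F)$, its local type is symplectic if and only if the product $v_\ell(\Delta_E)\, v_\ell(\Delta_F)$ is a square modulo $p$. For $\ell \mid N$ with $\ell \nmid 6q$ we have $v_\ell(\Delta_E) = 0$, so these primes carry no information; the useful comparisons occur only at $\ell \in \{2,3,q\}$.

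For each family $S_i$ with $i \in \{1,\ldots,8\} \setminus \{7\}$, and away from the additional excluded subset $\{q = 3d^2+16\}$, one explicitly identifies a candidate $E$ and tabulates the valuations $v_\ell(\Delta_E)$ for $\ell \in \{2,3,q\}$. The hypothesis $q \notin T$ is designed to force these valuations into a parity configuration in which two of the three local symplectic types agree only when a specific quadratic residue condition
\[
\Legendre{\alpha_q}{p} = 1
\]
holds for an explicit non-square integer $\alpha_q$ depending on the family. By quadratic reciprocity the set of primes $p$ with $\Legendre{\alpha_q}{p} = -1$ has natural density $1/2$; for each such $p$ the symplectic dichotomy is violated, so no isomorphism $\rhobar_{F,p} \simeq \rhobar_{E,p}$ can exist, and the putative solution $(a,b,c)$ does not exist either. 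This proves the theorem.

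The principal obstacle is the case-by-case discriminant bookkeeping: for each family in $S_0 \setminus T$, one must produce a suitable $E$, verify its Kodaira type at each of $2, 3, q$, compute $v_\ell(\Delta_E)$ there, and check that the resulting integer $\alpha_q$ is genuinely a non-square in $\Z$. The excluded set $T$ accounts precisely for the degenerate configurations: when $q \in S_7$ or $q = 3d^2+16$, the candidate discriminants force $v_2(\Delta_E)$ and $v_q(\Delta_E)$ into even--even parity, every local symplectic comparison becomes automatically symplectic, and no nontrivial condition on $p$ emerges.
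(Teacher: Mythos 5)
Your overall strategy matches the paper's: reduce to $q \in S_0 \setminus T$ via Theorem~\ref{main1}, observe that $\alpha = 1$ forces the mod-$p$ isomorphism $\phi : F[p] \to E[p]$ to be symplectic (Proposition~\ref{multiplicativeAtq}), and then use local symplectic criteria at $2$ and $3$ to impose quadratic-residue conditions $\Legendre{\kappa}{p} = -1$ that contradict symplecticity for a positive density of $p$. However, there are two genuine gaps.

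The central one is that you treat each family $S_i$ as if it supplies a single candidate curve $E$, whereas a given prime $q$ may lie in several of the $S_i$ simultaneously (and may even have several inequivalent representations within one $S_i$, cf.\ the example $q = 10369$ in the paper), producing several pairwise non-isogenous candidates $E$. Level lowering only tells you that $f$ corresponds to \emph{some} such $E$, so to eliminate a given exponent $p$ you must impose $\Legendre{\kappa_i}{p} = -1$ for \emph{every} candidate at once, and these conditions can be mutually incompatible: if three of the $\kappa_i$ have square product, the three symbols cannot all equal $-1$. Your argument only shows that each individual candidate is excluded for a density-$1/2$ set of $p$; it does not show these sets have nonempty (let alone positive-density) intersection. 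This compatibility verification is where almost all of the paper's work lies: one needs Proposition~\ref{intersect} on the intersections $S_i \cap S_j$, together with the resolution of several exponential Diophantine equations (e.g.\ $d_6^2 = 4\cdot 3^{b_2} - 3^{b_6} + 16$ for $S_5 \cap S_6$, and congruence/recurrence arguments for $S_8 \cap S_2$), to show that for every $q \in S_0\setminus T$ the $\kappa_i$ can be chosen all negative, or all $\equiv 2 \pmod 4$, or a compatible mixture.

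Secondly, your local criterion is stated only for primes of multiplicative reduction, but at $\ell = 2$ and $\ell = 3$ most of the relevant curves (and the Frey curve) have additive reduction. The parity of $v_\ell(\Delta_E)$ does not by itself determine the symplectic type there; one needs the potentially-good-reduction criteria of \cite{Fr}, \cite{FrKr1} and \cite{FrKr2} (involving, e.g., the existence of $3$-torsion over $\QQ_2$ when $e(E,2)=3$, or the invariants $r,t$ when $e(E,3)=4$), or the Kraus--Oesterl\'e criterion applied after a quadratic twist in the potentially multiplicative cases. Finally, a minor omission: the sporadic isogeny classes of Corollary~\ref{wonder} with small conductor (arising for $q \in \{5,13,17,73\}$) need a separate finite check, which the paper carries out in Theorem~\ref{rump}.
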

We note that, defining $\pi_T(x)$ to be the counting function for primes in $T$, it is not difficult to show that 
$$
\pi_T (x) \ll \frac{\sqrt{x}}{ \log x},
$$
whereby standard heuristics suggest that the set $T$ is genuinely of smaller order than $S_0$ (though, in point of fact, it would be remarkably difficult to prove that either set is even infinite).

As a sampling of more explicit work along these lines, we mention the following results for certain primes in $S_0$  (see also Theorem~\ref{rump} in Section~\ref{applications}). 
\begin{theorem} \label{main3} 
Suppose that  $q=2^a 3^b-1$ is prime, where $a \geq 5$ and $b \geq 1$ integers. If $p > q^{2q}$ is prime and there exists a positive integer $\alpha$ and coprime, nonzero integers $A, B$ and $C$ satisfying equation $\eqref{ABC}$, then
$$
\left( \frac{\alpha}{p} \right) =  \left( \frac{4-a}{p} \right) =  \left( \frac{-6b}{p} \right).
$$
\end{theorem}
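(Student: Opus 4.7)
The plan is to combine the modular method with the symplectic criteria of Kraus-Oesterl\'e and Freitas-Kraus. Given a putative solution $(A, B, C, \alpha)$ to \eqref{ABC} under the hypotheses of the theorem, attach the standard Frey-Hellegouarch elliptic curve $E/\Q$ to the triple $(A,B,C)$. Modularity together with Ribet's level-lowering theorem produces an isomorphism of mod-$p$ Galois representations
$$
\bar{\rho}_{E,p} \isom \bar{\rho}_{E',p},
$$
where $E'/\Q$ has conductor in $\{18q, 36q, 72q\}$. By Theorem \ref{main1}, the prime $q$ lies in $S_0$, so we may choose $E'$ with nontrivial rational $2$-torsion and with $\Delta(E') = T^2$ or $-3T^2$. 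For the specific family $q = 2^a 3^b - 1$ with $a\ge 5$ and $b\ge 1$, the classification of $S_0$ given in the proposition above pins down the isomorphism class of $E'$, up to a quadratic twist by a discriminant supported on $\{2,3\}$; an explicit minimal Weierstrass model can then be written down and Tate's algorithm applied to extract its $2$-, $3$- and $q$-adic discriminant valuations as specific functions of $a$ and $b$. On the Frey side, $v_q(\Delta^{\min}(E)) = 2\alpha$, while $v_2(\Delta^{\min}(E))$ and $v_3(\Delta^{\min}(E))$ are small integers determined by a case analysis on the residues of $A$, $B$ and $C$.

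The symplectic criterion asserts that any isomorphism $\bar{\rho}_{E,p} \isom \bar{\rho}_{E',p}$ is either symplectic or anti-symplectic with respect to the Weil pairings on the $p$-torsion of $E$ and of $E'$, and that this sign is a single global invariant; consequently, it must agree with the local symplectic sign at every prime of multiplicative (or potentially multiplicative) reduction. At a prime $\ell$ where both curves have multiplicative reduction with $p$ coprime to the corresponding valuations, the local sign is
$$
\Legendre{v_\ell(\Delta^{\min}(E)) \cdot v_\ell(\Delta^{\min}(E'))}{p};
$$
at primes of additive potentially multiplicative reduction, an analogous formula of Freitas-Kraus applies once a suitable twisting factor is accounted for. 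The hypothesis $p > q^{2q}$ ensures that $p$ exceeds all of the relevant valuations, so the criteria are available at $\ell \in \{q, 2, 3\}$. Equating the three expressions for the global symplectic sign produces three Legendre symbols that must coincide; a direct computation shows that these evaluate to $\Legendre{\alpha}{p}$, $\Legendre{4-a}{p}$ and $\Legendre{-6b}{p}$ respectively, from which the theorem follows.

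The main obstacle is careful bookkeeping with Tate's algorithm. One has to identify the minimal model of $E'$, together with the correct quadratic twist making the conductor lie in $\{18q, 36q, 72q\}$, and then compute precise $2$- and $3$-adic valuations of $\Delta^{\min}(E')$ and of $\Delta^{\min}(E)$ in each subcase arising from the parities and $3$-divisibilities of $A$, $B$ and $C$. A secondary but technically critical issue is that, at whichever of the primes $2$ or $3$ the Frey curve has additive reduction, one must confirm that the reduction is potentially multiplicative and invoke the correct Freitas-Kraus refinement of the symplectic criterion with its twisting factor; getting this right is precisely what delivers the specific Legendre symbols $\Legendre{4-a}{p}$ and $\Legendre{-6b}{p}$, rather than unknown scalar multiples thereof.
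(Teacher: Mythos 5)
Your overall architecture is the paper's: level-lower the Frey--Hellegouarch curve, invoke Theorem~\ref{main1} to force $q\in S_0$, and then play the local symplectic criteria at $\ell=q,2,3$ against one another. The three local computations you sketch are the correct ones and yield the correct square classes: at $q$ both curves are multiplicative with $\nu_q(\Delta(E'))=2$ and $\nu_q(\Delta(F))\equiv 2\alpha\pmod p$, giving $\Legendre{\alpha}{p}$; at $2$ the curve $18q.1.a1$ has $\nu_2(\Delta)=2a-8$ against $\nu_2(\Delta(F))\equiv-8\pmod p$, giving $\Legendre{4-a}{p}$; at $3$ both curves are potentially multiplicative and, after the common quadratic twist, the valuations $2b$ and $-3$ give $\Legendre{-6b}{p}$. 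These are exactly the relations recorded in \eqref{sample}.

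The genuine gap is the assertion that the classification of $S_0$ ``pins down the isomorphism class of $E'$.'' It does not, by itself. A priori the prime $q=2^a3^b-1$ could admit additional representations placing it in $S_2,\dots,S_8$ as well, in which case the level-lowered newform could correspond to a curve from a different family ($18q.3.a$, $18q.4.a$, $36q.3.a$, etc.) whose discriminant valuations involve \emph{different} parameters, producing different Legendre symbols; the stated equalities would then not follow. The paper closes this hole with Proposition~\ref{intersect} --- whose proof is not formal bookkeeping but rests on Tijdeman--Wang, integral points on Mordell curves and lower bounds for $|2^x-3^y|$ --- together with an elementary congruence: the finite sets $S_1\cap S_j$ for $j\in\{2,3,5,7,8\}$ contain no prime of the form $2^a3^b-1$ with $a\ge5$, $b\ge1$, while $q\equiv 2\pmod 3$ excludes $S_4$ and $S_6$, and $q\ge 191$ excludes the sporadic small-conductor isogeny classes. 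Only after this disambiguation is the isogeny class forced to be $18q.1.a$ (with its unique representation of $q$), and only then do your three local computations combine to give the theorem. Your proposal needs this step supplied; the rest is correct in outline.
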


\begin{theorem} \label{main4}
 If $p$ is prime  with $p \equiv 13, 19$ or $23 \mod{24}$, then there are no coprime, nonzero integers $A, B$ and $C$ satisfying
 \begin{equation} \label{quiet}
 A^3 + B^3 = 5 C^p.
 \end{equation}
 \end{theorem}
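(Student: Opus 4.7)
The plan is to apply the Frey-Hellegouarch machinery set up in \cite{BLM} to the equation $A^3+B^3=5C^p$, combined with the symplectic criteria exploited throughout this paper to prove results such as Theorem~\ref{main3}. Since $5 \in S_0$ (as $5 = 2^2 \cdot 3^0 + 1 \in S_1$), the methods behind Theorem~\ref{main1} alone do not suffice: there exist genuine elliptic curves of conductor in $\{90, 180, 360\}$ with rational $2$-torsion and discriminant a square or $-3$ times a square that ``mimic'' the Galois representation $\bar\rho_{E,p}$ of the Frey curve. These mimicking curves must be enumerated (using Cremona's tables \cite{Cre}) and then each one dealt with by a symplectic argument.

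In detail, I would first, to each putative coprime solution $(A,B,C)$ of \eqref{quiet}, attach the appropriate Frey-Hellegouarch curve $E = E_{A,B}$ as in \cite{BLM}, splitting into the standard cases according to the $2$-adic and $3$-adic behaviour of $A+B$ and $A^2-AB+B^2$. Modularity and Ribet's level lowering, together with the hypothesis that $p \ge 5$, then imply $\bar\rho_{E,p} \cong \bar\rho_{f,p}$ for some weight $2$ newform $f$ of trivial character at level in $\{90, 180, 360\}$. Because $5 \in S_0$, after eliminating all newforms not congruent to a rational elliptic curve (by the usual trace comparison bounds, and noting $p > $ any bound produced thereby when $p \equiv 13, 19, 23 \bmod{24}$ is sufficiently large; the small $p$ are finitely many and each can be treated separately, in particular via \cite{Dah}, \cite{Br00}, \cite{ChSi}, and direct computation), one is left with $\bar\rho_{E,p} \cong \bar\rho_{E',p}$ for $E'$ ranging over an explicit finite list of mimicking curves at those three levels.

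For each such $E'$, I would apply the symplectic criterion of Kraus-Oesterl\'e/Halberstadt-Kraus (in the form used to prove Theorem~\ref{main3}): if $\bar\rho_{E,p} \cong \bar\rho_{E',p}$, then at each prime $\ell$ of multiplicative reduction common to $E$ and $E'$ the isomorphism is symplectic precisely when $\bigl(\tfrac{v_\ell(\Delta_E)\,v_\ell(\Delta_{E'})}{p}\bigr)=1$, and one has a further global constraint forcing the product of these local signs to equal $+1$. Using the explicit factorizations of $\Delta_E$ (in terms of the known $2$- and $3$-adic valuations coming from the case split) and of $\Delta_{E'}$ (a square or $-3$ times a square, by the defining property of $S_0$), each mimicking curve yields an equation of Legendre symbols in $p$ involving only $\bigl(\tfrac{-1}{p}\bigr)$, $\bigl(\tfrac{2}{p}\bigr)$ and $\bigl(\tfrac{3}{p}\bigr)$, i.e.\ a condition on $p \bmod 24$.

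The main obstacle, and where I expect the real work to lie, is the bookkeeping across all the Frey-curve cases and all the mimicking curves: one must check that in every branch the resulting system of Legendre symbol constraints is incompatible with each of $p \equiv 13, 19, 23 \bmod{24}$, while being compatible (as it must be) with $p$ in the remaining residue classes. I would organize this as a single table indexed by (Frey case) $\times$ (mimicking $E'$), read off from it the permitted residues of $p \bmod 24$ in each box, and verify that $\{13, 19, 23\}$ does not appear in any row. The residues $1, 5, 7, 11, 17 \bmod{24}$ will, of course, survive (since we cannot hope to eliminate the case $A=2, B=-1, C=1$ and its translates in general), which is a useful sanity check that the Legendre symbol calculations have been performed correctly.
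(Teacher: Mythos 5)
Your overall plan --- level-lower to weight $2$ newforms of level $90$, $180$ or $360$, reduce to a finite list of elliptic curves $E'$ with $\rhobar_{F,p}\simeq\rhobar_{E',p}$, and then play local symplectic conditions off against one another to forbid certain residues of $p\bmod{24}$ --- is exactly the paper's. But as written it has two genuine gaps. The first concerns the symplectic toolkit: you only state the Kraus--Oesterl\'e criterion at primes of \emph{common multiplicative} reduction, with the condition on $\left(\frac{v_\ell(\Delta_E)v_\ell(\Delta_{E'})}{p}\right)$. The three classes that actually survive to the symplectic stage are $90c$, $360a$ and $360d$; the conductor-$360$ curves are additive at both $2$ and $3$ (with \emph{potentially good} reduction at $2$), so the only common multiplicative prime is $\ell=5$, and a single local condition fixes the symplectic type of $\phi$ but can never produce a contradiction. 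The paper's proof essentially requires the criteria at the additive primes: at $\ell=2$ the potentially-good-reduction criterion with inertia order $24$ (Theorem~4 of \cite{Fr}) to get $\left(\frac{2}{p}\right)=1$ for $360a2$, and at $\ell=3$ the twisted (potentially multiplicative) form of Kraus--Oesterl\'e. Even for $90c$, one of the two conditions $\left(\frac{-1}{p}\right)=1$ and $\left(\frac{-2}{p}\right)=1$ comes from $\ell=3$ after a quadratic twist, and both are needed: $\left(\frac{-1}{p}\right)=1$ alone does not exclude $p\equiv 13\pmod{24}$.

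The second gap is the reduction to the ``mimicking'' list for \emph{all} $p\equiv 13,19,23\pmod{24}$, not just large ones. The citations \cite{Br00}, \cite{ChSi}, \cite{Dah} concern $A^3+B^3=C^n$, not the twisted equation, and no direct computation disposes of, say, $p=37$ or $p=43$ for $A^3+B^3=5C^p$; nor can you invoke Theorem~\ref{main1}, whose proof for $q=5$ only bites for $p$ beyond the bound coming from (\ref{final}). The paper instead treats all nine isogeny classes at levels $90,180,360$ head-on and uniformly in $p\ge 17$: five of them ($90a,90b,180a,360b,360c$) have $a_7(E')=2$, whereas the Frey curve, having minimal discriminant $-3T^2$ and rational $2$-torsion, satisfies $a_7(F)\equiv 8\equiv 0\pmod{4}$, so that $2\equiv 0,\pm4,\pm8\pmod{p}$, forcing $p\le 5$; and $360e$ is eliminated by comparing inertia orders, $e(E',3)=2$ against $e(F,3)=4$ (or potentially multiplicative reduction) for the Frey curve. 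These steps are what make the statement hold for every admissible $p$ rather than for sufficiently large $p$. (A small factual slip besides: $2^3+(-1)^3=7$, not $5$; there is no trivial solution to \eqref{quiet} with $C=\pm1$, and the residues that survive do so because of the genuine mimicking classes $90c$, $360a$, $360d$.)
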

 
 These results all follow from applying the modular method, together with a somewhat elaborate blend of  techniques from algebraic and analytic number theory, and Diophantine approximation, with a variety of {\it symplectic criteria}  (see Section \ref{apply}) to equation (\ref{ABC}). This last approach was developed initially by 
Halberstadt and Kraus \cite{HK2002} and has recently been refined and generalized
in the work of the third author \cite{Fr}, jointly 
with Naskr{\k e}cki and Stoll \cite{FNS23n} and with Kraus \cite{FrKr1}.
One of the justifications for the current paper is to provide a number of examples which, on some level,  utilize the full power of these recently developed symplectic tools.  

As a final comment, we note that it should be possible to apply techniques based upon quadratic reciprocity, as in, say, work of Chen and Siksek \cite{ChSi}, to say something further about equation (\ref{ABC}) for certain primes $q$ and certain exponents. We will not undertake this here.

\smallskip
  
 The outline of this paper is as follows. In Section \ref{Frey}, we restate a number of results from \cite{BLM} pertaining to Frey-Hellegouarch curves that we require in the sequel.
 In Section \ref{Class}, we characterize isomorphism classes of elliptic curves over $\mathbb{Q}$ with nontrivial rational $2$-torsion and conductor $18q, 36q$ or $72q$, for $q$ prime. 
 Section \ref{proofie} contains the proof of Theorem \ref{main1}. 
In  Section \ref{freebie}, we make a number of remarks about the sets $S_i$ comprising $S_0$. 
In Section \ref{apply}, we apply several symplectic criteria to the Frey-Hellegouarch curve and
the elliptic curves corresponding to the primes in $S_0$.
In Section \ref{applications}, we prove Theorems \ref{shark99}, \ref{main3} and \ref{main4} (and somewhat more besides). 
Section \ref{appendix} is an appendix containing information on the invariants $c_4(E)$ and $c_6(E)$  for elliptic curves of conductor $18q, 36q$ and $72q$, corresponding to the primes in $S$.

\section{Frey-Hellegouarch curves} \label{Frey}

Let us suppose that $q \geq 5$ is prime, $\alpha$ is a positive integer, and that we have a solution to equation (\ref{ABC}) in coprime nonzero integers $A, B$ and $C$ where, without loss of generality, $AC$ is even and $B \equiv (-1)^{C+1} \mod{4}$.
Following Darmon and Granville \cite{DG}, we associate to such a solution a {\it Frey-Hellegouarch elliptic curve} $F = F^{(i)}_{A,B}$ of the shape
$$
F^{(0)}_{A,B} \; \; : \; \; y^2+xy=x^3 + \frac{3( B-A) +2}{8} x^2 + \frac{3(A+B)^2}{64} x + \frac{ 9 (B-A)(A+B)^2}{512}
$$
or
$$
 F^{(1)}_{A,B} \; \; : \; \;  y^2=x^3+3ABx+B^3-A^3,
$$
depending on whether $C$ is even or odd, respectively. For future reference, we note that these are minimal models.
 The standard invariants $c_4(F), c_6(F)$ and
 $\Delta(F)$ attached to $F=F^{(i)}_{A,B}$ are 
 \begin{equation}\label{eqn:invariants}
 \begin{cases}
  c_4(F)=-2^{4i} 3^2AB,\\
  c_6(F)=2^{6i-1} 3^3(A^3-B^3),\\
  \Delta(F)=-2^{12i-8} 3^3 q^{2\alpha} C^{2p}.
 \end{cases}
\end{equation}

Let $\mathcal{R}$ denote the product of the primes $\ell$ satisfying $ \ell \mid C$ and $\ell \nmid 6q$.
A standard application of Tate's algorithm leads to the following.
\begin{lemma}
\label{lem:conductor-of-frey-curve-2}
If $F=F^{(i)}_{A,B}$, then the  conductor $N_F$ satisfies
 $$
 N_{F}=\begin{cases}
               18 \, q\, \mathcal{R}&\text{ if $C$ even, }B\equiv -1\text{ {\rm (mod} } 4), \; 
               \text{or}\\
               36 \, q\, \mathcal{R}&\text{ if $C$ odd, } v_2(A)\geq 2\text{ and }B\equiv1\text{ {\rm (mod} } 4), \; \text{or}\\
               72 \, q \, \mathcal{R}&\text{ if $C$ odd, } v_2(A)=1\text{ and }B\equiv1\text{ {\rm (mod} } 4).
              \end{cases}
$$
In particular, $F$ has multiplicative reduction at the prime $q$. 
\end{lemma}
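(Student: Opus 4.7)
The plan is to apply Tate's algorithm at each prime of bad reduction of $F$. From the formula \eqref{eqn:invariants}, the bad primes are contained in $\{2,3,q\}\cup\{\ell : \ell\mid C\}$, so it suffices to compute $v_\ell(N_F)$ at each of these. The final assertion that $F$ has multiplicative reduction at $q$ will drop out of the analysis.

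I would handle the easy primes first. The equation $A^3+B^3 = q^\alpha C^p$ together with $\gcd(A,B)=1$ forces $\gcd(AB, qC)=1$: if $\ell\mid A$ and $\ell\in\{q\}\cup\{\ell':\ell'\mid C\}$, then $\ell\mid B^3$, hence $\ell\mid B$, contradicting $\gcd(A,B)=1$. Consequently, at $\ell=q$ and at every prime $\ell\mid C$ with $\ell\nmid 6q$ we have $v_\ell(c_4(F))=0$ while $v_\ell(\Delta(F))>0$, so that $F$ is of Kodaira type $\mathrm{I}_n$ (multiplicative reduction) with local conductor exponent $1$. The primes $\ell\mid C$ with $\ell\nmid 6q$ then collectively yield the square-free factor $\mathcal{R}$, while $q$ yields the factor $q$; in particular $F$ has multiplicative reduction at $q$, as claimed. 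At $\ell=3$, the same coprimality argument gives $3\nmid AB$, so $v_3(c_4)=2$ and $v_3(\Delta)=3+2p\,v_3(C)\geq 3$; Tate's algorithm then yields additive reduction at $3$ of local conductor exponent $2$, accounting for the factor of $9$ present in all three cases.

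The only nontrivial case distinction arises at $\ell=2$. If $C$ is even, then $A$ and $B$ are both odd (else $\gcd(A,B)$ would share a factor of $2$). Writing $A^3+B^3 = (A+B)(A^2-AB+B^2)$ and noting that $A^2-AB+B^2\equiv 1\pmod 2$, we deduce $v_2(A+B) = p\,v_2(C)\geq p\geq 5$; combined with $B\equiv -1\pmod 4$ this furnishes enough $2$-adic input to verify that $F^{(0)}_{A,B}$ has integral minimal Weierstrass coefficients, and a routine Tate run gives multiplicative reduction at $2$, so that $v_2(N_F)=1$ and $N_F=18q\mathcal{R}$. If $C$ is odd, then exactly one of $A,B$ is even; the convention ``$AC$ even'' picks this to be $A$ and we use $F^{(1)}_{A,B}$. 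Since $v_2(\Delta(F^{(1)}))=4<12$, this model is automatically minimal at $2$, and $v_2(c_4)=4+v_2(A)$. Tate's algorithm, with the standard translations used to normalise the singular point on the special fibre, distinguishes the sub-cases: $v_2(A)\geq 2$ gives additive reduction of conductor exponent $2$ (hence $N_F=36q\mathcal{R}$), while $v_2(A)=1$ gives conductor exponent $3$ (hence $N_F=72q\mathcal{R}$).

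The hardest part is the $v_2(A)=1$ sub-case: one must follow Tate's algorithm through its wild ramification steps at $\ell=2$ and verify that no admissible change of variables lowers $v_2(\Delta)$ further, so that the conductor exponent is genuinely $3$ rather than $2$. This $2$-adic bookkeeping is standard and has been carried out in detail in \cite{BLM}; I would cite that computation rather than reproduce it here.
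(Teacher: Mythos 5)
Your approach is the same as the paper's: the paper offers no argument for this lemma beyond the sentence ``A standard application of Tate's algorithm leads to the following,'' the underlying $2$- and $3$-adic bookkeeping being the computation in \cite{BLM} and \cite{Mu} that you defer to. Your treatment of the primes $\ell=q$ and $\ell\mid\mathcal{R}$, and the case split at $\ell=2$ according to $v_2(C)$ and $v_2(A)$, is correct and is exactly the intended argument.

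One intermediate step is wrong, though repairably so. Coprimality of $A$ and $B$ does \emph{not} give $3\nmid AB$: your argument only excludes common prime factors of $AB$ with $qC$, and $3$ need not divide $qC$ (indeed $3\mid A$ is perfectly possible, in which case $3\nmid A^3+B^3$). This is why the paper's own tables in Section \ref{apply} record $\nu_3(c_4(F))=2+\nu_3(AB)$ and $\nu_3(c_6(F))=3+\nu_3(A^3-B^3)$ rather than the fixed values $2$ and $3$. The conclusion survives: if $3\mid AB$ then $3\nmid qC$, so $\nu_3(\Delta)=3$, $\nu_3(c_6)=3$ and $\nu_3(c_4)\geq 3$, which by the tables of \cite{Kraus1990} still gives additive, tame reduction with local conductor exponent $2$. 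But this subcase has to be checked separately; it is not subsumed by the claim $\nu_3(c_4)=2$, and as written your $\ell=3$ paragraph only covers the case $3\nmid AB$.
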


Arguing as in \cite{BLM} and \cite{Kr} we find that, for $p \geq 17$, there necessarily exists a newform $f\in
S_2^+(N_{F}/\mathcal{R})$ (the space of weight $2$ cuspidal newforms for the congruence subgroup $\Gamma_0(N_{F}/\mathcal{R})$), whose Taylor expansion is
$$
f= q + \sum_{m \ge 2}a_m(f)q^m \text{ where } q=e^{2\pi i t},
$$
and a place $\fp$ of $\Qbar$ lying above
$p$, such that 
\begin{equation} \label{iso1}
\rhobar_{F,p} \sim \rhobar_{f,\fp},
\end{equation}
where $\rhobar_{F,p}$ and $\rhobar_{f,\fp}$ denote, respectively, the
mod~$\fp$ Galois representations attached to $F$ and $f$.
In particular,
for all prime numbers $\ell \nmid pN_{F}$, we have
$$
a_{\ell}(f) \equiv a_{\ell}(F) \pmod{\fp},
$$
where $a_{\ell}(F)$ denotes the trace of Frobenius of $F$ at the prime $\ell$. Therefore,
\begin{align}
\label{eqn:n-big-bound}
 p \mid {\rm Norm}_{K_{f}/\QQ}(a_{\ell}(f)-a_{\ell}(F)),
\end{align}
for $K_f$ the field of definition of the coefficients of $f$. 
Furthermore, the level lowering condition implies 
\begin{equation} \label{bad}
 p \mid {\rm Norm}_{K_{f}/\QQ}(a_{\ell}(f) \pm (\ell+1)),
 \end{equation}
for each prime $\ell \neq p$ dividing $\mathcal{R}$. 

From the arguments of \cite{BLM}, under the assumption that $p > q^{2p}$, we may conclude that the form $f$ has rational integer Fourier coefficients
$a_m(f)$ for all $m\ge1$, whereby $f$ corresponds to an isogeny class of
elliptic curves over $\QQ$ with conductor $N=18q$, $36q$ or
$72q$, and further that the corresponding elliptic curve $E$ has a rational $2$-torsion point. This, in essence, is Theorem \ref{BLM1}. To complete the proof of Theorem~\ref{main1}, it remains to eliminate the possibility of the Frey-Hellegouarch curve $F$ ``arising mod~$p$'' from an elliptic curve $E$ that fails to be isogenous to a curve with discriminant of the shape $T^2$ or $-3T^2$. To do this, we first require a very precise characterization of elliptic curves of conductor $N=18q$, $36q$ or
$72q$, with nontrivial rational $2$-torsion.

\section{Classification results for primes of conductor $18q$, $36q$ and $72q$} \label{Class}

In this section, we will state theorems that provide an explicit classification for primes $q$ of the corresponding isomorphism classes of elliptic curves $E/\mathbb{Q}$ with conductor $18q$, $36q$ or $72q$ and nontrivial rational $2$-torsion. 
The following results are mild sharpenings and simplifications of special cases of Theorems 3.13, 3.14 and 3.15  of Mulholland \cite{Mu} (see also Theorems 4.0.8, 4.0.10 and 4.0.12 of \cite{Bruni}), where analogous results are derived more generally for elliptic curves with nontrivial rational $2$-torsion and conductor of the shape $2^\alpha 3^\beta q^\gamma$.

\begin{theorem} \label{theorem18q}
If $q > 3$ is prime, then there exists an elliptic curve $E/\mathbb{Q}$ of conductor $18q$ with at least one rational $2$-torsion point precisely when either $E$ is isogenous to one of (in Cremona's notation) 
$$
\begin{array}{c}
90a, 90b, 90c, 126a, 126b, 198b, 198c, 198d, 198e, 306a,  \\
 306b, 306c, 342c, 342f, 414a, 1314a \mbox{ or } 1314f, \\
\end{array} 
$$
or $E$ is $\mathbb{Q}$-isomorphic to
$$
\tilde{E} \; \; : \; \; y^2 +  xy  = x^3 + a_2 x^2 + a_4 x + a_6
$$ 
and at least one of the following occurs:
\begin{enumerate}
\item There exist integers $a \geq 5$ and $b \geq 0$ such that
$$
q=2^{a} 3^b + (-1)^\delta, \; \; \mbox{ for } \; \; \delta \in \{ 0, 1 \},
$$
and $\tilde{E}$ is one of the following :
$$
\begin{array}{|c|c|c|} \hline
\mbox{curve} & a_2 & a_4  \\ \hline
18q.1.a1 & (-1)^{\delta+1} 2^{a-1} 3^{b+1} - 1 & 2^{a-4} 3^{b+2} q   \\
18q.1.a2 & (-1)^{\delta+1} 2^{a-1} 3^{b+1} - 1 &  -2^{a-2} 3^{b+2} q    \\
18q.1.a3 & (-1)^{\delta+1}2^{a-3} 3^{b+1} - 1 & 2^{2a-8} 3^{2b+2}   \\
18q.1.a4 & (-1)^{\delta}2^{a-2} 3^{b+1} - 1 & (-1)^{\delta}2^{a-2} 3^{b+2} \\ \hline
\mbox{curve} & a_6 & \Delta \\ \hline
18q.1.a1 & 0 & 2^{2a-8} 3^{2b+6} q^2 \\
18q.1.a2 & (-1)^{\delta} 2^{a-4} 3^{b+3} \left( 2q- (-1)^{\delta} \right) q & 2^{a-4} 3^{b+6} q \\
18q.1.a3 & 0 & (-1)^{\delta} 2^{4a-16} 3^{4b+6} q \\
18q.1.a4 & 2^{a-4} 3^{b+3} ( q - 2 (-1)^{\delta})  & (-1)^{\delta+1} 2^{a-4} 3^{b+6} q^4 \\
\hline
\end{array}
$$

\item There exists an odd  integer $a \geq 5$ such that 
$$
q=\frac{2^{a} + 1}{3}
$$
and $\tilde{E}$ is one of the following :
 $$
\begin{array}{|c|c|c|} \hline
\mbox{curve} & a_2 & a_4  \\ \hline
18q.2.a1 &-3 \cdot 2^{a-1}  -1 &  2^{a-4} 3^{3} q   \\
18q.2.a2 &  -3 \cdot 2^{a-1}  -1 &   -2^{a-2} 3^{3} q     \\
18q.2.a3 & -3 \cdot 2^{a-3}  -1 & 2^{2a-8} 3^{2}   \\
18q.2.a4 & 3 \cdot 2^{a-2}  -1 & 2^{a-2} 3^{2} \\ \hline
\mbox{curve} & a_6 & \Delta \\ \hline
18q.2.a1 & 0 & 2^{2a-8} 3^{8} q^2 \\
18q.2.a2 & 2^{a-4} 3^{4} \left( 2^{a+1} + 1 \right) q & 2^{a-4} 3^{7} q \\
18q.2.a3 & 0 &  2^{4a-16} 3^{7} q \\
18q.2.a4 & 2^{a-4} 3^{3} \left( 2^{a} - 1 \right)   & -2^{a-4} 3^{10} q^4 \\
\hline
\end{array}
$$

\item There exist integers $a \geq 5$ and  $b \geq 1$, and $\delta_1, \delta_2 \in \{ 0, 1 \}$ such that
$$
q = (-1)^{\delta_1} 2^a + (-1)^{\delta_2}  3^b
$$
and, writing $\delta = b+\delta_1+\delta_2+1$, $\tilde{E}$ is one of the following :
$$
\begin{array}{|c|c|c|} \hline
\mbox{curve} & a_2 & a_4  \\ \hline
18q.3.a1 & - \frac{1}{4} + (-1)^{\delta} \left( 3 \cdot 2^{a-2}  + (-1)^{\delta_1} \frac{3q}{4}) \right) &  (-1)^{\delta_1} 2^{a-4} 3^{2} q   \\
18q.3.a2 &  - \frac{1}{4} + (-1)^{\delta} \left( 3 \cdot 2^{a-2}  + (-1)^{\delta_1} \frac{3q}{4}) \right) &   (-1)^{\delta_1+1}2^{a-2} 3^{2} q    \\
18q.3.a3 & - \frac{1}{4} +  (-1)^{\delta}  3 \cdot 2^{a-3}  - (-1)^b \frac{3^{b+1}}{4}  & 2^{2a-8} 3^{2}  \\
18q.3.a4 & - \frac{1}{4} + (-1)^{\delta+1} \left( 3 \cdot 2^{a-1}  + (-1)^{\delta_1+1} \frac{3q}{4}) \right) & (-1)^{\delta_1+\delta_2}2^{a-2} 3^{b+2} \\ \hline
\mbox{curve} & a_6 & \Delta \\ \hline
18q.3.a1 & 0 & 2^{2a-8} 3^{2b+6} q^2  \\
18q.3.a2 & (-1)^{\delta+\delta_1+1} 3^3 \cdot 2^{a-4} (2^{a+1}+  (-1)^{\delta_1+\delta_2} 3^b) q & 2^{a-4} 3^{4b+6} q  \\
18q.3.a3 & 0 &  (-1)^{\delta_2} 2^{4a-16} 3^{b+6} q \\
18q.3.a4 & (-1)^{\delta} 3^{b+3} \cdot 2^{a-4} (3^b+ (-1)^{1+\delta_1+\delta_2} 2^a)   & (-1)^{b+\delta} 2^{a-4} 3^{b+6} q^4  \\
\hline
\end{array}
$$

\item There exist integers $a \geq 7$, $b \geq 0$, $\delta_1, \delta_2 \in \{ 0, 1 \}$ and $d \equiv 1 \pmod{4}$, such that $(\delta_1, \delta_2) \neq (1,1)$ and, if we have $a \equiv b \equiv 0 \pmod{2}$, then $(\delta_1,\delta_2)=(0,0)$, with
$$
q = (-1)^{\delta_1} d^2+ (-1)^{\delta_2} 2^a 3^b,
$$
and $\tilde{E}$ is one of the following :
$$
\begin{array}{|c|c|c|} \hline
\mbox{curve} & a_2 & a_4  \\ \hline
18q.4.a1 & - \left( \frac{3d+1}{4} \right) & (-1)^{\delta_1+\delta_2+1} 2^{a-6} 3^{b+2}   \\
18q.4.a2 & - \left( \frac{3d+1}{4} \right) &  (-1)^{\delta_1+\delta_2} 2^{a-4} 3^{b+2}    \\ \hline
\mbox{curve} & a_6 & \Delta \\ \hline
18q.4.a1 & 0 &  (-1)^{\delta_1} 2^{2a-12} 3^{2b+6} q  \\
18q.4.a2 & (-1)^{\delta_1+\delta_2+1} 2^{a-6} 3^{b+3} d & (-1)^{\delta_1+\delta_2+1} 2^{a-6} 3^{b+6} q^2  \\
\hline
\end{array}
$$

\item There exist integers $a \geq 7$, $\delta_1, \delta_2 \in \{ 0, 1 \}$ and $d \equiv 1 \pmod{4}$, such that $(\delta_1, \delta_2) \neq (1,1)$,
$$
q = (-1)^{\delta_1} 3 d^2+ (-1)^{\delta_2} 2^a,
$$
and $\tilde{E}$ is one of the following :
$$
\begin{array}{|c|c|c|} \hline
\mbox{curve} & a_2 & a_4  \\ \hline
18q.5.a1 &  - \left( \frac{3d+1}{4} \right) &  (-1)^{\delta_1+\delta_2+1} 2^{a-6} 3   \\
18q.5.a2 &   - \left( \frac{3d+1}{4} \right) &  (-1)^{\delta_1+\delta_2} 2^{a-4} 3    \\
18q.5.b1 & \frac{9d-1}{4} &  (-1)^{\delta_1+\delta_2+1} 2^{a-6} 3^3  \\
18q.5.b2 & \frac{9d-1}{4} & (-1)^{\delta_1+\delta_2} 2^{a-4} 3^3 \\ \hline
\mbox{curve} & a_6 & \Delta \\ \hline
18q.5.a1 & 0 &  (-1)^{\delta_1} 2^{2a-12} 3^{3} q \\
18q.5.a2 &  (-1)^{\delta_1+\delta_2+1} 2^{a-6} 3^{2} d &  (-1)^{\delta_1+\delta_2+1} 2^{a-6} 3^{3} q^2  \\
18q.5.b1 & 0 & (-1)^{\delta_1} 2^{2a-12} 3^{9} q \\
18q.5.b2 & (-1)^{\delta_1+\delta_2} 2^{a-6} 3^{5} d  & (-1)^{\delta_1+\delta_2+1} 2^{a-6} 3^{9} q^2  \\
\hline
\end{array}
$$

\item There exist integers $a \geq 7$, $b \geq 1$ and $d \equiv 1 \pmod{4}$, with $a$ odd, such that 
$$
q =\frac{d^2+ 2^a}{3^b},
$$
and $\tilde{E}$ is one of the following :
\begin{small}
$$
\begin{array}{|c|c|c|c|c|} \hline
\mbox{curve} & a_2 & a_4  & a_6 & \Delta \\ \hline
18q.6.a1 & - \left( \frac{3d+1}{4} \right) & - 2^{a-6} 3^{2} & 0  &  2^{2a-12} 3^{b+6} q \\
18q.6.a2 & - \left( \frac{3d+1}{4} \right) & 2^{a-4} 3^{2} & -2^{a-6} 3^{3} d &  -2^{a-6} 3^{2b+6} q^2  \\
\hline
\end{array}
$$
\end{small}

\end{enumerate}
\end{theorem}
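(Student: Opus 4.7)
The plan is to parameterize all elliptic curves $E/\mathbb{Q}$ admitting a rational $2$-torsion point by translating that point to $(0,0)$, so that, after base change over $\mathbb{Z}[1/2]$, we may write $E$ as $Y^2 = X(X^2 + AX + B)$ with $A, B \in \mathbb{Z}$, giving
$$
\Delta(E) = 16 B^2 (A^2 - 4 B), \qquad c_4(E) = 16 (A^2 - 3 B), \qquad j(E) = \frac{256(A^2 - 3B)^3}{B^2(A^2 - 4B)}.
$$
The hypothesis $N(E) = 18q$ with $q > 3$ prime forces: multiplicative reduction at $q$ (hence $q \nmid c_4$ and $v_q(\Delta) \geq 1$); additive reduction at $3$ of Kodaira type compatible with conductor exponent $2$, pinning down the admissible pairs $(v_3(c_4), v_3(\Delta))$ via Papadopoulos' tables; and admissible $2$-adic behavior compatible with $v_2(N) = 1$, which, because the minimal model at $2$ is of Cremona shape $y^2 + xy = x^3 + a_2 x^2 + a_4 x + a_6$, forces a congruence $A \equiv 1 \pmod{2}$ and specific constraints on $v_2(B)$.

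These constraints force $B^2(A^2 - 4 B) = \pm 2^{s_2} 3^{s_3} q^{s_q}$ for specific exponents $(s_2, s_3, s_q)$ dictated by each compatible reduction profile. Distributing the prime powers between $B$ and $A^2 - 4B$, and using the $c_4$-condition $q \nmid A^2 - 3B$, the problem reduces in every branch to one of two types of auxiliary Diophantine equation. The first type is a pure $S$-unit equation over $\{2, 3, q\}$ of the shape $2^a 3^b - \eps q = \pm 1$, $2^a - \eps 3^b = \pm q$, or $2^a + 1 = 3 q$, whose solutions produce the infinite families (1), (2), (3). The second type is a representation equation of Ramanujan--Nagell type: $d^2 + \eps \, 2^a 3^b = q$, $3 d^2 + \eps \, 2^a = q$, or $d^2 + 2^a = 3^b q$ (with $d$ a translate of $A$), producing (4), (5), (6). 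The sporadic isogeny classes $90a, \dots, 1314f$ are precisely the curves arising from the finitely many exceptional solutions with exponents below the thresholds $a \geq 5$ or $a \geq 7$; one identifies them by direct search in \cite{Cre} over the effective range provided by the Diophantine bounds.

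The hard part will be the delicate sub-case analysis: each of the six families splits further according to sign choices $(\delta, \delta_1, \delta_2)$ and congruences on $d$ modulo $4$, and one must verify that each proposed model has conductor \emph{exactly} $18q$ rather than $36q$ or $72q$, which requires running Tate's algorithm at $2$ using the precise congruence class of $A$ modulo $8$ or $16$. Once the parameterization is fixed, translating back to the Cremona-normalized form $y^2 + xy = x^3 + a_2 x^2 + a_4 x + a_6$ via $X = x + \mu$, $Y = y + (x + \nu)/2$ for suitable $\mu, \nu \in \frac{1}{2}\mathbb{Z}$, and computing $(a_2, a_4, a_6, \Delta)$, is routine; the converse direction, that the tabulated models do have conductor $18q$, reduces to a uniform Tate's-algorithm verification. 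The overall structure follows \cite{Mu}, Theorem 3.13, and \cite{Bruni}, Theorem 4.0.8, which treat the more general conductor $2^\alpha 3^\beta q^\gamma$; the simplifications here arise from specializing to $(\alpha, \beta, \gamma) = (1, 2, 1)$ and absorbing sign ambiguities into the $(\delta_i)$ notation.
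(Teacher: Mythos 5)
Your proposal is correct and follows essentially the same route as the paper, which gives no independent proof but derives the statement as a special case of Theorems 3.13--3.15 of Mulholland's thesis \cite{Mu} (and Theorems 4.0.8--4.0.12 of \cite{Bruni}); those results are obtained exactly as you describe, by writing a curve with rational $2$-torsion as $Y^2=X(X^2+AX+B)$, constraining $(v_2,v_3,v_q)$ of $\Delta=16B^2(A^2-4B)$ and $c_4=16(A^2-3B)$ via the conductor, and reducing to $S$-unit and Ramanujan--Nagell-type equations whose solution families yield cases (1)--(6) and whose small-parameter exceptions yield the sporadic isogeny classes. The only caveat is that the full execution of the sub-case analysis (the Tate's-algorithm verifications at $2$ and $3$ for each sign/congruence branch) is lengthy, but you have correctly identified where that work lives and what it must establish.
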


\vskip2ex

\begin{theorem} \label{theorem36q}
If $q > 3$ is prime, then there exists an elliptic curve $E/\mathbb{Q}$ of conductor $36q$ with at least one rational $2$-torsion point precisely when either $E$ is isogenous to one of (in Cremona's notation) 
$$
180a, 252a \mbox{ or } 468d,
$$
or $E$ is $\mathbb{Q}$-isomorphic to
$$
\tilde{E} \; \; : \; \; y^2   = x^3 + a_2 x^2 + a_4 x 
$$ 
and at least one of the following occurs:
\begin{enumerate}
\item There exist  integers $u$ and $v$ with $u \equiv v \equiv 1 \pmod{4}$ and $u^2-3v^2=-2$, such that
$$
q = \frac{3v^2-1}{2}
$$
and $\tilde{E}$ is one of the following :
$$
\begin{array}{|c|c|c|c|} \hline
\mbox{curve} & a_2 & a_4 & \Delta \\ \hline
36q.1.a1 & -3uv & 3q^2 &  -2^{4} 3^{3} q^4 \\
36q.1.a2 & 6 uv & -3 &  2^{8} 3^{3} q^2 \\ \hline
36q.1.b1 & 9 uv & 3^3  q^2 &  -2^{4} 3^{9} q^4 \\
36q.1.b2 & -18 uv & -3^3 &  2^{8} 3^{9} q^2 \\
\hline
\end{array}
$$

\item There exists an  integer $d \equiv 1 \mod{8}$, such that
$$
q = \frac{3d^2+1}{4}
$$
and $\tilde{E}$ is one of the following :
$$
\begin{array}{|c|c|c|c|} \hline
\mbox{curve} & a_2 & a_4 & \Delta \\ \hline
36q.2.a1 & -3d & 3q & -2^{4} 3^{3} q^2 \\
36q.2.a2 & 6 d & -3  & 2^{8} 3^{3} q \\ \hline
36q.2.b1 & 9 d & 3^3 q & -2^{4} 3^{9} q^2 \\
36q.2.b2 & -18 d & -3^3 &   2^{8} 3^{9} q \\
\hline
\end{array}
$$

\item  There exists  an odd integer $b \geq 1$ and an integer $d \equiv 1 \pmod{4}$ such that
$$
q = \frac{d^2+3^b}{4} \equiv 3 \pmod{4}
$$
and $\tilde{E}$ is one of the following :
$$
\begin{array}{|c|c|c|c|} \hline
\mbox{curve} & a_2 & a_4 & \Delta  \\ \hline
36q.3.a1 & -3d & 3^2 q & -2^{4} 3^{b+6} q^2  \\
36q.3.a2 & 6d & -3^{b+2}  & 2^{8} 3^{2b+6} q \\
\hline
\end{array}
$$

\item There exist integers $b \geq 1$, $\delta \in \{ 0, 1 \}$  and $d \equiv 1 \pmod{4}$, such that $b$ is odd, $d \equiv 1 \mod{4}$,
$$
q = (-1)^{\delta} \left( d^2- 4 \cdot 3^{b} \right)
$$
and $\tilde{E}$ is one of the following :
$$
\begin{array}{|c|c|c|c|} \hline
\mbox{curve} & a_2 & a_4 & \Delta  \\ \hline
36q.4.a1 & -3d & 3^{b+2} &  (-1)^{\delta} 2^{4} 3^{2b+6} q \\
36q.4.a2 & 6d & (-1)^{\delta} 3^2 q & 2^{8} 3^{b+6} q^2 \\
\hline
\end{array}
$$

\item There exist integers $b \geq 1$, $\delta \in \{ 0, 1 \}$, $n \geq 7$  and $d \equiv 1 \pmod{4}$, such that $b$ is odd, every prime factor 
of $n$ is at least $7$,
$$
q^n = (-1)^{\delta} \left( d^2 - 4 \cdot 3^{b} \right)
$$
and $\tilde{E}$ is one of the following :
$$
\begin{array}{|c|c|c|c|} \hline
\mbox{curve} & a_2 & a_4 & \Delta \\ \hline
36q.5.a1 & -3d & 3^{b+2} &  (-1)^{\delta} 2^{4} 3^{2b+6} q^n \\
36q.5.a2 & 6d & (-1)^{\delta} 3^2 q^n &  2^{8} 3^{b+6} q^{2n} \\
\hline
\end{array}
$$

\item There exists an  integer $d \equiv 1 \mod{4}$, such that
$$
q = 3 d^2-4
$$
and $\tilde{E}$ is one of the following :
$$
\begin{array}{|c|c|c|c|} \hline
\mbox{curve} & a_2 & a_4 & \Delta  \\ \hline
36q.6.a1 & -3d & 3 & 2^{4} 3^{3} q \\
36q.6.a2 & 6 d & 3 q  & 2^{8} 3^{3} q^2 \\ \hline
36q.6.b1 & 9 d & 3^3 & 2^{4} 3^{9} q  \\
36q.6.b2 & -18 d & 3^3 q & 2^{8} 3^{9} q^2 \\
\hline
\end{array}
$$

\item There exists an  integer $d \equiv 1 \mod{4}$, and an even integer $b \geq 0$ such that
$$
q = d^2+ 4 \cdot 3^b,
$$
and $\tilde{E}$ is one of the following :
$$
\begin{array}{|c|c|c|c|} \hline
\mbox{curve} & a_2 & a_4 & \Delta  \\ \hline
36q.7.a1 & -3d & -3^{b+2} & 2^4 3^{2b+6} q \\
36q.7.a2 & 6d & 3^2 q  & -2^8 3^{b+6} q^2 \\
\hline
\end{array}
$$

\end{enumerate}
\end{theorem}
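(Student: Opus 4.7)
The plan is to adapt the general classification scheme of Mulholland \cite{Mu} and Bruni \cite{Bruni} for elliptic curves over $\QQ$ of conductor $2^\alpha 3^\beta q^\gamma$ with nontrivial rational $2$-torsion, specialized to the case $(\alpha,\beta,\gamma)=(2,2,1)$ that yields conductor $36q$. Since $E(\QQ)$ contains a point of order two, after a translation we may write $E$ in the minimal Weierstrass form
$$
E\;:\; y^2 = x^3 + a_2 x^2 + a_4 x,
$$
with $\Delta(E) = 16 a_4^2(a_2^2 - 4a_4)$ and $c_4(E) = 16(a_2^2 - 3a_4)$. The $2$-isogenous curve $E' = E/\langle(0,0)\rangle$ has model $y^2 = x^3 - 2a_2 x^2 + (a_2^2 - 4a_4)x$ and the same conductor as $E$; consequently both $a_4$ and $D := a_2^2 - 4a_4$ are supported only on the primes $\{2,3,q\}$.

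Writing $a_4 = \varepsilon_1\cdot 2^{s_1}3^{t_1}q^{u_1}$ and $D = \varepsilon_2\cdot 2^{s_2}3^{t_2}q^{u_2}$ with $\varepsilon_i\in\{\pm 1\}$, the identity $a_2^2 = D + 4a_4$ produces, for each admissible tuple of valuations, an $S$-unit-type equation in $\{2,3,q\}$. For each such tuple, Tate's algorithm at the primes $2$, $3$ and $q$ determines when the conductor equals exactly $36q$: at $q$ we require multiplicative reduction (so $v_q(c_4)=0$), at $3$ we require conductor exponent $2$, and at $2$ we require conductor exponent $2$. Imposing these conditions simultaneously, together with global $2$-minimality of the chosen model, narrows the admissible tuples to a small list. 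Reducing modulo the usual Weierstrass isomorphisms and grouping each curve with its $2$-isogenous partner then produces the seven parametric families $36q.1,\ldots,36q.7$ in the statement, each corresponding to writing $q$ (or, in case $36q.5$, a prime power $q^n$) as a primitive combination of a small power of $2$, $3$ and a square, entirely parallel to the patterns appearing in Theorem \ref{theorem18q}.

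The sporadic isogeny classes $180a$, $252a$ and $468d$, of conductors $36\cdot 5$, $36\cdot 7$ and $36\cdot 13$, arise as the finitely many low-height solutions to these $S$-unit equations that fall outside the generic parametric families (for instance because one of the parameters $d, b$ is too small for the model to satisfy the genericity conditions imposed on the families), and are read off directly from Cremona's tables \cite{Cre}. The main technical obstacle lies in the case analysis at $2$: since the translation placing the $2$-torsion at the origin can disturb $2$-adic minimality, one must separate the subcases $v_2(a_4)\in\{0,1,2,\ge 3\}$ and within each track the interplay between $2$-minimality, the Galois action on the other $2$-torsion points (controlled by whether $a_2^2-4a_4$ is a square), and the requirement $v_2(N_E)=2$. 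Once this bookkeeping is complete, the explicit formulas for $a_2, a_4, a_6$ and $\Delta$ in each family in the statement follow by direct substitution and routine verification using the tables of $c_4$ and $c_6$ collected in Section \ref{appendix}.
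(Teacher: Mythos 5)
Your outline is essentially the route the paper itself takes: the authors do not prove this classification from scratch but derive it as a special case of the general classification of elliptic curves with rational $2$-torsion and conductor $2^\alpha 3^\beta q^\gamma$ in Mulholland's and Bruni's theses, which proceeds exactly as you describe — reduce to a model $y^2=x^3+a_2x^2+a_4x$, use minimality and the $2$-isogenous curve to force $a_4$ and $a_2^2-4a_4$ to be $\{2,3,q\}$-units, translate the conductor conditions via Tate's algorithm into the resulting ternary $S$-unit equations, and read off the parametric families together with the finitely many sporadic isogeny classes. The only caution is that your assertion that $a_4$ and $a_2^2-4a_4$ are supported on $\{2,3,q\}$ requires the chosen model to be minimal away from $2$ (which is arrangeable, and which you implicitly assume), and that the substantial case analysis at $\ell=2$ and $\ell=3$ is deferred rather than carried out — but that matches the level of detail the paper itself supplies.
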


\begin{theorem} \label{theorem72q}
 If $q > 3$ is prime, then there exists an elliptic curve $E/\mathbb{Q}$ of conductor $72q$ with at least one rational $2$-torsion point precisely when either $E$ is isogenous to one of (in Cremona's notation) 
$$
\begin{array}{c}
360a, 360b, 360c, 360d, 936a, 936d, 936f,  2088b,    \\
2088h, 3384a, 5256e, 13896f  \mbox{ or } 83016c, \\
\end{array} 
$$
or $E$ is $\mathbb{Q}$-isomorphic to
$$
\tilde{E} \; \; : \; \; y^2   = x^3 + a_2 x^2 + a_4 x 
$$ 
and at least one of the following occurs:
\begin{enumerate}
\item There exists an odd  integer $b \geq 1$ such that
$$
q = \frac{3^b+1}{4}
$$
and $\tilde{E}$ is one of the following :
$$
\begin{array}{|c|c|c|c|} \hline
\mbox{curve} & a_2 & a_4 & \Delta\\ \hline
72q.1.a1 & 24 q-3 & 2^2 3^{b+2} q &  2^{8} 3^{2b+6} q^2 \\
72q.1.a2 & -48 q + 6 &  3^2 & 2^{10} 3^{b+6} q \\
72q.1.a3 & 24q+6 & 3^{2b+2}  & 2^{10} 3^{4b+6}  q \\
72q.1.a4 & 6q-3 & 3^2 q^2 & -2^{4} 3^{b+6} q^4 \\
\hline
\end{array}
$$

\item There exist integers $a \in \{ 2, 3 \}$, $b \geq 0$ and $\delta \in \{ 0, 1 \}$ such that
$$
q=2^a \cdot 3^b + (-1)^\delta
$$
and $\tilde{E}$ is one of the following :
$$
\begin{array}{|c|c|c|c|} \hline
\mbox{curve} & a_2 & a_4 & \Delta \\ \hline
72q.2.a1 & (-1)^{\delta+1} 2^{a+1} 3^{b+1} - 3 & 2^a 3^{b+2} q & 2^{2a+4} 3^{2b+6} q^2  \\
72q.2.a2 & (-1)^{\delta} 2^{a+2} 3^{b+1} +6 & 3^2 & 2^{a+8} 3^{b+6} q \\
72q.2.a3 & (-1)^{\delta+1} 2^{a-1}  3^{b+1} -3 & 2^{2a-4} 3^{2b+2} & (-1)^\delta 2^{4a-4} 3^{4b+6} q  \\
72q.2.a4 &(-1)^{\delta+1} 2^{a+1} 3^{b+1} +6 & 3^2 q^2 & (-1)^{\delta+1} 2^{a+8} 3^{b+6} q^4  \\
\hline
\end{array}
$$

\item  There exist integers $a \in \{ 2, 3 \}$, $b \geq 0$ and $\delta \in \{ 0, 1 \}$   such that 
$$
q=3^b  + (-1)^\delta 2^a
$$
and $\tilde{E}$ is one of the following :
$$
\begin{array}{|c|c|c|c|} \hline
\mbox{curve} & a_2 & a_4 & \Delta \\ \hline
72q.3.a1 & (-1)^{b+1} 3 \left( 3^b - (-1)^\delta 2^a \right)  & (-1)^{\delta+1} 2^a 3^{b+2}  & 2^{2a+4} 3^{2b+6} q^2 \\
72q.3.a2 & (-1)^{b} 6 \left( 3^b - (-1)^\delta 2^a \right)  & 3^2 q^2 &  (-1)^{\delta+1} 2^{a+8} 3^{b+6} q^4 \\
72q.3.a3 & (-1)^{b} 6 \left( 3^b + (-1)^\delta 2^{a+1} \right) & 3^{2b+2} & (-1)^{\delta} 2^{a+8} 3^{4b+6} q \\
72q.3.a4 &  (-1)^{b+1} 3 \left( 3^b + (-1)^\delta 2^{a-1} \right)  & 2^{2a-4} 3^{2} &  2^{4a-4} 3^{b+6} q \\
\hline
\end{array}
$$

\item  There exists  an integer $d \equiv 1 \pmod{4}$ such that 
$$
q=3 d^2  + 4
$$
and $\tilde{E}$ is one of the following :
$$
\begin{array}{|c|c|c|c|} \hline
\mbox{curve} & a_2 & a_4 & \Delta \\ \hline
72q.4.a1 & 3 d & -  3  & 2^{4} 3^{3} q \\
72q.4.a2 &  - 6 d &  3 q  & - 2^{8} 3^{3} q^2 \\ \hline
72q.4.b1 & - 9 d &   - 3^{3} & 2^{4} 3^{9} q  \\
72q.4.b2 & 18 d & 3^3 q  & - 2^{8} 3^{9} q^2 \\
\hline
\end{array}
$$

\item  There exist integers $a \in \{ 4, 5 \}$,  $\delta \in \{ 0, 1 \}$ and  $d \equiv 1 \pmod{4}$, such that 
$$
q=3 d^2  + (-1)^\delta 2^a
$$
and $\tilde{E}$ is one of the following :
$$
\begin{array}{|c|c|c|c|} \hline
\mbox{curve} & a_2 & a_4 & \Delta \\ \hline
72q.5.a1 & - 3 d & (-1)^{\delta+1} 2^{a-2} 3  & 2^{2a} 3^{3} q \\
72q.5.a2 &  6 d &  3 q & (-1)^{\delta+1} 2^{a+6} 3^{3} q^2 \\ \hline
72q.5.b1 &  9 d &   (-1)^{\delta+1} 2^{a-2}3^{3} & 2^{2a} 3^{9} q \\
72q.5.b2 & - 18 d & 3^3 q & (-1)^{\delta+1} 2^{a+6} 3^{9} q^2 \\
\hline
\end{array}
$$

\item  There exist an integer $d \equiv 5 \mod{8}$ such that  
$$
q=\frac{3 d^2+1}{4}
$$
and $\tilde{E}$ is one of the following :
$$
\begin{array}{|c|c|c|c|} \hline
\mbox{curve} & a_2 & a_4 & \Delta \\ \hline
72q.6.a1 &  3 d & 3 q &  -2^{4} 3^{3} q^2 \\
72q.6.a2 &  -6 d &  -3 & 2^{8} 3^{3} q \\ \hline
72q.6.b1 & -9 d &  3^3 q &  -2^{4} 3^{9} q^2 \\
72q.6.b2 & 18 d & -3^3 & 2^{8} 3^{9} q \\
\hline
\end{array}
$$

\item  There exist odd  integers $b \geq 1$ and $d \equiv 1 \mod{4}$ such that  
$$
q=\frac{d^2+3^b}{4} \equiv 1 \mod{4}
$$
and $\tilde{E}$ is one of the following :
$$
\begin{array}{|c|c|c|c|} \hline
\mbox{curve} & a_2 & a_4 & \Delta \\ \hline
72q.7.a1 &  3 d & 3^2 q  & -2^{4} 3^{b+6} q^2 \\
72q.7.a2 &  -6 d &  -3^{b+2} & 2^{8} 3^{2b+6} q \\ 
\hline
\end{array}
$$

\item  There exist odd  integers $b \geq 1$ and $d \equiv 1 \pmod{4}$ such that  
$$
q=d^2 + 4 \cdot 3^b
$$
and $\tilde{E}$ is one of the following :
$$
\begin{array}{|c|c|c|c|} \hline
\mbox{curve} & a_2 & a_4 & \Delta \\ \hline
72q.8.a1 &  3 d & -3^{b+2} &  2^{4} 3^{2b+6} q \\
72q.8.a2 &  -6 d &  3^2 q & -2^{8} 3^{b+6} q^2  \\ 
\hline
\end{array}
$$

\item There exist integers $a \in \{ 4, 5 \}$, $b \geq 0$, $\delta_1, \delta_2 \in \{ 0, 1 \}$ and $d \equiv 1 \pmod{4}$, such that $(\delta_1, \delta_2) \neq (1,1)$,
$b$ is odd if $a=4$ and $\delta_1 \neq \delta_2$,
$$
q = (-1)^{\delta_1} d^2+ (-1)^{\delta_2} 2^a 3^b,
$$
and $\tilde{E}$ is one of the following :
$$
\begin{array}{|c|c|c|c|} \hline
\mbox{curve} & a_2 & a_4& \Delta \\ \hline
72q.9.a1 &  -3 d & (-1)^{\delta_1+\delta_2+1} 2^{a-2} 3^{b+2}  & (-1)^{\delta_1} 2^{2a} 3^{2b+6} q \\
72q.9.a2 &  6 d &  (-1)^{\delta_1} 3^2 q & (-1)^{\delta_1+\delta_2+1} 2^{a+6} 3^{b+6} q^2 \\ 
\hline
\end{array}
$$

\item There exist integers $a \in \{ 4, 5 \}$, $b \geq 0$, $\delta \in \{ 0, 1 \}$,  $d \equiv 1 \mod{4}$ and $n$, such that  
the least prime divisor of $n$ is at least $7$, $b$ is odd if $a=4$,
$$
q^n = (-1)^{\delta} \left( d^2 - 2^a 3^b \right),
$$
and $\tilde{E}$ is one of the following : 
$$
\begin{array}{|c|c|c|c|} \hline
\mbox{curve} & a_2 & a_4 & \Delta \\ \hline
72q.10.a1 &  -3 d &  2^{a-2} 3^{b+2} &  (-1)^{\delta} 2^{2a} 3^{2b+6} q^n \\
72q.10.a2 &  6 d &  (-1)^{\delta} 3^2 q^n & 2^{a+6} 3^{b+6} q^{2n}  \\ 
\hline
\end{array}
$$

\item  There exist  integers $b \geq 0$ and $d \equiv 1 \mod{4}$ such that  
$$
q=\frac{d^2+32}{3^b}
$$
and $\tilde{E}$ is one of the following :
$$
\begin{array}{|c|c|c|c|} \hline
\mbox{curve} & a_2 & a_4 & \Delta \\ \hline
72q.11.a1 & - 3 d & -2^{3} 3^2 & 2^{10} 3^{b+6} q  \\
72q.11.a2 &  6 d &  3^{b+2} q & -2^{11} 3^{2b+6} q^2 \\ 
\hline
\end{array}
$$

\item  There exist  integers $b \geq 0$, $d \equiv 1 \mod{4}$ and $n$, such that  
the least prime divisor of $n$ is at least $7$, 
$$
q^n=\frac{d^2+32}{3^b}
$$
and $\tilde{E}$ is one of the following :
$$
\begin{array}{|c|c|c|c|} \hline
\mbox{curve} & a_2 & a_4 & \Delta \\ \hline
72q.12.a1 & - 3 d & -2^{3} 3^2 &  2^{10} 3^{b+6} q^n \\
72q.12.a2 &  6 d &  3^{b+2} q^n & -2^{11} 3^{2b+6} q^{2n} \\ 
\hline
\end{array}
$$

\end{enumerate}
\end{theorem}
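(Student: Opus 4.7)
The plan is to adapt the classification of Mulholland \cite{Mu} and Bruni \cite{Bruni}, who treat elliptic curves over $\mathbb{Q}$ with nontrivial rational $2$-torsion and conductor $2^\alpha 3^\beta q^\gamma$; Theorem \ref{theorem72q} is essentially the specialization to $(\alpha,\beta,\gamma)=(3,2,1)$, presented in simplified tabular form. Since $E/\mathbb{Q}$ has a rational $2$-torsion point, after translation I may choose a minimal Weierstrass model of the shape $y^2 = x^3 + a_2 x^2 + a_4 x$ with $a_2, a_4 \in \mathbb{Z}$, for which
\[
c_4(E) = 16(a_2^2 - 3 a_4), \quad c_6(E) = -64\, a_2(2 a_2^2 - 9 a_4), \quad \Delta(E) = 16\, a_4^2(a_2^2 - 4 a_4).
\]
Because $E$ carries a $2$-isogeny $\phi:E\to E'$ with $E': y^2 = x^3 - 2 a_2 x^2 + (a_2^2 - 4 a_4) x$, I will systematically obtain the companion rows in each table by pushing forward along $\phi$ and $\hat{\phi}$, so that each case records the full $2$-isogeny class.

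The hypothesis $N(E) = 72q$ translates, through Tate's algorithm, into a finite list of allowed triples $\bigl(v_\ell(c_4), v_\ell(c_6), v_\ell(\Delta)\bigr)$ at $\ell \in \{2, 3\}$, multiplicative reduction at $q$ (so $v_q(a_2^2-3a_4) = 0$ and $v_q(a_4^2(a_2^2-4a_4)) \geq 1$), and good reduction at every other prime (so both $a_4$ and $a_2^2 - 4a_4$ are $\{2,3,q\}$-units, up to sign). For each admissible reduction pattern, I may write $a_4 = \pm 2^{e_1} 3^{f_1} q^{g_1}$ and $a_2^2 - 4 a_4 = \pm 2^{e_2} 3^{f_2} q^{g_2}$, with the exponents running over the narrow ranges forced by Tate, and substituting back yields a Diophantine equation of the shape
\[
a_2^2 \;=\; \epsilon_2 \cdot 2^{e_2} 3^{f_2} q^{g_2} \;+\; 4 \epsilon_1 \cdot 2^{e_1} 3^{f_1} q^{g_1}, \qquad \epsilon_1, \epsilon_2 \in \{\pm 1\}.
\]
Each such equation is either of Pellian type or a very restricted Thue--Mahler equation, solvable by elementary factorization in $\mathbb{Z}$, $\mathbb{Z}[i]$, $\mathbb{Z}[\sqrt{-2}]$ or $\mathbb{Z}[\sqrt{-3}]$ depending on the signs. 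These solutions produce exactly the parametric families labelled $(1)$--$(12)$. The residue constraints (such as $d \equiv 1\pmod{4}$ or $d\equiv 5\pmod 8$) and the parity restrictions on $a, b$ arise from enforcing minimality of the model and from matching $c_6$ against a $2$-adic or $3$-adic square-class; these are precisely the refinements over Mulholland's more general statement.

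The appearance of an exponent $n$ (whose smallest prime factor is at least $7$) in cases $(5)$, $(10)$, and $(12)$ reflects isolating the sub-equations $q^n = |d^2 - 4\cdot 3^b|$ and $q^n = (d^2+32)/3^b$: values $n \in \{2,3,5\}$ would force the $\{2,3,q\}$-support hypothesis to hold for a proper prime power of $q$, yielding curves of conductor strictly smaller than $72q$, and these are therefore excluded. The sporadic isogeny classes $360a,\ldots,83016c$ correspond to edge cases in which $q$ is small enough that a parametric family degenerates or several families coincide; these are enumerated directly from Cremona's tables \cite{Cre} by running through all elliptic curves of conductor $72q$ with $q \leq $ a modest bound and checking which fail to be $\mathbb{Q}$-isomorphic to a curve in $(1)$--$(12)$. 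The main obstacle is bookkeeping: the case tree branches heavily according to $v_2$ and $v_3$ of $a_2$ and $a_4$, and one must track minimality and the action of $\phi$ and $\hat\phi$ throughout to avoid double-counting across the twelve cases and to ensure completeness of the resulting list.
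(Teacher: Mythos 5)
Your outline reproduces the strategy behind the paper's own treatment: the paper does not prove Theorem \ref{theorem72q} from scratch, but presents it as a mild sharpening of special cases of Theorems 3.13--3.15 of Mulholland \cite{Mu} (and Theorems 4.0.8, 4.0.10, 4.0.12 of \cite{Bruni}), whose proofs proceed essentially as you describe: put the $2$-torsion point at the origin, read off the admissible valuations of $c_4$, $c_6$, $\Delta$ at $2$ and $3$ from Tate's algorithm, impose that $a_4$ and $a_2^2-4a_4$ are supported on $\{2,3,q\}$, solve the resulting equations $a_2^2=\epsilon_2\,2^{e_2}3^{f_2}q^{g_2}+4\epsilon_1\,2^{e_1}3^{f_1}q^{g_1}$, and track the $2$-isogeny class throughout. (A minor slip: for $y^2=x^3+a_2x^2+a_4x$ one has $c_6=-32\,a_2(2a_2^2-9a_4)$, not $-64\,a_2(2a_2^2-9a_4)$.)

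One piece of your justification is wrong, and it concerns precisely the delicate part of the classification. You assert that in the families carrying an exponent $n$ the values of $n$ with a prime factor in $\{2,3,5\}$ are excluded because they would ``yield curves of conductor strictly smaller than $72q$.'' That cannot be the reason: at a prime of multiplicative reduction the conductor exponent is $1$ regardless of $v_q(\Delta)$, so a solution of $q^n=|d^2-2^a3^b|$ with, say, $n=2$ would still produce a curve of conductor exactly $72q$. The restriction on $n$ is instead obtained by actually solving the sub-equations for small $n$: for $n$ even one factors $d^2-q^{2m}=\pm 2^a3^b$ (or works in $\mathbb{Z}[i]$ in the other sign case), which forces the solution into one of the other listed families or the sporadic list; for $n$ divisible by $3$ or $5$ one is reduced to Mordell-type and generalized Fermat equations that must be treated separately, as in \cite{Mu}. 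This is genuinely nontrivial content --- note that even the authors cannot rule out families (10) and (12) for the remaining $n$, only restrict them --- so it cannot be dispatched by a conductor count. (Also, in Theorem \ref{theorem72q} the exponent $n$ occurs in cases (10) and (12) only; case (5) has no such parameter.)
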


We should mention that while we are currently unable to rule out the existence of primes in families (10) and (12) in Theorem \ref{theorem72q}, we strongly suspect that there are no such primes. Further, we must confess that our notation can admit a certain amount of ambiguity as, for a given prime $q$, we could have multiple representations of $q$ giving rise to non-isogenous curves with the same labels. By way of example, 
\begin{equation} \label{ex99}
q=10369 = 1^2 + 2^7 \cdot 3^4 = 65^2 + 2^{11} \cdot 3
\end{equation}
and the curves denoted $18q.4.a$ corresponding to these two representations are non-isogenous. For $q \in S_i$ for a fixed $1 \leq i \leq 8$, however, it is straightforward to show that there are at most finitely many such distinct such representations -- for all except $i=2$, the parametrization monotonically increasing in the variables $a,b, v$ and $d$. For $q \in S_2$, the same is easily seen to be  true except, possibly, for the cases with $q = | 2^a - 3^b|$. In this last situation, via a result of Tijdeman \cite{Tij}, we have
$$
\left| 2^a - 3^b \right| \geq 3^b b^{-\kappa},
$$
for some effectively computable absolute positive constant $\kappa$, at least provided $b > 2$, and hence, again, $q$ has only finitely many such representations (at most $3$, in fact, by a result of the first author \cite{Be2003}).

Combining  Theorems \ref{theorem18q}, \ref{theorem36q} and \ref{theorem72q}, together with the definition of $S_0$, yields the following.
\begin{corollary} \label{wonder}
An elliptic curve $E/\mathbb{Q}$ is in $S_0$ precisely if $E$ is either in one of the isogeny classes (in Cremona's notation) 
$$
90c, 126b, 252a, 306c, 342f, 360a, 360d, 936d \mbox{ or } 5256e,
$$
or $E$ is isogenous to one of
$$
\begin{array}{c}
18q.1.a, 18q.2.a, 18q.3.a, 18q.4.a \; (\mbox{with } \delta_1=\delta_2=0, a \mbox{ even}, b \mbox{ odd}), 18.q.5.a \mbox{ and }  \\
18q.5.b \; (\mbox{with, in both cases, } \delta_1=\delta_2=0 \mbox{ and } a \mbox{ even}), 36q.1.a, 36q.1.b, 36q.2.a,   \\
36q.2.b, 36q.3.a,  72q.1.a, 72q.2.a, 72q.3.a, 72q.4.a, 72q.4.b, 72.q.5.a  \mbox{ and } 72q.5.b \\
(\mbox{with, in both cases, } \delta=0 \mbox{ and } a =4), 72q.6.a, 72q.6.b, 72q.7.a, 72q.8.a  \\
 \mbox{ and }  72q.9.a \; (\mbox{with } \delta_1=\delta_2=0, a =4, \, b \mbox{ odd}).\\
\end{array}
$$
\end{corollary}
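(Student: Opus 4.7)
The plan is to prove the corollary by a systematic case analysis over the elliptic curves classified in Theorems \ref{theorem18q}, \ref{theorem36q}, and \ref{theorem72q}. By definition, $q \in S_0$ precisely when there exists an $E/\mathbb{Q}$ with $N(E) \in \{18q, 36q, 72q\}$, nontrivial rational $2$-torsion, and $\Delta(E) = T^2$ or $\Delta(E) = -3T^2$ for some $T \in \mathbb{Z}$, so the task reduces to identifying, curve-by-curve in those classification tables, exactly which entries satisfy this discriminant condition.

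First I would dispose of the finitely many ``sporadic'' isogeny classes listed explicitly at the top of each of the three theorems (the seventeen classes of conductor $18q$, the three of conductor $36q$, and the thirteen of conductor $72q$). For each of these, a direct lookup in Cremona's tables yields the factorization of the minimal discriminant, and the short finite check retains precisely those isogeny classes which contain a curve with $\Delta = T^2$ or $\Delta = -3T^2$. This step is expected to output exactly the finite list $90c, 126b, 252a, 306c, 342f, 360a, 360d, 936d, 5256e$ named in the corollary.

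The rest of the proof treats each parametric subfamily $18q.i.*$, $36q.i.*$, and $72q.i.*$ in turn. For each, the tables give $\Delta$ in the explicit shape $\pm 2^{\alpha}3^{\beta}q^{\gamma}$ with $\alpha, \beta, \gamma$ linear functions of the defining parameters $a,b,u,v,d,\delta_1,\delta_2$. Since $q$ is a prime coprime to $6$, the condition $\Delta = T^2$ is equivalent to $\Delta > 0$ with $\alpha, \beta, \gamma$ all even, while $\Delta = -3T^2$ is equivalent to $\Delta < 0$ with $\alpha$ and $\gamma$ even and $\beta$ odd. Applying these parity and sign constraints row-by-row immediately selects the qualifying curves and the needed restrictions on the parameters: for example, $18q.1.a1$ has $\Delta = 2^{2a-8}3^{2b+6}q^2$, which is automatically a square, while $18q.4.a2$ has $\Delta = (-1)^{\delta_1+\delta_2+1}2^{a-6}3^{b+6}q^2$, contributing to $S_0$ via the $-3T^2$ case exactly when $\delta_1 = \delta_2 = 0$, $a$ is even, and $b$ is odd. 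Carrying this out uniformly for every subfamily reproduces exactly the list of families enumerated in the statement.

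The only real subtlety lies in reconciling the parity and sign restrictions forced by the discriminant test with the standing hypotheses of each family in the classification theorems, and in making sure that, when several different families might a priori give rise to the same prime $q$ (as illustrated by the example $q=10369$ in \eqref{ex99}), all contributing curves are properly accounted for. Concretely, the restriction ``$a$ even'' coming from the $S_0$ condition must be combined with the hypothesis $a \geq 7$ in family $18q.5$ to produce $a \geq 8$, matching the hybrid description ``$a \in \{2,4\}$ or $a \geq 8$ even'' in the definition of $S_5$, and analogous reconciliations occur for the remaining families. The main obstacle is therefore not conceptual but organizational: once this bookkeeping is carried out carefully and the lists assembled, the corollary follows by direct comparison with Theorems \ref{theorem18q}, \ref{theorem36q}, and \ref{theorem72q}.
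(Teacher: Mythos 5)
Your proposal is correct and is essentially the argument the paper intends: the paper offers no explicit proof beyond ``combining Theorems \ref{theorem18q}, \ref{theorem36q} and \ref{theorem72q} with the definition of $S_0$,'' and your reduction of the discriminant condition to sign and parity constraints on the exponents in $\Delta = \pm 2^{\alpha}3^{\beta}q^{\gamma}$ (all even and $\Delta>0$ for $T^2$; $\alpha,\gamma$ even, $\beta$ odd and $\Delta<0$ for $-3T^2$), applied row by row together with a table lookup for the sporadic isogeny classes, is exactly the required verification.
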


\section{Finishing the proof of Theorem \ref{main1}} \label{proofie}

From the classification results of the preceding section, we need to show only that, for suitably large primes $p$, equation (\ref{ABC}) has no solutions in coprime nonzero integers, with Frey-Hellegouarch curve $F$
corresponding (in the sense of Section \ref{Frey}) to an elliptic curve $E$ in one of the isogeny classes 
\begin{equation} \label{gander}
\begin{array}{c}
90a, 90b, 126a, 180a, 198b, 198c, 198d, 198e, 306a, 306b,  \\
 342b,  342c, 360b, 360c, 414a, 468d,  936a, 936f, 1314a, \\
 1314f, 2088b,  2088h, 3384a, 13896f  \mbox{ or } 83016c, \\
\end{array}
\end{equation}
or
\begin{equation} \label{goose}
\begin{array}{c}  
18q.4.a \; (\mbox{with } \delta_1 \neq \delta_2, \mbox{ or } \delta_1=\delta_2=0 \mbox{ and either } a \mbox{ odd, or } b \mbox{ even}),   \\
18.q.5.a  \mbox{ and } 18q.5.b \; (\mbox{with, in both cases, } \delta_1 \neq \delta_2, \mbox{ or } \delta_1=\delta_2=0  \mbox{ and }  \;   \\
a \mbox{ odd}), 18q.6.a,  36q.4.a  \; 36q.5.a, \; 36q.6.a, \;  36q.6.b, \;  36q.7.a, \; 72.q.5.a     \\ 
\mbox{and } 72q.5.b \; (\mbox{with, in both cases, } \delta=1 \mbox{ or } a =5),  \; 72q.9.a   \\
 (\mbox{with }  \delta_1 \neq \delta_2,  
 \mbox{or } \delta_1= 
\delta_2=0 \mbox{ and either } a = 5, \mbox{ or } a=4 
\mbox{ and $b$ even}), \\
   72q.10.a, \; 72q.11.a \mbox{ or } 72q.12.a. \\
\end{array}
\end{equation}
Our key observation to start is that, from (\ref{eqn:invariants}), the Frey-Hellegouarch curve $F=F_{A,B}^{(i)}$ has minimal discriminant of the shape $-3 T^2$ for $T=2^{6i-4} 3 q^{\alpha} C^{p}$. It follows that 
$E (\mathbb{F}_{\ell})$ contains a subgroup isomorphic to $\mathbb{Z}_2 \times \mathbb{Z}_2$ for every prime $\ell \nmid 6q$ for which $\left(\frac{-3}{\ell}\right)=1$; i.e. for $\ell \equiv 1 \mod{6}$.
We thus have that
\begin{equation} \label{shark}
a_{\ell} (F) \equiv \ell+1 \mod{4}
\end{equation}
for every such prime $\ell$. If, for each curve $E$ in the isogeny classes (\ref{gander}) and (\ref{goose}), we are able to find a prime $\ell \equiv 1 \mod{6}$ with $\ell \nmid 6q$, for which $a_{\ell} (E) \not\equiv \ell+1 \mod{4}$, it follows from (\ref{eqn:n-big-bound}), (\ref{bad}), (\ref{shark}) and the Hasse bounds  that
\begin{equation} \label{final}
p \leq \ell+1 + 2 \sqrt{\ell}.
\end{equation}

For curves $E$ in the isogeny classes (\ref{gander}), we may check that it suffices to choose, in all cases, 
$$
\ell \in \{ 7,  13, 19, 31, 37 \}.
$$
To show that we can always find a suitable prime $\ell$ for $E$ in the isogeny classes  (\ref{goose}), let us suppose that, more generally, we have an elliptic curve $E$ over $\mathbb{Q}$, with a rational $2$-torsion point at, say, $(0,0)$, given by the model
\begin{equation} \label{scooter}
E \; \; : \; \; y^2 = x^3 + ux^2 +vx.
\end{equation}
For a prime of good reduction $\ell \geq 5$ for $E$, it follows that the Fourier coefficient $a_{\ell} (E)$ satisfies $a_{\ell} (E) \equiv \ell+1 \mod{4}$ 
precisely when $E (\mathbb{F}_{\ell})$ contains a subgroup isomorphic to either $\mathbb{Z}_2 \times \mathbb{Z}_2$ or to $\mathbb{Z}_4$. The first case occurs exactly when the cubic $x^3 + ux^2 +vx$ splits completely modulo $\ell$, i.e. when
$$ 
\left( \frac{\Delta (E)}{\ell} \right) = \left( \frac{u^2 - 4v}{\ell} \right) = 1.
$$
To have the second case, there must exist a point $P$ in $E (\mathbb{F}_{\ell})$ with the property that $P \neq (0,0)$ but $2P = (0,0)$. From the standard duplication formula, the $x$-coordinate of the point $2P$ on $E$ satisfies
$$
x(2P) = \frac{x^{4}-2v x^{2} + v^{2}}{4y^{2}} = \frac{(x^2 - v)^2}{4y^{2}}
$$
and hence there can exist a point $P$ with $x(2P)=0$ precisely when $v$ is a square modulo $\ell$. In summary, we have that $a_{\ell} (E) \equiv \ell+1 \mod{4}$  exactly when either
$$
\left( \frac{\Delta (E)}{\ell} \right) = 1 \; \; \mbox{ or } \; \; \left( \frac{v}{\ell} \right) = 1.
$$

For the curves of conductor $36q$ and $72q$ ($36q.4.a$, $36q.5.a$, $36q.6.a$, $36q.6.b$, 
 $72.q.4.a$, $72q.4.b$, $72q.8.a$, $72q.9.a$, $72q.10.a$ and $72q.11.a.$), our given models are already of the form $y^2 = x^3 + ux^2 +vx$, with $u=a_2(E)$ and $v=a_4(E)$. For our families of conductor $18q$ ($18q.4.a$, $18q.5.a$, $18q.5.b$ and $18q.6.a$), we need to move our nontrivial rational $2$-torsion point to $(0,0)$ to obtain a (non-minimal) model of the shape (\ref{scooter}) (the discriminant remaining invariant modulo squares). We summarize our results in the following table.

$$
\begin{array}{|c|c|c|}
\hline
E & \mbox{ Additional conditions } & \left\{ \legendre{v}{\ell}, \legendre{\Delta (E)}{\ell} \right\} \\ [\smallskipamount]
\hline 
18q.4.a  & \mbox{unless } \delta_1 = \delta_2 = 0, \; a \mbox{ even and }  b \mbox{ odd  } &  \left\{ \legendre{(-1)^{\delta_1} q}{\ell}, \legendre{(-1)^{\delta_1+\delta_2+1} 2^a 3^b}{\ell}   \right\}   \\
18q.5.a  \mbox{ and } b & \mbox{unless } \delta_1 = \delta_2 = 0, \; a  \mbox{ even }  &  \left\{ \legendre{(-1)^{\delta_1} 3 q}{\ell}, \legendre{(-1)^{\delta_1+\delta_2+1} 2^a 3}{\ell}   \right\}   \\
18q.6.a  & \mbox{ none } &  \left\{ \legendre{3^b q}{\ell}, \legendre{-2}{\ell}   \right\}   \\
36q.4.a  & \mbox{ none }   & \left\{ \legendre{(-1)^{\delta_1}  q}{\ell}, \legendre{3 }{\ell} \right\} \\
36q.5.a  & \mbox{none}  & \left\{ \legendre{3}{\ell}, \legendre{(-1)^\delta q}{\ell} \right\}  \\
36q.6.a \mbox{ and } b  & \mbox{none}  & \left\{ \legendre{3 q}{\ell}, \legendre{ 3}{\ell}  \right\} \\
36q.7.a  & \mbox{ none }   & \left\{ \legendre{-1}{\ell}, \legendre{q }{\ell} \right\} \\
72q.5.a \mbox{ and } b  & (\delta,a) = (0,5), (1,4) \mbox{ or } (1,5) & \left\{ \legendre{3 q}{\ell}, \legendre{(-1)^{\delta+1} 2^{a} 3}{\ell}  \right\} \\
72q.9.a  & \mbox{unless } \delta_1 = \delta_2 = 0, \; a =4 \mbox{ and }  b \mbox{ odd  } &  \left\{ \legendre{(-1)^{\delta_1} q}{\ell}, \legendre{(-1)^{\delta_1+\delta_2+1} 2^a 3^b}{\ell}   \right\}   \\
72q.10.a  & \mbox{none} &  \left\{  \legendre{(-1)^\delta q}{\ell}, \legendre{2^a 3^b}{\ell}   \right\}   \\
72q.11.a \mbox{ and } 12a & \mbox{none} & \left\{  \legendre{3^bq}{\ell}, \legendre{-2}{\ell}   \right\}  \\
\hline
\end{array}
$$

For example, in case $E=72q.12.a1$, we have, for $\ell \nmid 6q$,
$$
\left( \frac{\Delta (E)}{\ell} \right) = \left( \frac{3^b q}{\ell} \right)   \; \; \mbox{ and } \; \;   \left( \frac{v}{\ell} \right)  = \left( \frac{a_4(E)}{\ell} \right) =  \left( \frac{-2}{\ell} \right).
$$
In particular, if we assume, say, that $b$ is odd, for any prime $\ell \equiv 7 \mod{24}$ such that $q$ is a quadratic residue modulo $\ell$, or prime $\ell \equiv 13 \mod{24}$ with $q$ a quadratic non-residue modulo $\ell$,  we have
\begin{equation} \label{boat}
\left( \frac{\Delta (E)}{\ell} \right) =  \left( \frac{v}{\ell} \right) = -1
\end{equation}
and hence for such a prime $\ell$, both $\ell \equiv 1 \mod{6}$ and  
\begin{equation} \label{shark1}
a_{\ell} (E) \not \equiv \ell+1 \mod{4}.
\end{equation}
In both cases, we therefore obtain inequality (\ref{final}). For the other isogeny classes in the above table, in each case there exists at least one pair of integers $(\ell_0,\delta)$, with $\ell_0 \in \{ 1, 7, 13, 19 \}$  
and $\delta \in \{ 0, 1 \}$, such that if 
\begin{equation} \label{cookie}
\ell \equiv \ell_0 \mod{24} \; \; \mbox{ and } \; \; \left( \frac{q}{\ell} \right) = (-1)^\delta, 
\end{equation}
then (\ref{boat})  and (\ref{shark1}) hold. To complete the proof of Theorem \ref{main1}, from (\ref{final}), we require a suitably strong upper bound for the smallest $\ell$ satisfying (\ref{cookie}). Such a bound would follow from either a modified version of the arguments traditionally used to find smallest non-residues modulo $q$ (though the additional constraint that $\ell \equiv \ell_0 \mod{24}$ causes some complications), or from an explicit version of Linnik's theorem on the smallest prime in a given arithmetic progression (see e.g. \cite{HB} for an effective but inexplicit result along these lines). For our purposes (and since we require something completely explicit), we will instead appeal to a recent result of the first author, Martin, O'Bryant and Rechnitzer \cite{BMOR}; here, $\theta (x; k, a)$ denotes the sum of the logarithms of the primes $p \equiv a \mod{k}$ with $p \leq x$.

\begin{theorem}[B.,  Martin, O'Bryant and Rechnitzer \cite{BMOR}] \label{main theta theorem} 
Let  $k$ and $a$ be integers with $k \ge 3$ and $\gcd (a,k) =1$. Then
$$
 \left| \theta (x; k, a) - \frac{x}{\phi (k)} \right| < \frac{0.00164 \, k}{\phi (k)} \, \frac{x}{\log x},
$$
for all $x \geq x_0(k)$, where
\begin{equation} \label{x_0(k) definition}
x_0(k) = \begin{cases}
7.5\times10^7, &\text{if } 3\le k \le 266, \\
2\times10^{10}/k, &\text{if } 267\le k \le 10^5, \\
\exp(0.0442 \sqrt k\log^3k), &\text{if } k>10^5.
\end{cases}
\end{equation}
\end{theorem}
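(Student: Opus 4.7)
The plan is to follow the classical Rosser--Schoenfeld strategy for estimating $\theta(x)$, extended to arithmetic progressions, with the modern refinements needed to obtain the explicit polynomial dependence on $k$ in the claimed bound. First, I would reduce the problem from $\theta$ to $\psi(x;k,a) := \sum_{n \leq x,\, n \equiv a \bmod k} \Lambda(n)$, since
$$\psi(x;k,a) - \theta(x;k,a) \;\leq\; \sum_{m \geq 2} \theta(x^{1/m}; k, a) \;\ll\; \sqrt{x}\,\log x,$$
which is negligible compared to the target $x/\log x$ error. Writing
$$\psi(x;k,a) \;=\; \frac{1}{\phi(k)} \sum_{\chi \bmod k} \overline{\chi}(a)\, \psi(x, \chi),$$
the principal character contributes $x/\phi(k)$ up to an error involving $\log$'s of the primes dividing $k$, while each non-principal character is controlled via an explicit truncated form of the von Mangoldt--Landau formula
$$\psi(x, \chi) \;=\; -\sum_{|\gamma| \leq T} \frac{x^{\rho}}{\rho} \;+\; R(x, T, \chi),$$
where $\rho = \beta + i\gamma$ runs over nontrivial zeros of $L(s, \chi)$ and $R$ is an explicitly bounded truncation error.

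The main analytic input is then an explicit zero-free region of the classical shape $\beta \leq 1 - c_1/\log(k(|\gamma|+2))$ for nontrivial zeros of $L(s,\chi)$ with $\chi \bmod k$, combined with an explicit Riemann--von Mangoldt count of the number of zeros in boxes $[\beta, 1] \times [-T, T]$. For the zero-free region one can invoke existing explicit work in the style of Kadiri, with careful attention paid to potential Siegel zeros for real primitive characters; these are handled either by an explicit Page-type lower bound or, for small $k$, by direct verification that no such zero exists for the (finitely many) relevant conductors. Substituting the zero-free region into the truncated explicit formula and then optimising $T$ in terms of $x$ and $k$ produces an error of order $x/\log x$ with an implicit constant of the correct $k/\phi(k)$ shape, which must finally be driven below the explicit figure $0.00164$.

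The trichotomy defining $x_0(k)$ reflects three computational regimes. For $k \leq 266$ the strategy is a massive numerical verification of GRH up to some height $H(k)$ for every Dirichlet $L$-function of conductor dividing $k$, together with Turing's method to certify completeness of the zero list; for $x \geq 7.5 \times 10^7$ the explicit formula under this ``GRH up to $H(k)$'' hypothesis delivers the claimed bound directly. For $267 \leq k \leq 10^5$ one balances partial zero verification against analytic zero-free-region estimates, which accounts for the inverse threshold $x_0(k) = 2 \times 10^{10}/k$. For $k > 10^5$ no further zero computation is practical and one is left with the classical zero-free region alone, producing the quasi-exponential threshold $\exp(0.0442\sqrt{k}\log^3 k)$. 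The principal obstacle is keeping every constant completely explicit throughout---tracking the interplay between the explicit formula, the zero-count, and the zero-free region with no losses---and, in the small-$k$ range, executing the zero computations via rigorous interval arithmetic so as to justify the numerical GRH verification that underlies the whole argument.
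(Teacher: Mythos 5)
You should first be aware that the paper you are working from contains no proof of this statement at all: it is quoted verbatim as an external result of Bennett, Martin, O'Bryant and Rechnitzer \cite{BMOR} (a paper listed as ``in preparation''), so there is no internal argument to compare yours against. That said, your outline correctly reconstructs the architecture of the proof in that reference: the reduction from $\theta(x;k,a)$ to $\psi(x;k,a)$ with an $O(\sqrt{x}\log x)$ loss, decomposition by character orthogonality, the truncated explicit formula for $\psi(x,\chi)$, explicit zero-free regions in the style of Kadiri, rigorous numerical verification of GRH to bounded height for small conductors (with Turing's method certifying completeness of the zero lists), and the effective Page-type treatment of possible exceptional zeros whose $1-\beta_0 \gg k^{-1/2}\log^{-2}k$ lower bound is precisely what produces the $\exp\left(0.0442\sqrt{k}\log^3 k\right)$ threshold for $k>10^5$. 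Your explanation of the three regimes in the definition of $x_0(k)$ is also the correct one.

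The genuine gap is that what you have written is a roadmap, not a proof, and for a theorem of this kind the roadmap is the easy part: the entire content of the statement resides in the specific constants $0.00164$, $7.5\times 10^7$, $2\times 10^{10}/k$ and $0.0442$, and nothing in your proposal derives any of them. Producing them requires (i) a fully explicit version of every intermediate inequality --- the truncation error $R(x,T,\chi)$, the zero-counting function $N(T,\chi)$ with explicit main term and error, the contribution of the zeros in the verified region versus those controlled only by the zero-free region --- with the parameter $T$ optimized against $x$ and $k$ in each regime; and (ii) the actual large-scale computations (interval-arithmetic zero verification for all characters of the relevant conductors, plus direct summation of $\theta(x;k,a)$ for small $x$ to cover the range below where the analytic argument takes over). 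Without carrying out at least a representative instance of this bookkeeping, one cannot certify that the constant lands below $0.00164$ rather than, say, $0.01$, and the theorem as stated is not established. If you intend this as a citation of \cite{BMOR} rather than an independent proof, that is exactly what the paper itself does, and no proof is required here.
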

With this result in hand, we may prove the following
\begin{prop} \label{chumpy}
Let $q \geq 7$ be prime and suppose that $\ell_0 \in \{ 1, 7, 13, 19 \}$  
and $ \delta \in \{ 0, 1 \}$. Then there exists a prime $\ell \neq q$ satisfying $\eqref{cookie}$ with 
$\ell < e^q$.
\end{prop}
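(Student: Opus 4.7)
\medskip
\noindent\textbf{Proof proposal.} The plan is to recast the Legendre-symbol condition as a congruence by quadratic reciprocity, combine it with $\ell\equiv\ell_0\pmod{24}$ via the Chinese Remainder Theorem, and then apply Theorem~\ref{main theta theorem} summed over the resulting residue classes. Since $\ell_0$ fixes $\ell$ modulo $4$ (note $4\mid 24$), quadratic reciprocity converts $\left(\frac{q}{\ell}\right)=(-1)^\delta$ into a specification of $\left(\frac{\ell}{q}\right)$ (with the sign depending on $\ell_0\bmod 4$ and $q\bmod 4$), and hence picks out exactly $(q-1)/2$ residue classes modulo $q$. Using $\gcd(24,q)=1$, CRT combines these with $\ell\equiv\ell_0\pmod{24}$ to give a union of $N:=(q-1)/2$ residue classes $a_1,\dots,a_N$ modulo $k:=24q$, each coprime to $k$.

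Since $\phi(24q)=8(q-1)$, summing the estimate of Theorem~\ref{main theta theorem} over these classes gives, for $x\ge x_0(24q)$,
$$
\sum_{i=1}^{N}\theta(x;\,24q,a_i) \;\ge\; \frac{Nx}{\phi(24q)} - N\cdot\frac{0.00164\cdot 24q}{\phi(24q)}\cdot\frac{x}{\log x} \;=\; \frac{x}{16}-\frac{0.00246\,q\,x}{\log x}.
$$
Choosing $x=e^q$ so that $\log x=q$, the right-hand side becomes at least $(\tfrac{1}{16}-0.00246)\,e^q>\tfrac{1}{20}e^q$, which is strictly positive and comfortably larger than $\log q$ (the contribution of the single excluded prime $\ell=q$). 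Hence a prime $\ell\ne q$ with $\ell\le e^q$ exists in the prescribed union of classes, proving the proposition.

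The step I expect to be the main obstacle is verifying the applicability hypothesis $e^q\ge x_0(24q)$. For $24q>10^5$ we have $x_0(24q)=\exp(0.0442\sqrt{24q}\,\log^3(24q))$, and a direct growth comparison shows that $0.0442\sqrt{24q}\,\log^3(24q)\le q$ holds for all $q$ above some explicit threshold $q_0$ (the crossover is around $q\sim 10^5$). For primes $q$ below $q_0$, the piecewise definition of $x_0(k)$ yields moderate bounds of the shape $7.5\times 10^7$ or $2\times 10^{10}/(24q)$, and in each case either $e^q$ already exceeds this threshold or the prime $q$ lies in a small finite set which we will dispatch by direct computation: for each such $q$ and each pair $(\ell_0,\delta)$, exhibiting an individual witness prime $\ell$ satisfying \eqref{cookie} with $\ell<e^q$ is a short finite search. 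A final bookkeeping check confirms that the $(1/16-0.00246)$ margin in the error analysis is robust across the whole range of $q$ to which Theorem~\ref{main theta theorem} is applied.
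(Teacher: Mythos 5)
Your proposal is correct and follows essentially the same route as the paper: convert \eqref{cookie} via CRT into residue classes modulo $24q$, apply Theorem~\ref{main theta theorem} at $x=e^q$, and dispose of small $q$ by direct search (the paper does this for $7\le q\le 17$, which is also where it notes the failure at $q=5$, $\ell_0=\delta=1$). The only differences are minor: the paper applies the theorem to a single admissible class $a\bmod 24q$ rather than summing over all $(q-1)/2$ of them, obtaining the cleaner bound $\theta(e^q;24q,a)>0.24\,e^q/(q-1)>\log q$, and it simply asserts $e^q>x_0(24q)$ for all $q\ge 19$ rather than examining the applicability threshold that you rightly flag as the delicate point.
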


\begin{proof}
Given $\ell_0 \in \{ 1, 7, 13, 19 \}$  and $ \delta \in \{ 0, 1 \}$, conditions (\ref{cookie}) are equivalent, via the Chinese Remainder Theorem,  to a congruence of the shape $\ell \equiv a \mod{24 q}$ for some integer $7 \leq a < 24q$ with $\gcd (a,24q)=1$. For $7 \leq q \leq 17$ and each of the $8$ pairs $(\ell_0, \delta)$, we verify  by direct computation that we can always find an $\ell < e^q$ with (\ref{cookie})
(this fails to be true for $q=5$ and $\ell_0=\delta=1$, which is why we have omitted this value). If $q \geq 19$, then $e^q > x_0(24q)$ and hence we may apply Theorem \ref{main theta theorem} to conclude that 
$$
 \left| \theta (e^q; 24 q, a) - \frac{e^q}{4 (q-1)} \right| < \frac{0.00984 \, e^q}{q-1},
$$
whereby
$$
 \theta (e^q; 24 q, a) > \frac{0.24 \, e^q}{q-1} > \log q.
 $$
 It follows that there exists a prime $\ell \equiv a \mod{24q}$ (which necessarily  also satisfies (\ref{cookie})) with $\ell \neq q$ and $\ell < e^q$, as desired.
\end{proof}

If $q=5$, then, for $\ell_0 \in \{ 1, 7, 13, 19 \}$  and $ \delta \in \{ 0, 1 \}$, we can always find an $\ell$ with (\ref{cookie}) and $\ell \leq 241$. From (\ref{final}), we thus have
$$
p \leq 242 + 2 \sqrt{241} < 5^{10}.
$$
For $q \geq 7$, we apply Proposition \ref{chumpy} to (\ref{final}) to conclude that
$$
p < e^q+1+2 \sqrt{e^q} = (e^{q/2}+1)^2 < q^{2q}.
$$
This completes the proof of Theorem \ref{main1}.

\section{Sets of primes and trivial solutions} \label{freebie}

\subsection{Intersections of the $S_i$} \label{sets} 
We would like to make a few remarks on the sets $S_i$.
Firstly, we note that some of the $S_i$ overlap substantially. Obviously, primes of the form $(3^b+1)/4$ belong to both $S_6$ and $S_7$, while many primes in $S_1$ are also in $S_4$ (taking $d=1$). Additionally,  every prime $q \in S_5$ of the shape $q=3 d^2+2^a$ with $a=2$ or $a \geq 8$, and $d=\pm 3^k$ for $k$ an integer, is necessarily also in $S_2$.

For many  other $i \neq j$, the intersection $S_i \cap S_j$ is rather small. For future use, it will be helpful for us to record an explicit statement along these lines.
\begin{prop} \label{intersect}
We have
$$
\begin{array}{c}
S_1 \cap S_2 = \left\{ 5, 7, 11, 13, 23, 31, 37, 73 \right\}, \; S_1 \cap S_3 =  \left\{ 11 \right\},  \; S_1 \cap S_5 = \{ 7, 31 \}, \\
S_1 \cap S_7 = \{ 7, 37, 127 \},  \; S_1 \cap S_8 =  \{ 13 \}, \; S_2 \cap S_3 =\left\{ 11 \right\}, \;  S_3 \cap S_4 = \emptyset, \\
S_3 \cap S_5 = \{ 43 \}, \; S_3 \cap S_7 =  S_3 \cap S_8 = S_4 \cap S_5 =S_5 \cap S_8 = S_7 \cap S_8 =  \emptyset. \\
\end{array}
$$
\end{prop}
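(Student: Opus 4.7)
The proof is essentially a bookkeeping exercise: each pair $(S_i, S_j)$ imposes a system of Diophantine equations obtained by equating the two parametric expressions for $q$, and the systems fall into three standard shapes. My plan is to sort the pairs into (i) those killed by a $2$-adic congruence, (ii) those reducible to Pillai's equation $|2^u - 3^v| = c$ for small $c$, and (iii) those involving the Pell constraint from $S_8$.

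\emph{Residue obstructions.} For each $S_i$ I would first compute the permissible residues of $q$ modulo $8$ and $16$. Several empty intersections fall out immediately. For $S_3 \cap S_4$: if $q = (2^a+1)/3$ with $a \geq 5$, then $2^a \equiv 0 \pmod{32}$, so $3q \equiv 1 \pmod{32}$, whence $q \equiv 11 \pmod{16}$ and in particular $q \equiv 3 \pmod 4$; but $q \in S_4$ forces $q = d^2 + 2^{a'} 3^{b'}$ with $d$ odd and $a' \geq 2$ even, so $q \equiv 1 \pmod 4$. A parallel mod-$8$ census gives $q \in S_4 \Rightarrow q \in \{1, 5\} \pmod 8$, $q \in S_5 \Rightarrow q \in \{3, 7\} \pmod 8$, and $q \in S_8 \Rightarrow q \in \{1, 5\} \pmod 8$ (since $v$ must be odd so $v^2 \equiv 1$ or $9 \pmod{16}$), which simultaneously kills $S_4 \cap S_5$ and $S_5 \cap S_8$.

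\emph{Pillai-type reductions.} For the non-empty intersections and for the remaining empty ones not involving $S_8$, I would equate the two expressions for $q$, clear denominators, factor out the largest common $2^\mu 3^\nu$, and reduce to an equation $|2^u - 3^v| = c$ with $c \in \{1, 2, 3, 4, 5, 7\}$ (or a near-variant). The complete solution sets of all such equations are classical and the small values are fully explicit in \cite{Be2003}, so after enumerating the finitely many $(a,b,a',b')$ compatible with the admissible ranges in the definitions of $S_i, S_j$, one recovers the claimed finite lists. For example, $S_3 \cap S_7$ reduces to $(3d-1)(3d+1) = 2^{a+2}$; the two factors are both even with $\gcd = 2$, so must equal $2$ and $2^{a+1}$, forcing $d=1$ and $a=1 < 5$, hence the intersection is empty. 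The intersections $S_1 \cap S_2$, $S_1 \cap S_3$, $S_1 \cap S_5$, $S_1 \cap S_7$, $S_2 \cap S_3$, and $S_3 \cap S_5$ are handled analogously, with the listed primes verified by direct substitution.

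\emph{Pell-type intersections and main obstacle.} The genuinely delicate cases are those involving $S_8$, where the Pell equation $u^2 - 3v^2 = -2$ produces the infinite family $v \in \{1, 3, 11, 41, 153, \ldots\}$ satisfying $v_{n+1} = 4v_n - v_{n-1}$. For $S_7 \cap S_8$, cross-multiplication yields the simultaneous system $d^2 - 2v^2 = -1$, $u^2 - 3v^2 = -2$; the two Pell recurrences for $v$ have distinct periods modulo small primes (e.g.\ $5$ or $7$), and a covering-congruence argument isolates $v = 1$ as the unique common value, giving $q = 1$ and hence an empty prime intersection. For $S_3 \cap S_8$, substituting $q = (2^a+1)/3$ into $q = (3v^2-1)/2$ gives $9v^2 - 2^{a+1} = 5$; reducing mod $9$ forces $a \equiv 1 \pmod 6$, and a direct check against the Pell sequence $v_n$ together with a $2$-adic bound eliminates all $v \geq 3$. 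The hardest step will be establishing the covering congruences cleanly, since the two linear recurrences for $v$ must be shown to miss one another forever; if no sufficiently short covering exists, I would fall back on a finiteness theorem for simultaneous Pell systems and a finite computer search. All other cases are mechanical.
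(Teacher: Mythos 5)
Your congruence census and the factoring argument for $S_3 \cap S_7$ match the paper, which likewise disposes of $S_3\cap S_4$, $S_3\cap S_8$, $S_4\cap S_5$ and $S_5\cap S_8$ purely by reduction modulo $8$ (your detour through $9v^2-2^{a+1}=5$ and a mod-$9$ analysis for $S_3\cap S_8$ is unnecessary, since that equation already dies modulo $8$). Your reformulation of $S_7\cap S_8$ as the simultaneous Pell system $d^2-2v^2=-1$, $u^2-3v^2=-2$ is correct and a legitimate alternative to the paper's route, which instead derives $(3dv)^2=2u^4+5u^2+2$ and invokes Magma's integral-quartic-point routine; but you honestly concede that you have not produced the covering congruence or carried out the fallback search, so that case is left open in your write-up.

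The genuine gap is in the middle block. The intersections $S_1\cap S_2$, $S_1\cap S_3$ and $S_2\cap S_3$ do \emph{not} reduce to a Pillai equation $\left|2^u-3^v\right|=c$ with small $c$: equating, say, $2^a3^b+\varepsilon_1$ with $\pm 2^{a'}\pm 3^{b'}$ yields a four-term exponential equation in which all of $a,b,a',b'$ are unbounded, and extracting the common power of $2$ or $3$ still leaves at least three terms. The paper must cite Theorems 1--3 of Tijdeman--Wang and Theorems 3--5 of Wang, whose proofs rest on lower bounds for archimedean and $p$-adic linear forms in logarithms; your plan supplies no substitute for this input. Likewise $S_1\cap S_5$ and $S_1\cap S_7$ do not become Pillai equations: after the congruence reductions they collapse to the exponential--quadratic equation $2^x=3y^2+5$, which the paper resolves by computing integral points on the three Mordell curves $Y^2=X^3-2^{2x_0}\cdot 3^3\cdot 5$ via the Bennett--Ghadermarzi tables. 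And $S_3\cap S_5$ is the hardest case of all: the paper runs a $2$-adic descent on $9d^2=2^a-3\cdot 2^{a_5}+1$ and then invokes the hypergeometric-method bound $\left|9d^2-2^a\right|>2^{0.26a}$ from Bauer--Bennett to force $a_5\le 13$ before a finite check. Labelling these cases ``handled analogously'' and ``mechanical'' conceals exactly the three nontrivial inputs (multi-term $S$-unit equations, Ramanujan--Nagell-type equations, effective irrationality measures) on which the proposition actually depends. You also omit $S_1\cap S_8=\{13\}$ entirely, though that one genuinely is elementary: reduce modulo $4$ to fix the sign, then factor $v^2-1=2^{a+1}3^{b-1}$.
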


\begin{proof}
The desired conclusions for $S_1 \cap S_2$, $S_1 \cap S_3$ and $S_2 \cap S_3$ all follow from combining Theorems 1, 2 and 3 of Tijdeman and Wang \cite{TiWa} with Theorems 3, 4 and 5 of Wang \cite{Wang}. Further, the fact that
$$
S_3 \cap S_4 = S_3 \cap S_8 = S_4 \cap S_5 =S_5 \cap S_8 =  \emptyset
$$
is immediate from considering the corresponding equations modulo $8$.

If $q \in S_1 \cap S_5$, then there exist integers $a, b, \delta, d$ and $a_5$ with $a \in \{ 2, 3 \}$ or $a \geq 5$, $b \geq 0$, $\delta \in \{ 0, 1 \}$, $d \geq 1$ and $a_5 \in \{ 2, 4 \}$ or $a_5 \geq 8$ even, such that $2^a 3^b + (-1)^\delta = 3 d^2 + 2^{a_5}$. Modulo $4$, we have that $\delta=1$ and so, modulo $3$, $b=0$. It follows, modulo $8$, that $a_5=2$, so that $2^a = 3d^2 + 5$. We have
\begin{lemma} \label{frosty}
If $x$ and $y$ are positive integers such that $2^x=3y^2+5$, then 
$$
(x,y) \in \left\{ (3,1), \, (5,3), \, (9,13) \right\}. 
$$
\end{lemma}
\begin{proof} (of Lemma \ref{frosty})
Writing $x = 3x_1 + x_0$ for $x_0 \in \{ 0, 1, 2 \}$, we have that a solution to the equation $2^x=3y^2+5$ necessarily corresponds to an integer point on the (Mordell) elliptic curve $Y^2 = X^3 - 2^{2x_0} \cdot 3^3 \cdot 5$ (with $Y=2^{x_0} \cdot 3^2 \cdot y$ and $X= 3 \cdot 2^{x_0+x_1}$).  We can find the  integer points for each of these curves at
\url{http://www.math.ubc.ca/~bennett/BeGa-data.html} (see \cite{BeGh} for more details), whereby the stated conclusion obtains. 
\end{proof}
From this lemma, we therefore have $S_1 \cap S_5 = \{ 7, 31 \}$, as desired. If instead $q \in S_1 \cap S_7$, then we have integers $a, b, \delta$ and $d$, with $a \in \{ 2, 3 \}$ or $a \geq 5$, $b \geq 0$, $\delta \in \{ 0, 1 \}$, $d \geq 1$ and $2^a 3^b + (-1)^\delta = \frac{3 d^2 +1}{4}$. If $b=0$, then, modulo $3$, $\delta=1$, whence $2^{a+2}=3d^2+5$ and so, from Lemma \ref{frosty}, $a \in \{ 1, 3, 7 \}$, corresponding to $q=7$ and $q=127$. If $b=1$, then, again modulo $3$, $\delta=0$ and hence $d^2=2^{a+2} 3^{b-1} + 1$. An elementary factoring argument implies that $(a,b,d) = (1,0,3), (1,2,5)$ or $(2,2,7)$, corresponding to $q=37$.

Suppose next that we have $q \in S_1 \cap S_8$, so that there exist integers $a, b, \delta, v$ with $a \in \{ 2, 3 \}$ or $a \geq 5$, $b \geq 0$, $\delta \in \{ 0, 1 \}$ and $2^a 3^b + (-1)^\delta = \frac{3 v^2 -1}{2}$. Modulo $4$, $\delta =0$ and hence $2^{a+1} 3^{b-1} + 1 = v^2$; again via factoring $v^2-1$ we find, after a little work, that $|v|=3$ and $q=13$. If $q \in S_3 \cap S_5$, there are integers $a, d $ and $a_5$ with $a \geq 5$ odd,  $d \geq 1$, $a_5 \in \{ 2, 4 \}$ or $a_5 \geq 8$ even, and $\frac{2^a+1}{3} = 3d^2 + 2^{a_5}$. Modulo $8$, we have that $a_5 \geq 4$ and so 
\begin{equation} \label{eq35}
9 d^2 = 2^a - 3 \cdot 2^{a_5} + 1. 
\end{equation}
We thus have $a \geq a_5 + 3$ and there must exist integers $\delta \in \{ 0, 1 \}$  and positive $d_1 \equiv \pm 1 \mod{6}$ such that $3 d = 2^{a_5-1} d_1 + (-1)^\delta$, whence
$$
2^{a_5-2} d_1^2 + (-1)^\delta d_1 = 2^{a-a_5} - 3.
$$
If $d_1=1$, then $\delta=0$ and we have $2^{a_5-4} + 1 = 2^{a-a_5-2}$, so that $a_5=4$, $a=7$, $d=3$ and $q=43$. If $d_1 > 1$, then $d_1 \geq 5$ and so $2^{a_5-2} 5^2 -5  \leq  2^{a-a_5} - 3$. It follows that $a \geq 2 a_5+2$ and hence
$2^{a_5-2} \mid (-1)^\delta d_1 + 3$, say $(-1)^\delta d_1 + 3 = 2^{a_5-2} d_2$, for $d_2 \in \mathbb{Z}$. We thus have
$$
2^{2a_5-4} d_2^2 - 3 \cdot 2^{a_5-1} d_2 + 9  + d_2 = 2^{a-2a_5+2}.
$$
Since $a_5 \geq 4$ and $a \geq 2 a_5+2$, we have that $9+d_2 \equiv 0 \mod{8}$. If $d_2=-1$, $2^{2a_5-7} + 3 \cdot 2^{a_5-4} + 1 = 2^{a-2a_5-1}$, contradicting $a_5=4$ or $a_5 \geq 8$. We thus have $d_2=7$ or $|d_2| \geq 9$, so that
$$
2^{a-2a_5+2} \geq 2^{2a_5-4} \cdot 7^2  - 3 \cdot 2^{a_5-1} \cdot 7 + 16 > 2^{2a_5 +1},
$$
and so $a \geq 4 a_5+4$. Applying Corollary 1.7 of \cite{BaBe}, since $a \geq 4 a_5+4 \geq 24$, we have from (\ref{eq35}) that
$$
3 \cdot 2^{a_5} -1 = \left| 9d^2-2^a \right| > 2^{0.26 a} \geq 2^{1.04 a_5+1.04},
$$
whereby $a_5 \leq 13$. A short check confirms that $S_3 \cap S_5 = \{ 43 \}$, as stated.

For $q \in S_3 \cap S_7$, $q = \frac{2^a+1}{3} = \frac{3d^2+1}{4}$ for $a \geq 5$ and $d$ odd integers, so that $2^{a+2}+1 =9d^2$ and yet another elementary argument implies that $a=1$, a contradiction. Let us therefore suppose, finally, that $q \in S_7 \cap S_8$. We thus have
$$
4 q = 3d^2+1 = 6 v^2 - 2 = 2 u^2 + 2,
$$
for integers $d, u$ and $v$, and so
$$
(3 d v)^2 = (2u^2+1)(u^2+2) = 2 u^4 + 5 u^2 + 2.
$$
From Magma's {\it IntegralQuarticPoints} routine, we find that the only integer solution to the latter equation is with $|dv|=|u|=1$. This completes the proof of Proposition \ref{intersect}.
\end{proof}

It should also be noted that representations within a given set $S_i$ are sometime unique, but not always. In particular, it is straightforward to show that a given prime $q \in S_1$ has a single representation of the form
$q=2^a 3^b \pm 1$, with $b \geq 0$ and $a \in \{ 2, 3 \} \mbox{ or } a \geq 5$, while a similar conclusion is immediate for primes $q \in S_i$ for $i \in \{ 3, 7, 8 \}$. The situation in $S_2$ is slightly more complicated; combining work of Pillai \cite{Pi} with Stroeker and Tijdeman \cite{StTi}, the only primes with multiple representations of the form $q = |2^a \pm 3^b|$, with $b \geq 1$ and  $a \in \{ 2, 3 \} \mbox{ or } a \geq 5$, are $q \in \{ 5, 13, 17, 23, 73 \}$, corresponding to the identities
$$
5 = 2^3-3 = 2^5-3^3 = 3^2-2^2,
$$
$$
13 = 2^8-3^5 = 2^2+3^2,
$$
$$
17 = 3^4-2^6 = 2^3+3^2,
$$
$$
23 = 2^5-3^2 = 3^3-2^2,
$$
and
$$
73=3^4-2^3 = 2^6+3^2.
$$

\subsection{Limitations due to trivial solutions} \label{trivial}
Notice that we have the identity
$$
\left( \frac{d+1}{2} \right)^3 +  \left( \frac{1-d}{2} \right)^3 = \frac{3d^2+1}{4}
$$
and hence, for all exponents $n$, a coprime integer solution with $C=1$ to the equation 
$$
A^3+B^3 = q^{\alpha} \cdot C^n, \; \; \mbox{ whenever } \; \; q^{\alpha} =\frac{3d^2 +1}{4}.
$$
We expect $q^\alpha$ to be of this shape infinitely often for $\alpha=1$ and $\alpha=2$ (these are precisely the primes in $S_7$ and $S_8$, respectively), though both of these results are a long way from provable with current technology. 

We will term  a solution to \eqref{ABC} with $C=1$ {\it trivial}, whereby, for primes $q$ as above, there exists a trivial solution for all prime exponents $p$. 
In particular, this means that one of the newforms $f\in S_2^+(N_{F}/\mathcal{R})$ (see Section~\ref{Frey}) will 
correspond (via modularity) to the Frey curve $F$ evaluated at the trivial solution.
This is a major obstruction to the modular method;
the techniques of this paper are unlikely to provide further information 
about \eqref{ABC} with $\alpha=1$ for $q \in S_7$ and $\alpha=2$ for $q \in S_8$.

\smallskip

A similar relation is the identity
$$
(d+4)^3+ (4-d)^3 = 8 \left( 3d^2+16 \right).
$$
While this does not actually give trivial solutions to (\ref{ABC}) in case $\alpha =1$ and $q = 3d^2+16$ (a subset of the primes in $S_5$), it does appear to provide an obstruction to solving (\ref{gump}) for such primes, leading to Frey-Hellegouarch curves that play the role of the curve $72A1$ for equation (\ref{start}).

\section{ Applying the symplectic criteria} \label{apply}

Let $E$ and $F$ be elliptic curves over $\Q$ and suppose there exists an isomorphism 
$\phi : F[p] \to E[p]$ of $\Gal(\Qbar / \Q)$-modules. Here, $F[p]$ and $E[p]$ are the $p$-torsion modules attached to $F$ and $E$, respectively. 
Write $e_{E,p}$ and $e_{F,p}$ for the Weil pairings on $E[p]$ and $F[p]$, respectively. Then there exists an element $r (\phi) \in \F_p^{\times}$ such that
$$
e_{E,p} \left( \phi (P), \phi(Q) \right) = e_{F,p} \left( P, Q \right)^{r (\phi)} \; \mbox{ for all } \; P, Q \in F[p].
$$
If $r(\phi)$ is a square in $\F_p^\times$, we call the isomorphism $\phi$ {\it symplectic}; if $r(\phi)$ is a non-square, we call it {\it anti-symplectic}. We say that $E[p]$ and $F[p]$ are {\it symplectically (anti-symplectically) isomorphic} if there exists a symplectic (anti-symplectic) isomorphism $\phi$ between them.
It is possible that $E[p]$ and $F[p]$ are both symplectically and anti-symplectically isomorphic, but this situation will not occur in the applications of these techniques in this paper. 

\subsection{The symplectic argument}
To treat equation \eqref{ABC} for certain primes $q \in S_0$ and exponents $p \geq q^{2q}$ we 
need to use a number of local symplectic criteria to describe the symplectic type of the isomorphisms between the $p$-torsion modules $E[p]$ and $F[p]$, where $F$ is our Frey-Hellegouarch curve and $E$ is one of the curves in Corollary~\ref{wonder} (see Section~\ref{Frey} and Theorem~\ref{main1}). 
The idea is to use local information at different primes $\ell$ to 
obtain congruence conditions on the exponent $p$ for which $E[p]$ and $F[p]$ are symplectically and 
anti-symplectically isomorphic. Then, our desired contradictions will arise each time we are able to prove that these constraints are incompatible. This is, in essence, what is sometimes called the {\it symplectic argument}.
One advantage we have here, working with equation (\ref{ABC}) as opposed to equation (\ref{start}), is that we will be able to apply the (local) criteria at the primes $\ell \in \{ 2, 3, q \}$ rather than just $\ell \in \{ 2, 3 \}$. 

\subsection{Notation} Let $\ell$ be a prime and, for a nonzero integer $t$, define $\nu_{\ell} (t)$ to be the largest nonnegative integer such that $\ell^{\nu_{\ell}(t)}$ divides $t$.
Let $E/\Q_\ell$ be an elliptic curve and write $c_4 (E)$, $c_6 (E)$ and $\Delta (E)$
for the usual invariants attached to a minimal model of $E$.
Further, define the quantities $c_4 (E)_{\ell}$, $c_6 (E)_{\ell}$ 
and $\Delta (E)_{\ell}$ by
$$
c_4 (E)  = \ell^{\nu_{\ell}(c_4 (E) )} c_4 (E)_{\ell}, \; \;  c_6 (E)  = \ell^{\nu_{\ell}(c_6 (E) )} c_6 (E)_{\ell} \; \mbox{ and } \;  \Delta (E)  =\ell^{\nu_{\ell}(\Delta (E) )} \Delta (E)_{\ell}.
$$
Let $\Q_{\ell}^{\text{un}}$ to be the maximal unramified extension of $\Q_{\ell}$. For an elliptic 
curve $E/\Q$ with potentially good reduction at $\ell$ we write $e(E,\ell)$ to denote the order of $\Gal (\Q_{\ell}^{\text{un}}(E[p])/\Q_{\ell}^{\text{un}})$ for $p \geq 3$ different from $\ell$. It is well known that $e(E,\ell)$ is independent of $p$.

\subsection{The curves}
Except for the few isogeny classes given in Corollary~\ref{wonder} 
by their Cremona label, from Theorem \ref{main1}, we are 
primarily interested in applying symplectic criteria to our Frey-Hellegouarch curve and the isogeny classes of curves
$$
\begin{array}{c}
18q.1.a, 18q.2.a, 18q.3.a, 18q.4.a \; (\mbox{with } \delta_1=\delta_2=0, a \mbox{ even}, b \mbox{ odd}), 18.q.5.a \mbox{ and }  \\
18q.5.b \; (\mbox{with, in both cases, } \delta_1=\delta_2=0 \mbox{ and } a \mbox{ even}), 36q.1.a, 36q.1.b, 36q.2.a,   \\
36q.2.b, 36q.3.a,  72q.1.a, 72q.2.a, 72q.3.a, 72q.4.a, 72q.4.b, 72.q.5.a  \mbox{ and } 72q.5.b \\
(\mbox{with, in both cases, } \delta=0 \mbox{ and } a =4), 72q.6.a, 72q.6.b, 72q.7.a, 72q.8.a  \\
 \mbox{ and }  72q.9.a \; (\mbox{with } \delta_1=\delta_2=0, a =4, \, b \mbox{ odd}).\\
\end{array}
$$
The relevant arithmetic data $c_4(E)$, $c_6(E)$ and $\Delta (E)$ is available in our Appendix and in the statements of Theorems \ref{theorem18q}, \ref{theorem36q} and \ref{theorem72q}. 
In the remainder of this section we will apply the criteria to the curves listed above 
to obtain congruence conditions on $p$. Then, in Section~\ref{applications}, we complete the symplectic argument by deriving contradictions from these conditions, allowing us to finish 
the proofs of our main Diophantine statements. We start by proving the following proposition which
holds for all our choices of $E$, independent of whether $E$ has conductor $N_E=18q$, $36q$ or $72q$.

\begin{prop} \label{multiplicativeAtq} 
Let $(A,B,C)$ be a non-trivial primitive solution to \eqref{ABC} so that
there is a $\Gal(\Qbar/ \Q)$-modules isomorphism $\phi \; : \; F[p] \to E[p]$, where $F$ 
is the Frey-Hellegouarch curve and $E$ is any elliptic curve in one of the isogeny classes above. 
Then
\[\phi \; \text{is symplectic} \quad \iff  \quad \alpha
\text{ is a square mod } p.\]
\end{prop}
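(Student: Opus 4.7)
The natural place to apply a symplectic criterion is at the prime $\ell = q$: this is the unique bad prime where the minimal discriminant of the Frey-Hellegouarch curve $F$ encodes the exponent $\alpha$, while the curves $E$ appearing on the right-hand side of level lowering have ``generic'' behavior there. By the formula $\Delta(F) = -2^{12i-8} 3^3 q^{2\alpha} C^{2p}$ recorded in \eqref{eqn:invariants}, together with Lemma~\ref{lem:conductor-of-frey-curve-2}, the Frey curve $F$ has multiplicative reduction at $q$ with $v_q(\Delta_F) = 2\alpha$. The hypothesis $p \geq q^{2q}$ comfortably ensures $p \nmid 2\alpha$, so we are in the range where the multiplicative-reduction symplectic criterion applies. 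A direct inspection of the discriminant columns in Theorems~\ref{theorem18q}, \ref{theorem36q}, \ref{theorem72q} (and the appendix) shows that each $E$ in every isogeny class listed in the proposition has multiplicative reduction at $q$ as well, with $v_q(\Delta_E) \in \{1,2,4\}$.

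Second, I would apply the symplectic criterion of Kraus--Oesterl\'e (\cite{HK2002}, as refined in \cite{Fr} and \cite{FrKr1}) at $\ell = q$: if $E$ and $F$ both have multiplicative reduction at $\ell \neq p$ and $\phi : F[p] \to E[p]$ is a $G_{\Q_\ell}$-module isomorphism with $p \nmid v_\ell(\Delta_F)\,v_\ell(\Delta_E)$, then $\phi$ is symplectic if and only if the product $v_\ell(\Delta_F)\,v_\ell(\Delta_E)$ is a square in $\F_p^\times$. Substituting $v_q(\Delta_F) = 2\alpha$ and taking $E$ to be the (a1) representative in each isogeny class (for which $v_q(\Delta_E) = 2$, as the tables show) gives the product $4\alpha$, which is a square modulo $p$ exactly when $\alpha$ is. This establishes the proposition for the distinguished representative in each class.

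Third, to upgrade the conclusion to an arbitrary curve $E'$ in the same isogeny class, I would track, for each of the other representatives listed in Theorems~\ref{theorem18q}--\ref{theorem72q}, the effect of the connecting isogeny $\psi \colon E \to E'$ on the two relevant quantities: the $q$-adic valuation of the minimal discriminant, and the symplectic type of the composition $\psi \circ \phi$. Using the standard fact that an isogeny of degree $n$ (necessarily coprime to $p$, given our lower bound on $p$) scales the Weil pairing by $n$ and hence flips the symplectic type precisely when $n$ is a quadratic non-residue modulo $p$, these two effects must cancel for the uniform conclusion of the proposition to hold.

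The principal technical obstacle will be this last bookkeeping step: verifying, for the isogeny classes that contain curves with $v_q(\Delta_{E'}) \in \{1, 4\}$ (where na\"\i vely the product $v_q(\Delta_F)\,v_q(\Delta_{E'})$ differs from $\alpha$ by a non-square factor such as $2$ or $8$), that the degrees of the relevant isogenies contribute a compensating non-residue. Concretely, for a $2$-isogeny between two Tate curves the $q$-adic discriminant valuations differ by a factor of $2$, and one must check that the symplectic-type flip induced by the factor $2$ in the Weil pairing exactly balances this shift. This is a finite check, class by class, that one can read off directly from the discriminant tables in Section~\ref{Class} and the appendix.
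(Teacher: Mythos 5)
Your first two steps reproduce the paper's proof almost exactly: both $F$ and $E$ have multiplicative reduction at $q$ (since $\nu_q(N_F)=\nu_q(N_E)=1$), one takes the representative of the isogeny class with $\nu_q(\Delta(E))=2$, and \cite[Proposition~2]{KO} applied at $\ell=q$ gives symplecticity if and only if $\nu_q(\Delta(F))\cdot\nu_q(\Delta(E))\equiv 4\alpha$ is a square mod $p$. Two small inaccuracies there: from \eqref{eqn:invariants} one has $\nu_q(\Delta(F))=2\alpha+2p\,\nu_q(C)$, not $2\alpha$ (harmless, since only the class mod $p$ matters); and the condition $p\nmid\alpha$ does not follow from $p\ge q^{2q}$ but from the standard normalization that $\alpha$ may be reduced modulo $p$ by absorbing $q^p$ into $C^p$, so that $1\le\alpha<p$.

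The genuine divergence is your third step. The paper does not attempt to propagate the conclusion to every curve in the isogeny class: its proof simply fixes the representative with $\nu_q(\Delta(E))=2$, and in all later applications (Sections~\ref{apply} and \ref{applications}) only those representatives are used. Moreover, the cancellation you hope to verify does not occur. If $\psi:E\to E'$ is the $2$-isogeny to a curve with $\nu_q(\Delta(E'))=1$, then $r(\psi\circ\phi)=2\,r(\phi)$, so the symplectic type flips exactly when $2$ is a non-residue mod $p$; this is consistent with applying \cite[Proposition~2]{KO} directly to the pair $(F,E')$, which yields ``symplectic iff $2\alpha$ is a square mod $p$'' rather than ``iff $\alpha$ is a square mod $p$.'' In other words, the two effects you want to cancel are guaranteed only to be \emph{compatible} with one another, not to restore the stated conclusion, and the uniform statement for an arbitrary member of the isogeny class is false whenever $\legendre{2}{p}=-1$. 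The correct resolution is the paper's: interpret (and apply) the proposition only for the distinguished curve with $q^2\,\|\,\Delta(E)$ in each class, which exists by inspection of the tables in Section~\ref{Class}. With that reading, your argument is complete after step two and the third step should be deleted.
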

\begin{proof} We have $\nu_q(N_F) = \nu_q(N_E) = 1$, so $q$ is a prime of multiplicative
reduction of both curves. We can always choose $E$ such that $\nu_q(\Delta(E)) = 2$;
moreover, we have $p \nmid \alpha$ and 
$\nu_q(\Delta(F)) = 2\alpha + 2p\nu_q(C)$.
The conclusion now follows from a direct application of
\cite[Proposition~2]{KO} with the prime $q$.
\end{proof}

\subsection{Curves of conductor $18q$}

We  summarize the necessary information about the invariants of the relevant elliptic curves.
\begin{small}
$$
\begin{array}{|c|ccc|ccc|}  \hline
\mbox{Curve} & \nu_2 (c_4) & \nu_2 (c_6) & \nu_2 (\Delta) & \nu_3 (c_4) & \nu_3 (c_6) & \nu_3 (\Delta) \\ \hline
F^{(0)}_{A,B} & 0 & 0 &  2p\nu_2(C) - 8 & 2 + \nu_3 (AB)  & 3 + \nu_3 (A^3-B^3) & 2p\nu_3(C) + 3  \\ \hline
18q.1.a1 \, (b=0) & 0 & 0 & 2a-8 & 3 & \geq 7 & 6 \\
18q.1.a1 \, (b \geq 1) & 0 & 0 & 2a-8 & 2 & 3 & 2b+6 \\
18q.2.a1 & 0 & 0 & 2a-8 & 2 & 3 & 8 \\
18q.3.a1 & 0 & 0 & 2a-8 & 2 & 3 & 2b+6 \\
18q.4.a2 & 0 & 0 & a-6 & 2 & 3 & b+6 \\
18q.5.a2 & 0 & 0 & a-6 & 2 & 3 + \nu_3 (d)  & 3 \\ 
18q.5.b2 & 0 & 0 & a-6 & 4 & 6  + \nu_3(d) & 9 \\ \hline
\end{array}
$$
\end{small}

Suppose $(A,B,C)$ is a non-trivial primitive solution to \eqref{ABC}
and the Frey-Hellegouarch curve $F$ satisfies isomorphism \eqref{iso1}
where $f$ is the newform corresponding to one of the isogeny classes 
$$
18q.1.a, 18q.2.a, 18q.3.a, 18q.4.a, 18q.5.a \mbox{ or }  18q.5.b.
$$
In particular, $F=F^{(0)}_{A,B}$, $C$ is even and $B \equiv -1 \mod{4}$. 
Moreover, there is a $\Gal(\Qbar/ \Q)$-modules isomorphism
$\phi  :  F[p] \to E[p]$,
where $E$ is one of the elliptic curves  
$$
18q.1.a1, 18q.2.a1, 18q.3.a1, 18q.4.a2, 18q.5.a2  \mbox{ or }  18q.5.b2.
$$ 

\vskip2ex
\subsubsection{Applying the criteria at $\ell=2$} \label{multiplicative}
Since $\nu_2(N_E) = 1$ the prime $\ell=2$ is of multiplicative reduction for $E$. From \cite[Proposition~2]{KO} and the valuations given in the preceding table, it follows 
that either $p \nmid a-4$ and
$$
\phi \; \text{is symplectic} \quad \iff  \quad 4-a
\text{ is a square mod } p,
$$
in  case $E = 18q.1.a1, 18q.2.a1$ or $18q.3.a1$,
or that $p \nmid a-6$ and 
$$
\phi \; \text{is symplectic} \quad  \iff  \quad 12-2a
\text{ is a square mod } p,
$$
in the other cases.

\vskip2ex
\subsubsection{Applying the criteria at $\ell=3$} \label{inertia4At3}
We first consider $E$ one of $18q.1.a1$ with $b=0$,
$18q.5.a2$ or $18q.5.b2$. We have that the corresponding $j$-invariant satisfies $\nu_3(j_E) > 0$,  and hence $E$ has potentially good reduction at $3$. 
Indeed, for $E = 18q.1.a1$ (with $b=0$), we have $\nu_3 (\Delta(E)) =6$ and 
$\nu_3(c_6(E)) \geq 7$ so that,  from \cite[p. 356]{Kraus1990}, we conclude that $e(E,3)=2$.

For $E=18q.5.a2$ and $E=18q.5.b2$, we have $\nu_3(\Delta(E)) \in \{ 3, 9 \}$ and
the results of  \cite[p. 356]{Kraus1990} imply that $e(E,3) \in \{ 4, 12 \}$;
furthermore, since $\nu_3(N_E) =  2$ we are in a case 
of tame reduction, whence $e(E,3) = 4$. On the other hand, for our Frey-Hellegouarch 
curve $F$ to have potentially good reduction at $3$, we require that $3 \nu_3 (c_4(F)) \geq \nu_3 (\Delta (F))$, or, equivalently, $\nu_3 (C)=0$. In this situation, $\nu_3(\Delta(F)) = 3$ and arguing exactly as for the previous curves we also conclude that $e(F,3) = 4$. This contradicts $e(E,3)=2$, and hence $E \neq 18q.1.a1$ (with $b=0$). 
We will now apply \cite[Theorem~1]{FrKr2} with $F$ and 
$E = 18q.5.a2$ or $18q.5.b2$ (with, in both cases, $\delta_1 = \delta_2 = 0$ 
and $a$ even). Let $r$ and $t$ be the quantities defined in the statement of that theorem.
We have, since $3 \nmid C$,
$$
 \nu_3(\Delta(F)) = \nu_3(\Delta(18q.5.a2)) = 3 \; \mbox{ and } \;
 \nu_3(\Delta(18q.5.b2)) = 9,
$$
whereby $r=0$ if $E = 18q.5.a2$ and $r=1$ if $E = 18q.5.b2$.
Moreover, since $3 \nmid C$ and $a$ is even, we may check that
$\Delta (F)_3 \equiv \Delta (E)_3 \equiv 2 \pmod{3}$, i.e.
$t=0$ for both $E$. Finally, applying \cite[Theorem~1]{FrKr2},
we conclude that $\phi$ is symplectic when $E = 18q.5.a2$ and,
if $E = 18q.5.b2$, then $\phi$ is symplectic if and only if $(3/p) = 1$.

\medskip

We now consider the remaining curves $E$ of conductor $18q$ under consideration. We have, in all cases,
$$
\nu_3(c_4(E)) = 2, \quad \nu_3(c_6(E)) = 3, \quad \nu_3(\Delta (E)) \geq 7 \; \mbox{ and } \; \nu_3(j_E)  < 0,
$$
and hence $E$ has potentially multiplicative reduction at $3$; after a quadratic twist (with corresponding elliptic curve denoted $E_t$) the reduction becomes multiplicative and we have 
$$
\nu_3(\Delta (E_t)) = 
\left\{ 
\begin{array}{l} 
2 \mbox{ if } E = 18q.2.a1, \\
b  \mbox{ if } E = 18q.4.a2 \ (\mbox{with } b \geq 1), \\
2b  \mbox{ if } E = 18q.1.a1 \, (\mbox{and } b \geq 1) \mbox{ or } E=18q.3.a1.  \\
\end{array}
\right.
$$
Furthermore,
$3$ must divide $C$ (since otherwise $F$ would have potentially good reduction) and twisting the Frey curve $F$ by the same element (to obtain $F_t$), we find that
$\nu_3(\Delta (F_t)) = -3 + 2p\nu_3(C)$.

If $E = 18q.1.a1$ with $b \geq 1$ or $E=18q.3.a1$, it follows from \cite[Proposition~2]{KO} applied to $E_t$ and $F_t$ that $p \nmid b$ and
$$
\phi \; \text{is symplectic} \quad \iff  \quad -6b
\text{ is a square mod } p.
$$
Similarly, if $E = 18q.4.a2$ then $p \nmid b$ and 
$$
\phi \; \text{is symplectic} \quad \iff  \quad -3b
\text{ is a square mod } p.
$$
If $E = 18q.2.a1$, then
$$
\phi \; \text{is symplectic} \quad \iff  \quad -6
\text{ is a square mod } p.
$$

\subsubsection{Conclusions for level $18q$} \label{fool18}
From the calculations above and Proposition~\ref{multiplicativeAtq} we 
can extract the following relations. 
If $E = 18q.1.a1$ or $18.3.a1$ then $b \geq 1$ and 
\begin{equation} \label{sample}
\left( \frac{4-a}{p} \right) = \left( \frac{\alpha}{p} \right)  = \left( \frac{-6b}{p} \right),
\end{equation}
while if $E=18.4.a2$ or $E = 18q.2.a1$, then, respectively, 
$$
\left( \frac{12-2a}{p} \right) = \left( \frac{\alpha}{p} \right)  = \left( \frac{-3b}{p} \right) \quad \text{or} \quad 
\left( \frac{4-a}{p} \right) = \left( \frac{\alpha}{p} \right)  = \left( \frac{-6}{p} \right).
$$
If $E=18.5.a2$, we have that 
$$
 \left( \frac{12-2a}{p} \right) = \left( \frac{\alpha}{p} \right)  = 1.
$$

\subsection{Curves of conductor $36q$}

We next proceed with the case of elliptic curves of conductor $36q$. We encounter the following invariants. 
$$
\begin{array}{|c|ccc|ccc|}  \hline
\mbox{Curve} & \nu_2 (c_4) & \nu_2 (c_6) & \nu_2 (\Delta) & \nu_3 (c_4) & \nu_3 (c_6) & \nu_3 (\Delta) \\ \hline
F^{(1)}_{A,B} & 4 + \nu_2 (A) & 5 &  4 & 2 + \nu_3 (AB)  & 3 + \nu_3 (A^3-B^3) & 2p\nu_3(C) + 3  \\ \hline
36q.1.a2 \, (3 \mid s) & 4 & 6 & 8 & 2 & 4+  \nu_3 (v) & 3 \\
36q.1.a2 \, (3 \not\;\mid s) & 4 & 6 & 8 & 2 & 3 & 3 \\ 
36q.1.b2 \, (3 \mid s) & 4 & 6 & 8 & 4 & 7+  \nu_3 (v) & 9 \\ 
36q.1.b2 \, (3 \not\;\mid s) & 4 & 6 & 8 & 4 & 6 & 9 \\ 
36q.2.a1 \, (3 \mid d) & \geq 6 & 5 & 4 & 2 & 4 + \nu_3 (d) & 3 \\
36q.2.a1 \, ( 3 \not\;\mid d) & \geq 6 & 5 & 4 & \geq 3  & 3  & 3 \\
36q.2.b1 \, (3 \mid d) & \geq 6 & 5 & 4 & 4 & 7 + \nu_3 (d) & 9 \\
36q.2.b1 \, ( 3 \not\;\mid d) & \geq 6 & 5 & 4 & \geq 5  & 6  & 9 \\
36q.3.a1 & \geq 6 & 5 & 4 & 2 & 3 & b+6 \\
\hline
\end{array}
$$

Suppose $(A,B,C)$ is a non-trivial primitive solution to \eqref{ABC}
and the Frey-Hellegouarch curve $F$ satisfies isomorphism \eqref{iso1}
where $f$ is the newform corresponding to one of the isogeny classes
$$
36q.1.a, 36q.1.b, 36q.2.a, 36q.2.b \; \mbox{ or } \; 36q.3.a.
$$
In particular, $F=F^{(1)}_{A,B}$, $C$ is odd and $B \equiv 1 \mod{4}$. 
Moreover, there is a $\Gal(\Qbar/ \Q)$-modules isomorphism
$\phi : F[p] \to E[p]$,
where $E$ is one of the elliptic curves  
$$
36q.1.a2, \; 36q.1.b2, \; 36q.2.a1, \; 36q.2.b1 \; \mbox{ or } \; 36q.3.a1.
$$ 

\vskip2ex
\subsubsection{Applying the criteria at $\ell=2$}  \label{inertia2}
The table shows that $\nu_2(j(E)) > 0$ for all $E$, so that
the curves have potentially good reduction. Since $\nu_2(N_E) = 2$
the reduction is tame and hence $e(E,2) = 3$ for all $E$.

We will now apply \cite[Theorem~1]{FrKr1} at $\ell = 2$ with $F$ and
$E=36q.1.a2$ or $36q.1.b2$. Let $t$ and $r$ be as in that theorem. 
Since $\nu_2(\Delta(F)) = 4$ and $\nu_2(\Delta(E)) = 8$, we have $r=1$ for both $E$.
Now, to determine the value of $t$, we must first appeal to \cite[Theorem~3]{FrKr1}.
Indeed, the curve $36q.1.a2$ has
$$
c_4(E)_2 = 3 \left( 16 \left(  \frac{3v^2-1}{2} \right)^2 - 1 \right)
 \; \mbox{ and } \; c_6 (E)_2 =  - 3^2 u v \left( 32 \left(  \frac{3v^2-1}{2} \right)^2 + 1 \right),
$$
while for $36q.1.b2$, 
$$
c_4(E)_2  = 3^3 \left( 16 \left(  \frac{3v^2-1}{2} \right)^2 - 1 \right)
 \; \mbox{ and } \; c_6 (E)_2 =  3^5 u v \left( 32 \left(  \frac{3v^2-1}{2} \right)^2 + 1 \right).
$$
We thus have, respectively,
$$
c_4(E)_2  \equiv 13 \mod{32} \; \mbox{ and } \; c_6 (E)_2 \equiv 7 u v \mod{16},
$$
and
$$
c_4(E)_2  \equiv 21 \mod{32} \; \mbox{ and } \; c_6 (E)_2 \equiv 3 u v \mod{16}.
$$
Since $u^2-3v^2=-2$ with $u \equiv v \equiv 1 \mod{4}$, the $u$ and $v$ are terms in binary recurrence sequences and we may readily prove by induction that $uv \equiv 1 \mod{16}$, whereby it follows from \cite[Theorem~3]{FrKr1} that
the curve $36q.1.a2$ has a $3$-torsion point over $\mathbb{Q}_2$, while $36q.1.b2$ does not. 
For $F=F^{(1)}_{A,B}$, since $\nu_2(A) \geq 2$, we have 
\[ \nu_2(c_4(F)) \geq 6 \quad \text{ and } \quad c_6 (F)_2 = 3^3(A^3-B^3) \equiv -3 B \mod{8},\]
whereby, from  part (B2) of \cite[Theorem~3]{FrKr1}, $F$ has a $3$-torsion point over $\mathbb{Q}_2$ 
precisely when we have $B \equiv 1 \mod{8}$.
We thus conclude that, if $B \equiv 1 \mod{8}$ and $E=36q.1.a2$
or $B \equiv 5 \mod{8}$ and $E=36q.1.b2$, then $t=0$ and
$$
\phi \; \text{is symplectic} \quad \iff  \quad 2 \text{ is a square mod } p.
$$
If $B \equiv 5 \mod{8}$ and $E=36q.1.a2$, or $B \equiv 1 \mod{8}$ and $E=36q.1.b2$, then $t=r=1$ and so
$$
\phi \; \text{is symplectic} \quad \iff  \left( \frac{2}{p} \right) = \left( \frac{3}{p} \right).
$$
Next, suppose that $E$ is one of $36q.2.a1, 36q.2.b1$ or $36q.3.a1$, so that we always have $r=0$.
Then
$$
c_6 (E)_2 = -3^3 d \frac{(d^2+3)}{4} \equiv 
\begin{cases}
5 \mod{8} \mbox{ if } d \equiv 1 \mod{16}, \\
1 \mod{8} \mbox{ if } d \equiv 9 \mod{16}, \\
\end{cases}
$$
$$
c_6 (E)_2 = 3^6 d \frac{(d^2+3)}{4} \equiv 
\begin{cases}
5 \mod{8} \mbox{ if } d \equiv 9 \mod{16}, \\
1 \mod{8} \mbox{ if } d \equiv 1 \mod{16}, \\
\end{cases}
$$
and
$$
c_6 (E)_2  = -3^3 d \frac{(d^2+3^{b+2})}{4} \equiv 
\begin{cases}
5 \mod{8} \mbox{ if } q \equiv d+2 \mod{8}, \\
1 \mod{8} \mbox{ if } q \equiv d-2 \mod{8}, \\
\end{cases}
$$
respectively. If any of
$$
\begin{array}{l}
B \equiv 1 \mod{8}, \; \; E=36q.2.a1 \;  \mbox{ and }  \; d \equiv 1 \mod{16}, \mbox{ or } \\
B \equiv 5 \mod{8}, \; \; E=36q.2.a1 \;  \mbox{ and }  \; d \equiv 9 \mod{16}, \mbox{ or } \\
B \equiv 1 \mod{8}, \; \; E=36q.2.b1 \;  \mbox{ and }  \; d \equiv 9 \mod{16}, \mbox{ or } \\
B \equiv 5 \mod{8}, \; \; E=36q.2.b1 \;  \mbox{ and }  \; d \equiv 1 \mod{16}, \mbox{ or } \\
B \equiv 1 \mod{8}, \; \; E=36q.3.a1 \;  \mbox{ and }  \; q \equiv d+2 \mod{8}, \mbox{ or } \\
B \equiv 5 \mod{8}, \; \; E=36q.3.a1 \;  \mbox{ and }  \; q \equiv d-2 \mod{8}, \\
\end{array}
$$
we therefore have that $\phi$ is always symplectic. If, however,
$$
\begin{array}{l}
B \equiv 1 \mod{8}, \; \; E=36q.2.a1 \;  \mbox{ and }  \; d \equiv 9 \mod{16}, \mbox{ or } \\
B \equiv 5 \mod{8}, \; \; E=36q.2.a1 \;  \mbox{ and }  \; d \equiv 1 \mod{16}, \mbox{ or } \\
B \equiv 1 \mod{8}, \; \; E=36q.2.b1 \;  \mbox{ and }  \; d \equiv 1 \mod{16}, \mbox{ or } \\
B \equiv 5 \mod{8}, \; \; E=36q.2.b1 \;  \mbox{ and }  \; d \equiv 9 \mod{16}, \mbox{ or } \\
B \equiv 1 \mod{8}, \; \; E=36q.3.a1 \;  \mbox{ and }  \; q \equiv d-2 \mod{8}, \mbox{ or } \\
B \equiv 5 \mod{8}, \; \; E=36q.3.a1 \;  \mbox{ and }  \; q \equiv d+2 \mod{8}, \\
\end{array}
$$
then 
$$
\phi \; \text{is symplectic} \quad \iff  \quad 3 \text{ is a square mod } p.
$$

\vskip2ex
\subsubsection{Applying the criteria at $\ell=3$} 
For $E = 36q.3.a1$, we have $\nu_3 (j(E)) < 0$ and so $E$ has potentially multiplicative reduction at $3$. After a suitable quadratic twist (denoted $E_t$) the reduction 
becomes multiplicative and $\nu_3(\Delta (E_t)) = b$. 
Therefore, the twisted Frey curve $F_t$ must also have 
multiplicative reduction at $3$ (since $p \geq 5$)
and it satisfies $\nu_3(\Delta (F_t)) = 2p \nu_3(C)-3$.
Since $p \nmid \nu_3(\Delta (F_t))$, it follows from \cite[Proposition~2]{KO}
that $p \nmid b$ and
$$
\phi \; \text{is symplectic} \quad \iff  \quad -3b \text{ is a square mod } p.
$$

For all other cases of $E$ we have $\nu_3(j(E)) \geq 0$ and $\nu_3(\Delta(E)) \ne 6$, 
whence $E$ has potentially good reduction which does not become good after
a quadratic twist. As before, since $\nu_3(N_E) = 2$ the reduction is tame, whereby $e(E,3)=4$.
A similar argument guarantees that $e(F,3) =4$ when $3 \nmid C$, in which case, $\nu_3(\Delta(F)) = 3$ 
and $\Delta(F)_3 \equiv 2 \pmod{3}$. To apply \cite[Theorem~1]{FrKr2} at $\ell=3$ with $F$ and
each of the curves $E=36q.1.a2, 36q.1.b2, 36q.2.a1$ or $36q.2.b1$, we first compute that
$(r,t)=(0,1), (1,1), (0,0)$  and $(1,0)$, respectively. We conclude that
if $E=36q.2.a1$ then $\phi$ is symplectic, while, if $E=36q.1.a2$, 
$$
\phi \; \text{is symplectic} \quad \iff  \quad 2 \text{ is a square mod } p.
$$
If $E=36q.1.b2$, then
$$
\phi \; \text{is symplectic} \quad \iff  \left( \frac{2}{p} \right) = \left( \frac{3}{p} \right)
$$
and if $E=36q.2.b1$, then
$$
\phi \; \text{is symplectic} \quad \iff  \quad 3 \text{ is a square mod } p.
$$

\subsubsection{Conclusions for level $36q$} \label{fool36}
From the calculations above and Proposition~\ref{multiplicativeAtq} 
we can extract the following relations. 
If $E=36q.1.a2$ and $B \equiv 1 \mod{8}$, we have
$$
\left( \frac{\alpha}{p} \right)  = \left( \frac{2}{p} \right) 
$$
while $E=36q.1.a2$ and $B \equiv 5 \mod{8}$ implies that either
$$
\left( \frac{\alpha}{p} \right)  = \left( \frac{2}{p} \right)  = \left( \frac{3}{p} \right)  = 1, \; \mbox{ or } \; \left( \frac{\alpha}{p} \right)  = \left( \frac{2}{p} \right)  = -1, \;  \left( \frac{3}{p} \right)  = 1.
$$
If $E=36q.1.b2$ and $B \equiv 1 \mod{8}$, we have either
$$
\left( \frac{\alpha}{p} \right)  = 1, \;  \left( \frac{2}{p} \right)  = \left( \frac{3}{p} \right), \; \mbox{ or } \; \left( \frac{\alpha}{p} \right)  = -1, \; \left( \frac{2}{p} \right)  \neq \left( \frac{3}{p} \right).
$$
If $E=36q.1.b2$ and $B \equiv 5 \mod{8}$, we either have 
$$
\left( \frac{\alpha}{p} \right)  = \left( \frac{2}{p} \right) = \left( \frac{3}{p} \right) =
1\; \mbox{ or } \;  \left( \frac{\alpha}{p} \right)  = \left( \frac{2}{p} \right) = -1, \; \left( \frac{3}{p} \right) = 1.
$$
If $E=36q.2.a1$ and either $B \equiv 1 \mod{8}$, $d \equiv 1 \mod{16}$, or $B \equiv 5 \mod{8}$, $d \equiv 9 \mod{16}$, we have 
$$
\left( \frac{\alpha}{p} \right)  = 1.
$$
If $E=36q.2.a1$ and either $B \equiv 1 \mod{8}$, $d \equiv 9 \mod{16}$, or $B \equiv 5 \mod{8}$, $d \equiv 1 \mod{16}$, we have 
$$
\left( \frac{\alpha}{p} \right)  = \left( \frac{3}{p} \right)  = 1.
$$
If $E=36q.2.b1$ and either $B \equiv 1 \mod{8}$, $d \equiv 9 \mod{16}$, or $B \equiv 5 \mod{8}$, $d \equiv 1 \mod{16}$, we have, again,
$$
\left( \frac{\alpha}{p} \right)  = \left( \frac{3}{p} \right)  = 1,
$$
while, if $E=36q.2.b1$ and either $B \equiv 1 \mod{8}$, $d \equiv 1 \mod{16}$, or $B \equiv 5 \mod{8}$, $d \equiv 9 \mod{16}$, we have
$$
\left( \frac{\alpha}{p} \right)  = \left( \frac{3}{p} \right)  = 1. 
$$
If $E=36q.3.a1$ and either $B \equiv 1 \mod{8}$, $q \equiv d+2 \mod{8}$, or $B \equiv 5 \mod{8}$, $q \equiv d-2 \mod{8}$, we have
$$
\left( \frac{\alpha}{p} \right)  = \left( \frac{-3b}{p} \right)  = 1,
$$
while, if $E=36q.3.a1$ and either $B \equiv 1 \mod{8}$, $q \equiv d-2 \mod{8}$, or $B \equiv 5 \mod{8}$, $q \equiv d+2 \mod{8}$, we have that 
$$
\left( \frac{\alpha}{p} \right)  = \left( \frac{3}{p} \right)  =  \left( \frac{-3b}{p} \right).
$$

\subsection{Curves of conductor $72q$}

We have the following data.

$$
\begin{array}{|c|ccc|ccc|}  \hline
\mbox{Curve} & \nu_2 (c_4) & \nu_2 (c_6) & \nu_2 (\Delta) & \nu_3 (c_4) & \nu_3 (c_6) & \nu_3 (\Delta) \\ \hline
F^{(1)} _{A,B} & 5 & 5 &  4 & 2 + \nu_3 (AB)  & 3 + \nu_3 (A^3-B^3) & 2p\nu_3(C) + 3  \\ \hline
72q.1.a1 & 4 & 6 & 8 & 2 & 3 & 2b+6 \\
72q.2.a1 & 4 & 6 & 8 \mbox{ or } 10 & 2 & 3 & 2b+6 \\
72q.3.a1 & 4 & 6 & 8 \mbox{ or } 10  & 2 & 3 & 2b+6 \\
72q.4.a2 \, (3 \mid d) & 4 & 6 & 8  & 2 & 4+ \nu_3 (d) & 3 \\
72q.4.a2 \, (3 \not\;\mid d) & 4 & 6 & 8  & \geq 3 & 3 & 3 \\
72q.4.b2 \, (3 \mid d) & 4 & 6 & 8  & 4 & 7+ \nu_3 (d) & 9 \\
72q.4.b2 \, (3 \not\;\mid d) & 4 & 6 & 8  & \geq 5 & 6 & 9 \\
72q.5.a2 \, (3 \mid d) & 4 & 6 & 10 & 2 & 4+ \nu_3 (d) & 3 \\
72q.5.a2 \, (3 \not\;\mid d) & 4 & 6 & 10 & \geq 3 & 3 & 3 \\
72q.5.b2 \, (3 \mid d) & 4 & 6 & 10 & 4 & 7+ \nu_3 (d) & 9 \\
72q.5.b2 \, (3 \not\;\mid d) & 4 & 6 &  10 & \geq 5 & 6 & 9 \\
72q.6.a1 \, (3 \mid d)  & 5 & 5 & 4 & 2 & 4 + \nu_3 (d)  & 3 \\ 
72q.6.a1 \, (3 \not\;\mid d)  & 5 & 5 & 4 & \geq 3 & 3  & 3 \\ 
72q.6.b1 \, (3 \mid d)  & 5 & 5 & 4 & 4 & 7  + \nu_3(d) & 9 \\
72q.6.b1 \, (3 \not\;\mid d)  & 5 & 5 & 4 & \geq 5 & 6 & 9 \\
72q.7.a1 & 5 & 5 & 4 & 2 & 3 & b+6 \\
72q.8.a2 & 4 & 6 & 8 & 2 & 3 & b+6 \\
72q.9.a2 & 4 & 6 & 10 & 2 & 3 & b+6 \\ \hline
\end{array}
$$

Suppose $(A,B,C)$ is a non-trivial primitive solution to \eqref{ABC}
and the Frey-Hellegouarch curve $F$ satisfies isomorphism \eqref{iso1}
where $f$ is the newform corresponding to one of the isogeny classes 
$$
\begin{array}{c}
72q.1.a, 72q.2.a, 72q.3.a, 72q.4.a, 72q.4.b, 72q.5.a, 72q.5.b, \\
72q.6.a, 72q.6.b, 72q.7.a, 72q.8.a  \mbox{ or }  72q.9.a. \\
\end{array}
$$
In particular, for this case we have $F=F_{A,B}^{(1)}$,
$$
 C \text{ is odd}, \qquad A \equiv 2 \mod{4} \; \; \mbox{ and } \; \;  B \equiv 1 \pmod{4},
$$
and there is a $\Gal(\Qbar/ \Q)$-module isomorphism
$$
  \phi \; : \; F[p] \to E[p],
$$
where $E$ is one of the elliptic curves labelled 
$$
\begin{array}{c}
72q.1.a1, 72q.2.a1, 72q.3.a1, 72q.4.a2, 72q.4.b2, 72q.5.a2, 72q.5.b2, \\
72q.6.a1,  72q.6.b1, 72q.7.a1, 72q.8.a2  \mbox{ or }  72q.9.a2. \\
\end{array}
$$ 

\vskip2ex
\subsubsection{Applying the criteria at $\ell=2$} 
Note that all the curves in the preceding table  
have potentially good reduction at $\ell=2$ since their $j$-invariants satisfy $\nu_2(j) \geq 0$. 
We see, from \cite[p. 358]{Kraus1990}, that the Frey curve $F$ satisfies $e(F, 2)=24$; 
the same is immediately seen to be true also for $E$ satisfying
$$
( \nu_2 (c_4(E)), \nu_2 (c_6(E)), \nu_2 (\Delta(E))) \in \{ (4, 6, 10), (5,5,4) \}. 
$$
For the curves $E$ in the table with 
$$
( \nu_2 (c_4(E)), \nu_2 (c_6(E)), \nu_2 (\Delta(E)))=(4, 6, 8), 
$$
we further check that $\Delta (E)_2 \equiv 1 \mod{4}$ and hence we also have
$e(E, 2)=24$.
We may therefore, in all cases, apply \cite[Theorem~4]{Fr} to find that,
 if $\nu_2 (\Delta (E)) \in \{ 4, 10 \}$, then $\phi$ is always symplectic, while,  if $\nu_2 (\Delta (E)) =8$, then
$$
\phi  \; \text{is symplectic} \quad \iff  \quad 2 \text{ is a square mod } p.
$$

\vskip2ex

\subsubsection{Applying the criteria at $\ell=3$}  
If $E=72q.1.a1$, $72q.2.a1$, $72q.3.a1$, $72q.7.a1$, $72q.8.a2$ or $72q.9.a2$,  then $E$ has potentially multiplicative reduction at $3$ and so, after a suitable quadratic twist (denoted $E_t$) the reduction 
becomes multiplicative and $\nu_3(\Delta (E_t)) = b$ or $2b$. 
Therefore, $3 \mid C$ and the twisted Frey curve $F_t$ must also have 
multiplicative reduction at $3$ 
and satisfy $\nu_3(\Delta (F_t)) = 2p \nu_3(C)-3$.
Since $p \nmid \nu_3(\Delta (F_t))$, it follows from \cite[Proposition~2]{KO} 
that $p \nmid b$ and
$$
\phi \; \text{is symplectic} \quad \iff  \quad -3b \text{ is a square mod } p,
$$
for $E=72q.7.a1$, $72q.8.a2$  and $72q.9.a2$, while
$$
\phi  \; \text{is symplectic} \quad \iff  \quad -6b \text{ is a square mod } p,
$$
for $E=72q.1.a1$, $72q.2.a1,$ and $72q.3.a1$.

\smallskip

For the curves $E=72q.4.a2$, $72q.4.b2$, $72q.5.a2$, $72q.5.b2$, $72q.6.a1$ or $72q.6.b1$,   
the reduction at $\ell=3$ is potentially good and tame 
(because $\nu_3(N_E) = 2$) and since $\nu_3(\Delta(E)) \ne 6$ we have $e(E,3)=4$.
As before, it follows that $e(F,3)=4$ (so that $3 \nmid C$), and we may apply \cite[Theorem~1]{FrKr2}. 
Let $r$ and $t$ be as in that theorem. In all cases we have $t=0$; 
furthermore, we have $r=0$ for $E=72q.4.a2$, $E=72q.5.a2$ or $E=72q.6.a1$, and $r=1$ for $E=72q.4.b2$, $E=72q.5.b2$  or $E=72q.6.b1$. It follows that $\phi$ is always symplectic in the first cases, while
$$
\phi \; \text{is symplectic} \quad \iff  \quad 3 \text{ is a square mod } p,
$$
in the latter three.

\subsubsection{Conclusions for level $72q$} \label{fool72}
From the calculations above we extract the following relations. 
For $E=72q.1.a1$, or either of $E=72q.2.a1$ or $E=72q.3.a1$ with $a=2$, it follows that
$$
\left( \frac{\alpha}{p} \right)  = \left( \frac{2}{p} \right)  =  \left( \frac{-6b}{p} \right),
$$
while, for $E=72q.2.a1$  or $E=72q.3.a1$ with $a=3$, 
$$
\left( \frac{\alpha}{p} \right)   =  \left( \frac{-6b}{p} \right) =1.
$$
If $E=72q.4.a2$, we have 
$$
\left( \frac{\alpha}{p} \right)   =  \left( \frac{2}{p} \right) =1,
$$
while $E=72q.5.a2$  or $E=72q.6.a1$ give 
$$
\left( \frac{\alpha}{p} \right)  =1.
$$
Taking $E=72q.4.b2$  yields
$$
\left( \frac{\alpha}{p} \right)  = \left( \frac{2}{p} \right)  =  \left( \frac{3}{p} \right),
$$
while $E=72q.5.b2$  or $E=72q.6.b1$  give
$$
\left( \frac{\alpha}{p} \right)   =  \left( \frac{3}{p} \right) =1.
$$
If $E=72q.7.a1$ or $72q.9.a2$, we have
$$
\left( \frac{\alpha}{p} \right)   =  \left( \frac{-3b}{p} \right) =1.
$$
Finally, if $E=72q.8.a2$, 
$$
\left( \frac{\alpha}{p} \right)  = \left( \frac{2}{p} \right)  =  \left( \frac{-3b}{p} \right).
$$

\section{Some applications of symplectic criteria} \label{applications}

As the preceding section reveals, there are many results we could state now for the various families of primes $S_i$ comprising the set $S_0$. For simplicity, we limit ourselves to the three statements we have mentioned in our introduction (Theorems \ref{shark99}, \ref{main3} and \ref{main4}) and one result valid for small values of $q$ (Theorem \ref{rump}). 

\medskip

\subsection{Proof of Theorem~\ref{shark99}}
If $q \not\in S_0$, the desired conclusion is immediate from Theorem \ref{main1}. Suppose, then, that $q \in S_0 \setminus T$ and that there exists a solution to (\ref{gump}) in coprime nonzero integers $A, B$ and $C$ and prime $p \geq q^{2q}$. In particular, we note, without further mention, that the primes $p$ under consideration all satisfy $\gcd (p,6q)=1$. Also, we have that $p \nmid (4-a) b$, whenever these parameters appear in the sequel.
From Section~\ref{Frey} and Theorem~\ref{main1}, it follows there exists an isomorphism
$\phi : F[p] \to E[p]$, where $F$ is the Frey-Hellegouarch curve and $E$ is one of the curves 
in Corollary~\ref{wonder}. Since $\alpha = 1$, we see from Proposition~\ref{multiplicativeAtq}
that $\phi$ is symplectic. Furthermore, the shape of the primes in $S_7$ 
implies that $7, 19 \in S_7$ and $E$ does not correspond to the isogeny classes
$36.q.2.a$, $36.q.2.b$, $72.q.5.a$ or $72.q.5.b$. In conclusion,  
we need to consider $E$ in the remaining conjugacy classes; in particular, we 
can either take $E$  isogenous to one of
$$
90c, \; 306c, \; 360a, \; 360d, \; 936d \; \mbox{ or } \; 5256e,
$$
whereby $q \in \{ 5, 13, 17, 73 \}$, or $E$ isomorphic 
to one of the following curves: 
$$
\begin{array}{c}
18q.1.a1, 18q.2.a1, 18q.3.a1, 18q.4.a2 \; (\mbox{with } \delta_1=\delta_2=0, a \mbox{ even}, b \mbox{ odd}),  18.q.5.a2  \mbox{ or } \\
18q.5.b2 \; (\mbox{with, in both cases, } \delta_1=\delta_2=0 \mbox{ and } a \mbox{ even}), 36q.1.a2, 36q.1.b2,  36q.3.a1,  \\
 72q.1.a1, 72q.2.a1, 72q.3.a1, 72.q.4.a2, 72q.4.b2, 72q.7.a1, 72q.8.a2 
\mbox{ or }  \\ 72q.9.a2 \; (\mbox{with } \delta_1=\delta_2=0, a =4).\\
\end{array}
$$
For $q  \leq 73$,  the desired conclusion will follow immediately from our Theorem \ref{rump}, which we will prove later in this section.
For the remaining  possible types for $q$, we will place a number of conditions upon $p$ to guarantee that, in each case, $\phi$ is anti-symplectic, providing the desired contradiction. These conditions will be of the form 
$\left( \frac{\kappa_i}{p} \right)   = -1$, for, in each case, a finite collection of  integers $\kappa_i$, and hence are each equivalent to $p$ lying in certain residue classes modulo $8 |\kappa_i|$. We remind the reader that a given prime $q$ has at most finitely many (isogeny classes of) curves $E$ associated to it. This will prove Theorem~\ref{shark99} provided we can show that these conditions are compatible, i.e. that we do not have three distinct indices $i$, say $i=1, 2$ and $3$, with $\kappa_1 \kappa_2 \kappa_3$ an integer square. In particular, compatibility is immediate if we have $\kappa_i$ negative for each $i$. Our goal will be to show that, for a given prime in $q \in S_0 \setminus T$, we can always find a corresponding set of $\kappa_i$ with either

\hskip3ex (i) $\kappa_i$ negative for all $i$, or 

\hskip3ex (ii) $\kappa_i$ either positive and $\kappa_i \equiv 2 \mod{4}$, or $\kappa_i$ negative  and odd, or

\hskip3ex (iii) $\kappa_i \equiv 2 \mod{4}$ for all $i$.

Combining the conclusions of subsections \ref{fool18}, \ref{fool36} and \ref{fool72}, we can choose $\kappa_i$ for which we require $\left( \frac{\kappa_i}{p} \right)   = -1$, to contradict the fact that $\phi$ is symplectic, as follows.
$$
\begin{array}{cc|cc}
E & \kappa_i & E & \kappa_i  \\ \hline
18q.1.a1 &  4-a \mbox{ or } -6b & 72q.1.a1 & 2 \mbox{ or } -6b \\ 
18q.2.a1 &  4-a \mbox{ or } -6 & 72q.2.a1 & -6b \\
18q.3.a1 & 4-a \mbox{ or } -6b & 72q.3.a1 &  -6b\\
18q.4.a2  & 12-2a \mbox{ or } -3b & 72q.4.a2 &  2 \\
18q.5.a2 &  12-2a & 72q.4.b2  & 2 \mbox{ or }  3 \\
18q.5.b2  &  12-2a & 72q.7.a1 & -3b \\
36q.1.a2 & 2 & 72q.8.a2  &  2 \mbox{ or } -3b \\
36q.1.b2 & 6 & 72q.9.a2  &  -3b \\
36q.3.a1 & -3b &  &  \\
\end{array}
$$
Here, the integers $a$ and $b$ are as given in the definitions of the curves $E$ in Section \ref{Class}. It is important to remember that, for a given $q$ and corresponding type of curve $E$, we have not ruled out the possibility of there being more than one non-isogenous curve involved. As example  (\ref{ex99}) illustrates, there can certainly be non-isogenous curves associated to a fixed pair $(q,E)$; in the case of (\ref{ex99}), neither curve of the shape $E=18q.4.a$ satisfies $a \equiv 0 \mod{2}, b \equiv 1 \mod{2}$.

From the preceding table,  the only cases where we cannot choose $\kappa_i$ to be negative are the primes $q$ corresponding to $E=36q.1.a2$, $36q.1.b2$, $72q.4.a2$ or $72q.4.b2$. The first two of these require $q \in S_8$, while the latter two arise from $q \in S_5$ of the form $q=3d^2+4$ for integer $d$. In each of these cases, we can choose $\kappa_i \equiv 2 \mod{4}$ positive (see the table above). 

To conclude the proof of Theorem \ref{shark99}, we need to show that for each $q$ which can possibly correspond to any of 
$$
E \in E_0 = \left\{ 36q.1.a2, \; 36q.1.b2, \; 72q.4.a2\, \; 72q.4.b2 \right\},
$$
if we have a solution to (\ref{gump})  that  is  associated to some $E \not\in E_0$, then we can eliminate a positive proportion of prime exponent $p$ by requiring that $\left( \frac{\kappa_i}{p} \right)   = -1$ for $\kappa_i$ either positive and $\kappa_i \equiv 2 \mod{4}$, or $\kappa_i$ negative  and odd (as in case (ii) of the preceding page), or all $\kappa_i \equiv 2 \mod{4}$ (as in case (iii)). In particular, we need to start by understanding $S_8 \cap S_i$ and $\{ q  \mbox{ prime} : q=3d^2+4 \} \cap S_i$; Proposition \ref{intersect} is a good place to begin.

 Suppose first that  $q \in S_5$ and $q = 3d^2+4$ for an odd integer $d$. Modulo $8$, we cannot have $q=3d_1^2 + 2^\alpha$ for integer $d_1$ and $\alpha \geq 4$. Further, applying Proposition \ref{intersect}, we have that $q \not\in S_i$ for each $i \in \{ 1, 3, 4, 8 \}$.  Since $q \in S_0 \setminus T$, we have that $q \not\in S_7$. If $q \in S_2$, then $q = (-1)^{\delta_1} 2^{a_2} + (-1)^{\delta_2}  3^{b_2}$, for $a_2 \in \{ 2, 3 \}$ or $a_2 \geq 5$,  and $b_2 \geq 1$. Modulo $3$, $\delta_1 \equiv a_2 \mod{2}$, while, modulo $8$, either $a_2=2$, $\delta_1=\delta_2=0$, $b_2 \equiv 1 \mod{2}$ and $d = 3^{(b_2-1)/2}$, or we have $a_2 \geq 3$, $b_2 \equiv 0 \mod{2}$ and $\delta_2=1$. In this latter case, we also have $\delta_1=0$, $a_2 \equiv 0 \mod{2}$ and hence 
$$
2^{a_2/2}-3^{b_2/2} = 1 \; \; \mbox{ and } \; \; 2^{a_2/2}+3^{b_2/2} = q.
$$
Since $a_2 \geq 6$, this first equation has no solutions (by an old result of Levi ben Gerson), whereby it follows that if $q = 3d^2+4 \in S_2$, then $d = 3^{(b_2-1)/2}$. If, further,  $q = 3d^2+4 \in S_2 \cap S_6$, then
$$
q=3^{b_2}+4 = \frac{d_6^2 + 3^{b_6}}{4},
$$
for odd positive integers $b_2, d_6$ and $b_6$, so that
\begin{equation} \label{old}
d_6^2 = 4 \cdot 3^{b_2} - 3^{b_6} +16.
\end{equation}
In general, this equation has precisely the solutions 
$$
(d_6,b_2,b_6) = (1,1,3),  \, (5,1,1), \,  (11,3,1) \; \mbox{ and } \; (31,5,3)
$$
in odd positive integers; none of these correspond to a prime values of $q > 73$. To prove this, note that an elementary argument easily yields that $b_2 > b_6$ unless $|d_6| \leq 5$. We may thus write $d_6 = 3^{b_6} \cdot k_1 + (-1)^\delta 4$, for some $\delta \in \{ 0, 1 \}$ and $k_1 \equiv \pm 1 \mod{6}$ a positive integer. Substituting into (\ref{old}), we have
$$
3^{b_6} k_1^2 + (-1)^\delta 8 \cdot k_1 = 4 \cdot 3^{b_2-b_6} - 1.
$$
If $k_1=1$, then, modulo $3$, we have $3^{b_6-1} +  3  = 4 \cdot 3^{b_2-b_6-1}$, corresponding to $(d_6,b_2,b_6) =(31,5,3)$. If $k_1 > 1$ then $k_1 \geq 5$ and necessarily $b_2 > 2 b_6$. It follows that we can write $(-1)^\delta 8 \cdot k_1+1 = 3^{b_6} \cdot k_2$ for a (nonzero) integer $k_2 \equiv 3 \mod{8}$, so that
\begin{equation} \label{healthy}
\left( 3^{b_6} k_2 -1 \right)^2 + 64 k_2  = 256 \cdot 3^{b_2-2 b_6}.
\end{equation}
We check that the only solution to this equation with $k_2 \in \{ -13, -5, 3, 11 \}$ corresponds to $(d_6,b_2,b_6) = (11,3,1)$; otherwise, after a little work, we may suppose that $b_2 > 4 b_6$ and hence that
$-2 k_2 3^{b_6} + 64 k_2 +1$ is divisible by $3^{2 b_6}$ (and hence $|2 k_2 3^{b_6} - 64 k_2 -1| \geq 3^{2b_6}$). It follows that either $b_6 \leq 3$, or that we have $|k_2| > 3^{b_6-1}$. From (\ref{healthy}), after a little more work, we may thus conclude that either $b_6 \in  \{ 1, 3 \}$, or that $b_2 \geq 6 b_6 -7$.

On the other hand, applying Theorem 1.5 of \cite{BaBe}, with (in the notation of that theorem)
$$
(a,y,x_0,m_0,\Delta,\alpha,s) = (1, 3, 3788, 15, -37, 3.1, 2),
$$
we find that
$$
\left| \sqrt{3} - \frac{p}{2 \cdot 3^k} \right| > e^{-170} \, 3^{-1.64281 k},
$$
for $p$ and $k$ positive integers with $k \geq 4775$. It follows that
$$
\left| p^2 - 4\cdot 3^{2k+1} \right| >  4 \cdot e^{-170} \; 3^{0.35719 k},
$$
provided $k \geq 4775$. Applying this with $p=d_6$ and $b_2=2k+1$, (\ref{old}) thus implies that either $b_6 \in  \{ 1, 3 \}$ or we have
$$
3^{(b_2+7)/6} \geq 3^{b_6} > 4 \cdot e^{-170} \; 3^{0.35719 (b_2-1)/2},
$$
whence $b_2 \leq 12979$. A brute-force search confirms that (\ref{old}) has only the listed solutions.

In conclusion, then, if $q=3d^2+4$ for integer $d$, we have one of the following situations. Either $q \not \in S_i$ for $i \neq 5$ and hence corresponds to only $E=72q.4.a2$ and $E=72q.4.b2$, or $q \in S_2 \cap S_5$ corresponds to precisely  
$$
E \in \left\{ 72q.3.a1, 72q.4.a2, 72q.4.b2 \right\},
$$
or $q \in S_5 \cap S_6$, with 
$$
E \in \left\{ 36q.3.a1, 72q.4.a2, 72q.4.b2 \right\},
$$
for, possibly, several different curves of the shape $36q.3.a1$, depending on the number of distinct ways to represent $q =  \frac{d_{6,i}^2+3^{b_i}}{4}$, for integers $d_{6,i}$ and $b_i$.
In the first case, we deduce a contradiction for every $p$ with $\left( \frac{2}{p} \right)   = -1$. In the second, we require
$$
\left( \frac{2}{p} \right)   = \left( \frac{-6b}{p} \right)   = -1,
$$
where $q=3^b  +4$ (and $b$ is odd). Finally, for the third case, we suppose that
$$
\left( \frac{2}{p} \right)   = \left( \frac{-3b_1}{p} \right)   = \cdots  =  \left( \frac{-3b_t}{p} \right)  =  -1,
$$
where 
$$
q = 3d^2+4 =  \frac{d_{6,i}^2+3^{b_i}}{4}, \; \mbox{ for }  i=1, \ldots, t.
$$
Here, the exponents $b_i$ are necessarily odd. In each case, these choices for $p$ contradict the fact that the $p$-torsion of our Frey-Hellegouarch curve $F$ is symplectically isomorphic to that of  $E$.

Let us next suppose that $q \in S_8$ with $q > 73$. From Proposition \ref{intersect}, it follows that $q \not\in S_i$, for $i \in \{ 1, 3, 5, 7 \}$.
By definition, there exist integers $u$ and $v$ such that  $q= (3v^2-1)/2$ where $u^2-3v^2=-2$. The positive integers $v = v_k$ satisfying this latter equation also satisfy the binary recurrence 
\begin{equation} \label{recur}
v_{k+1} = 4 v_k - v_{k-1}, \; \mbox{ for } \; k \geq 1, \; \mbox{ where } \; v_0=1, v_1 = 3.
\end{equation}
In particular, we have that $v_k \equiv 0 \mod{3}$ precisely when $k \equiv 1 \mod{4}$. For such $k$, we may readily show via induction that $v_k \equiv \pm 3 \mod{13}$ and hence that $3 v_k^2-1 \equiv 0 \mod{13}$. It follows that, in order to have $q=(3v^2-1)/2$ prime with $u^2-3v^2=-2$ for some integer $u$, we require that either $q=13$, or that $v \equiv \pm 1 \mod{3}$ (whereby $q \equiv 1 \mod{9}$).  If, for our $q \in S_8$ with $q > 73$, we have $q =2^a+3^b$ for integers $a \geq 2$ and $b \geq 1$, then, modulo $4$, $b$ is necessarily even, so that we require $2^a \equiv 1 \mod{9}$, whence $a \equiv 0 \mod{6}$. It follows that $2^a+3^b \equiv 2, 3$ or $5 \mod{7}$. On the other hand, again from considering the recursion (\ref{recur}), we find that $q \equiv \pm 1 \mod{7}$, a contradiction. If, instead, we have $q=2^a -3^b$, for $a \geq 2$ and $b \geq 1$, then, modulo $12$, $a$ is even and $b$ is odd. If $b=1$, then we have $2^{a+1} = 3 v^2 +5$ and so, from Lemma \ref{frosty}, since $q > 73$, a contradiction. If we suppose that $b \geq 2$, then, modulo $9$, we again require that $a \equiv 0 \mod{6}$, so that
$2^a - 3^b \equiv 0, 3, 5, 9$ or $11 \mod{13}$. On the other hand, from (\ref{recur}), we have that $q \equiv \pm 1 \mod{13}$, a contradiction.

It follows that,  if $q \in S_2 \cap S_8$ with $q > 73$, then there exist  integers $a \geq 2$ and $b \geq 1$, with $q= 3^b-2^a$. Arguing as previously, modulo $2^2 \cdot 3^3 \cdot 7$, we find that $a \equiv 3 \mod{6}$ and $b \equiv 2 \mod{6}$. Working modulo $73$, we find from (\ref{recur}) that $q \equiv \pm 1, \pm 34, \pm 35 \mod{73}$ which shows that, in fact, $b \equiv 2 \mod{12}$. 

We thus have, for $q \in S_8$, that  $E$ is necessarily one of
$$
18q.3.a1, 18q.4.a2, 36q.1.a2, 36q.1.b2, 72q.3.a1, 72q.7.a1 \mbox{ or } 72q.9.a2,
$$
where $E=18q.3.a1$ and $72q.3.a1$ correspond to $q \in S_2$, $E=18q.4.a2$ and $72q.9.a2$ come from $q \in S_4$, $E=72q.7.a1$ arises from $q \in S_6$ and both $E=36q.1.a2$ and $36q.1.b2$ occur for every $q \in S_8$.
From our previous discussion, if $E=18q.3.a1$, there exist integers $a_2 \geq 5$ and $b_2 \geq 3$ with  $a_2 \equiv 3 \mod{6}$, $b_2 \equiv 2 \mod{12}$ and $q = 3^{b_2}-2^{a_2}$. If we choose $p$ such that $\left( \frac{4-a_2}{p} \right) =-1$ (where $4-a_2$ is negative and odd), we again contradict the fact that the $p$-torsion of our Frey-Hellegouarch curve $F$ is symplectically isomorphic to that of $E$. If $E=18q.4.a2$, we have that $q=d_4^2 + 2^{a_4} 3^{b_4}$ for integers $d_4, a_4$ and $b_4$ (not necessarily unique), with $a_4 \geq 8$ even and $b_4$ odd. In this case, we constrain $p$ to satisfy $\left( \frac{-3 b_4}{p} \right)   =-1$. Similarly, for $E =36q.1.a2$ or $36q.1.b2$, we impose the conditions
$$
\left( \frac{2}{p} \right)   = \left( \frac{6}{p} \right)   = -1,
$$
while, for $E=72q.3.a1$, we have  $q = 3^{b_2}-8$ with $b_2 \equiv 2 \mod{12}$, and choose 
$$
\left( \frac{-6 b_2}{p} \right)   =\left( \frac{-3 b_2/2}{p} \right) = -1;
$$
note that, importantly for us, $-3b_2/2$ is odd. If $E=72q.7.a1$, we can write $q=\frac{d^2 + 3^{b_6}}{4}$ with $b_6$ odd and take $\left( \frac{-3 b_6}{p} \right)   =-1$. Finally, if $E=72q.9.a2$, we have $q=d_4^2 + 2^{4} 3^{b_4}$ for integers $d_4$ and $b_4$ (not necessarily unique), with  $b_4$ odd, and can derive a contradiction by choosing $p$ such that $\left( \frac{-3 b_4}{p} \right)   =-1$. 
In summary, if $q \in S_8$, we reach our desired conclusion by choosing our finite set of $\kappa_i$ as in case (ii).
This completes the proof of Theorem \ref{shark99}.

\subsection{Proof of Theorem~\ref{main3}} 
Let $q=2^a 3^b -1$ with $a \geq 5$ and $b \geq 1$ be a prime. Then $q \in S_1$ and hence, from Proposition \ref{intersect}, $q \not\in S_i$ for $i \in \{ 2, 3, 5, 7, 8 \}$. On the other hand, $q \not\in S_i$ for $i \in \{ 4, 6 \}$, since $q \equiv 2 \mod{3}$. It follows that, in this case, a solution to (\ref{ABC}) with $p \geq q^{2q}$ necessarily corresponds to an elliptic curve in the isogeny class $18q.1.a$. The result now follows from the equalities in \eqref{sample}.

\subsection{Proof of Theorem~\ref{main4}}  
Suppose that $A, B$ and $C$ are coprime, nonzero integers satisfying \eqref{quiet} with $p \geq 17$, 
and write $F$ for the 
corresponding Frey-Hellegouarch curve. 
Note that, for $q=5$, we are led to consider  levels $90, 180$ and $360$. For these levels,
each weight $2$, cuspidal newform $f$ corresponds to one of the $9$
isogeny classes of elliptic curves $E/\Q$ given in Cremona's notation by
$$
 90a, \; 90b, \; 90c, \; 180a, \; 360a, \; 360b, \; 360c, \; 360d \; \text{ and } \; 360e.
$$
For $E$ in the isogeny classes $90a, 90b, 180a, 360b$ and $360c$, we find that 
$a_7(E)=2$ and hence, it follows from \eqref{shark}, the Hasse bound and 
the level lowering condition that 
$$
2 \equiv 0, \pm 4, \pm 8 \mod{p}. 
$$
This gives a contradiction with $p \geq 17$.

Next, we treat the isogeny class $360e$. Taking $E=360e2$, we find that $e(E,3)=2$. 
In the beginning of Section~\ref{inertia4At3}, it is explained that either $F$ has 
potentially multiplicative reduction at $\ell=3$ or potentially good reduction with $e(F,3) = 4$, 
 a contradiction in either cases.

Finally, suppose that $E$ is in one of the isogeny classes $90c$, $360a$ and $360d$, say, $E=90c2, 360a2$ or $360d2$. We will apply \cite[Theorem~4]{Fr} and \cite[Proposition~2]{KO} with $\ell \in \{ 2, 3, q \}$. In all cases, from \cite[Proposition~2]{KO} with $\ell=q$, we have that our isomorphism between  $F[p]$ and $E[p]$ is necessarily symplectic.
If $E=90c2$, we may thus further appeal to \cite[Proposition~2]{KO} with  $\ell=2$ and $\ell=3$ 
(after suitable twist) to conclude that
\begin{equation} \label{test1}
\left( \frac{-1}{p} \right)   =  \left( \frac{-2}{p} \right) =1.
\end{equation}
For $E=360a2$, we apply \cite[Theorem~4]{Fr} and \cite[Proposition~2]{KO} with $\ell=3$, whereby
\begin{equation} \label{test2}
\left( \frac{2}{p} \right)   = \left( \frac{-3}{p} \right)   =1.
\end{equation}
If $E=360d2$, we apply \cite[Proposition~2]{KO} with $\ell =3$ to conclude that
\begin{equation} \label{test3}
\left( \frac{-6}{p} \right)   =1.
\end{equation}
We reach our desired conclusion upon observing that, if $p \equiv 13, 19$ or $23 \mod{24}$, then each of (\ref{test1}), (\ref{test2}) and (\ref{test3}) fails to hold.

\subsection{Further results for small primes $q$} \label{last}
To conclude this paper, we will provide some more explicit results for small values of $q$.
We obtain these by proceeding in a similar fashion to the proof of Theorem~\ref{main4}. Making the further assumption that $p \geq q^{2q}$, we reduce the calculation
to consideration of 
elliptic curves $E$ with non-trivial rational $2$-torsion, conductor in the set $\{ 18q, 36q, 72q \}$ and
such that $\Delta(E)$ is of the shape $T^2$ or $-3T^2$ for some integer $T$ (i.e. those corresponding to primes in $S_0$).
We summarize our results as follows.
\begin{theorem} \label{rump}
If $p$ and $q$ are primes with $p \geq q^{2q}$, then there are no coprime, nonzero integers $A, B$ and $C$ satisfying equation $\eqref{gump}$ with $q$ in the following table and $p$ satisfying the listed conditions.
\begin{small}
$$
\begin{array}{cc|cc}
q & p & q & p \\ \hline
5 & 13, 19, 23 \mod{24} & 47 & 5, 11, 13, 17, 19, 23 \mod{24} \\
11 & 13, 17, 19, 23 \mod{24} & 59 & 5, 7, 11, 13, 19, 23 \mod{24} \\
13 & 11 \mod{12} & 67 &  7, 11, 13, 29, 37, 41, 43, 59, 67, 71, 89, 101, 103 \mod{120}  \\
17 & 5, 17, 23 \mod{24} & 71 &  5 \mod{6} \\
23 & 19, 23 \mod{24} & 73 &  41, 71, 89 \mod{120} \\
29 & 7, 11, 13, 17, 19, 23 \mod{24} & 79 &  5, 7, 11, 13, 19, 23 \mod{24} \\
31 & 5, 11 \mod{24} & 89 & 13, 17, 19, 23  \mod{24} \\
41 & 5, 7, 11, 17,19,  23 \mod{24} & 97 &  11 \mod{12} \\
\end{array}
$$
\end{small}
\end{theorem}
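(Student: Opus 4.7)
Our plan is to prove Theorem \ref{rump} by explicit case analysis for each of the sixteen primes $q \leq 97$ in the table, following the strategy already used for $q=5$ in the proof of Theorem \ref{main4}. Under the hypothesis $p \geq q^{2q}$, Theorem \ref{main1} guarantees that any solution $(A,B,C)$ to \eqref{gump} gives rise, via the Frey--Hellegouarch curve $F$ and the isomorphism \eqref{iso1}, to an elliptic curve $E$ whose isogeny class lies in $S_0$ and whose conductor lies in $\{18q, 36q, 72q\}$. For each specific $q$, we begin by enumerating all such possible isogeny classes: we use Proposition \ref{intersect} to determine which sets $S_i$ contain $q$, then read off the corresponding curves from Theorems \ref{theorem18q}, \ref{theorem36q} and \ref{theorem72q}, together with any of the ``exceptional'' classes $90c, 126b, 252a, 306c, 342f, 360a, 360d, 936d, 5256e$ whose conductor happens to coincide with one of $18q$, $36q$ or $72q$.

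For each resulting candidate $E$, we dispose of the associated newform in one of two ways. For isogeny classes that lie in the larger set $S$ but not in $S_0$ (which can still arise at finite level via the full list in Section \ref{proofie}, even though Theorem \ref{main1} rules them out at large levels), we eliminate them as in the proof of Theorem \ref{main4}: we select a small prime $\ell \equiv 1 \pmod 6$ with $\ell \nmid 6q$ and $a_\ell(E) \not\equiv \ell+1 \pmod 4$, and combine \eqref{eqn:n-big-bound}, \eqref{bad} and \eqref{shark} with the Hasse bounds to conclude $p \leq \ell+1+2\sqrt{\ell}$, contradicting $p \geq q^{2q}$. For the surviving candidates $E \in S_0$, we apply Proposition \ref{multiplicativeAtq} together with the local symplectic criteria collected at the ends of Subsections \ref{fool18}, \ref{fool36} and \ref{fool72}. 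Since we have $\alpha = 1$ in \eqref{gump}, Proposition \ref{multiplicativeAtq} forces the isomorphism $F[p] \cong E[p]$ to be symplectic, and the other criteria then yield quadratic-residue conditions of the form $\Legendre{\kappa}{p} = \pm 1$ for certain small integers $\kappa$ depending on the parameters $(a,b,d,\dots)$ in the representation of $q$ in its respective $S_i$.

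The residue classes for $p$ recorded in the table are then exactly those which force at least one such forced symplectic constraint to fail for every possible candidate $E$ associated to the given $q$. As the $\kappa$ arising in Sections \ref{fool18}--\ref{fool72} are supported on the primes in $\{2,3\} \cup \{b : b \text{ an exponent of } 3 \text{ in some } S_i\text{-representation of } q\}$, the resulting moduli always collapse to one of $12$, $24$ or $120$ in the range $q \leq 97$. The main obstacle is organizational rather than conceptual: for each $q$ we must enumerate all non-isogenous curves corresponding to distinct representations of $q$ within a given $S_i$ (which, by Subsection \ref{sets}, is always a finite enumeration, though it can be nontrivial when $q$ lies in the intersection of several $S_i$, as happens for instance at $q=13 \in S_1 \cap S_2 \cap S_4 \cap S_8$ or $q=73 \in S_1 \cap S_2 \cap S_4$), and then check compatibility: that the listed residue conditions on $p$ really do rule out \emph{every} surviving candidate simultaneously. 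Once the tabulation of curves is in hand for each $q$, the verification reduces to a mechanical check of Legendre-symbol compatibilities.
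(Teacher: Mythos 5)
Your proposal matches the paper's own treatment: the authors likewise reduce, under $p \geq q^{2q}$, to the $S_0$-curves of conductor $18q$, $36q$ or $72q$ via Theorem \ref{main1} and Corollary \ref{wonder}, and then run the symplectic argument of Proposition \ref{multiplicativeAtq} together with the conclusions of Subsections \ref{fool18}--\ref{fool72} exactly as in the proof of Theorem \ref{main4}, leaving the per-prime enumeration and Legendre-symbol check as a finite computation. This is essentially the same approach, carried out at the same level of detail as the paper's own (very terse) justification.
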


Here, we have omitted both primes for which Theorem~\ref{main1} applies directly (i.e. $q = 53$ and $83$, according to Corollary~\ref{cor-blimey}) and also primes for which the symplectic method fails to eliminate exponents, i.e. $q \in \{ 7, 19, 37, 43, 61 \}$.
For these latter primes, observe that, in each case,  $q$  is of the shape $(3d^2+1)/4$ or $3d^2+16$ for an integer $d$; as explained in Section~\ref{trivial}, these are those primes for which there exists a solution to (\ref{gump}) (with $C=1$) for every exponent $p$ (whereby we expect our techniques to fail), together with those for which we have a ``trivial'' solution to the related equation $A^3+B^3=8q C^p$, again for every $p$.

\section{Appendix : $c$-invariants} \label{appendix}

\subsection{Conductor $18q$}
We have
\begin{small}
$$
\begin{array}{|cc|cc|} \hline
\mbox{ Curve} & c_4  & \mbox{ Curve} & c_4 \\ \hline
18q.1.a1 & 3^2 \cdot  \left( 2^{2a} 3^{2b} + (-1)^\delta 2^{a} 3^{b} + 1 \right) & 18q.3.a3 & 3^2 \cdot  \left( 2^{2a-4}  +  (-1)^{\delta_1+\delta_2} 2^{a}3^b + 3^{2b} \right) \\
18q.1.a2 & 3^2 \cdot  \left( 2^{2a+4} 3^{2b} + (-1)^{\delta} 2^{a+4} 3^{b} + 1 \right)  & 18q.3.a4 & 3^2 \cdot  \left( 2^{2a}  + (-1)^{1+\delta_1+\delta_2} 7 \cdot 2^{a+1}3^b + 3^{2b} \right) \\ 
18q.1.a3 & 3^2 \cdot  \left( 2^{2a-4} 3^{2b} + (-1)^{\delta} 2^{a} 3^{b} + 1 \right) & 18q.4.a1 & (-1)^{\delta_1} 3^2 \left( q - (-1)^{\delta_2} 2^{a-2} 3^b \right) \\
18q.1.a4 & 3^2 \cdot  \left( 2^{2a} 3^{2b} + (-1)^{\delta+1} 7 \cdot  2^{a+1} 3^{b} + 1 \right) & 18q.4.a2 &  (-1)^{\delta_1} 3^2 \left( q - (-1)^{\delta_2} 2^{a+2} 3^b \right) \\ 
18q.2.a1 & 3^2 \cdot  \left( 2^{2a}  + 2^{a} + 1 \right) & 18q.5.a1& 3^2 \left( d^2 + (-1)^{\delta_1+\delta_2} 2^{a-2} \right) \\
18q.2.a2 & 3^2 \cdot  \left( 2^{2a+4} + 2^{a+4} + 1 \right)  & 18q.5.a2 & 3^2 \left( d^2 - (-1)^{\delta_1+\delta_2} 2^{a} \right) \\
18q.2.a3 & 3^2 \cdot  \left( 2^{2a-4} + 2^{a} + 1 \right) & 18q.5.b1 & 3^4 \left( d^2 + (-1)^{\delta_1+\delta_2} 2^{a-2} \right) \\
18q.2.a4 & 3^2 \cdot  \left( 2^{2a} -7 \cdot 2^{a+1} + 1 \right) & 18q.5.b2 &  3^4 \left( d^2 - (-1)^{\delta_1+\delta_2} 2^{a} \right) \\ 
18q.3.a1 & 3^2 \cdot  \left( 2^{2a}  + (-1)^{\delta_1+\delta_2} 2^{a}3^b + 3^{2b} \right) & 18q.6.a1 & 3^2 \left( d^2 + 3 \cdot 2^{a-2} \right) \\
18q.3.a2 & 3^2 \cdot  \left( 2^{2a+4}  +  (-1)^{\delta_1+\delta_2} 2^{a+4}3^b + 3^{2b} \right) & 18q.6.a2 & 3^2 \left(d^2 - 3 \cdot 2^a \right) \\
 \hline
\end{array}
$$
\end{small}

$$
\begin{array}{|cc|} \hline
\mbox{ Curve} & c_6  \\ \hline
18q.1.a1 &  (-1)^{\delta+1} 3^3 \left( 2^{a+1} 3^b + (-1)^{\delta} \right)  \left( 2^{a} 3^b+(-1)^{\delta+1} \right) \left( 2^{a-1} 3^b + (-1)^{\delta} \right) \\
18q.1.a2 & 3^3 \left( 2^{a+1} 3^b + (-1)^{\delta} \right) \left( 2^{2a+5} 3^{2b} + (-1)^{\delta} 2^{a+5} 3^b -1 \right)  \\ 
18q.1.a3 & (-1)^{\delta+1} 3^3 \left( 2^{a-1} 3^b + (-1)^{\delta} \right)  \left( 2^{2a-5} 3^{2b} +(-1)^{\delta+1}  2^a 3^b - 1 \right)  \\
18q.1.a4 & (-1)^{\delta+1} 3^3 \left( 2^a3^b+(-1)^{\delta+1} \right) \left( 2^{2a}3^{2b} +(-1)^{\delta} 17 \cdot 2^{a+1}3^b+1 \right) \\ 
18q.2.a1 & -3^3 \left( 2^{a+1} +1 \right) \left( 2^{a} -1 \right) \left( 2^{a-1} +1 \right)  \\
18q.2.a2 & -3^3 \left( 2^{a+1} + 1 \right) \left( 2^{2a+5} +  2^{a+5}  -1 \right)   \\
18q.2.a3 & -3^3 \left( 2^{a-1} + 1 \right) \left( 2^{2a-5} -  2^{a}  -1 \right)  \\
18q.2.a4 & -3^3 \left( 2^{a} - 1 \right) \left( 2^{2a} +  17 \cdot 2^{a+1}  +1 \right)  \\ 
18q.3.a1 & (-1)^{\delta} 3^3 \left( 2^{a+1} +  (-1)^{\delta_1+\delta_2} 3^b \right) \left( 2^{a} -  (-1)^{\delta_1+\delta_2} 3^b \right) \left( 2^{a-1} +  (-1)^{\delta_1+\delta_2} 3^b \right)  \\
18q.3.a2 & (-1)^{\delta} 3^3 \left( 2^{a+1} +(-1)^{\delta_1+\delta_2} 3^b \right) \left( 2^{2a+5} +(-1)^{\delta_1+\delta_2} 2^{a+5} 3^b-3^{2b} \right)  \\
18q.3.a3 & (-1)^{b+1} 3^3 \left( (-1)^{\delta_1+\delta_2} 2^{a-1} + 3^b \right) \left( 2^{2a-5}- (-1)^{\delta_1+\delta_2} 2^a 3^b - 3^{2b} \right) \\
18q.3.a4 &  (-1)^{b} 3^3 \left( 3^b +(-1)^{b+\delta} 2^a \right) \left( 2^{2a} + (-1)^{\delta_1+\delta_2} 17 \cdot 2^{a+1} 3^b+3^{2b} \right) \\
18q.4.a1 & 3^3 d \left( d^2+ (-1)^{\delta_1+\delta_2} 2^{a-3} 3^{b+2}  \right) \\
18q.4.a2 & 3^3 d \left( d^2+(-1)^{\delta_1+\delta_2} 2^{a} 3^{b+2}  \right) \\
18q.5.a1 & 3^3 d \left( d^2+ (-1)^{\delta_1+\delta_2} 2^{a-3} 3  \right) \\
18q.5.a2 & 3^3 d \left( d^2+(-1)^{\delta_1+\delta_2} 2^{a} 3  \right)  \\
18q.5.b1 & -3^6 d \left( d^2+ (-1)^{\delta_1+\delta_2} 2^{a-3} 3  \right) \\
18q.5.b2 & -3^6 d \left( d^2+(-1)^{\delta_1+\delta_2} 2^{a} 3  \right) \\
18q.6.a1 & 3^3 d \left( d^2+ 2^{a-3} 3^{2}  \right)  \\
18q.6.a2 &  3^3 d \left( d^2+2^{a} 3^{2}  \right) \\
 \hline
\end{array}
$$

\subsection{Conductor $36q$}
We have

$$
\begin{array}{|cc|cc|} \hline
\mbox{ Curve} & c_4 & \mbox{ Curve} & c_4 \\ \hline
36q.1.a1 & 2^4 3 \left( q^2-1 \right) &  36q.4.a1 & 2^4 3^2 \left(d^2 - 3^{b+1} \right) \\
36q.1.a2 & 2^4 3 \left( 16q^2-1 \right)  &  36q.4.a2 &  2^4 3^2 \left( d^2 + 4 \cdot 3^{b+1} \right) \\ 
36q.1.b1 & 2^4 3^3 \left( q^2-1 \right) &  36q.5.a1 & 2^4 3^2 \left(d^2 -  3^{b+1} \right) \\
36q.1.b2 & 2^4 3^3 \left( 16q^2-1 \right) & 36q.5.a2 & 2^4 3^2 \left( d^2 +  4 \cdot 3^{b+1} \right) \\ 
36q.2.a1 & 2^2 3^2 \left( d^2-1 \right) & 36q.6.a1 & 2^4 3^2 \left( d^2-1 \right)   \\
36q.2.a2 & 2^4 3^2 \left( 4d^2+1 \right)  &  36q.6.a2 & 2^4 3^2 \left( d^2+4 \right) \\
36q.2.b1 & 2^2 3^4 \left( d^2-1 \right) &  36q.6.b1 & 2^4 3^4 \left( d^2-1 \right)  \\
36q.2.b2 & 2^4 3^4 \left( 4 d^2+1 \right) & 36q.6.b2 & 2^4 3^4 \left( d^2+4 \right) \\ 
36q.3.a1 & 2^2 3^2 \left(d^2 -  3^{b+1} \right) & 36q.7.a1 &  2^4 3^2 \left(d^2 + 3^{b+1} \right)  \\
36q.3.a2 & 2^4 3^2 \left( 4 d^2 + 3^{b+1} \right) & 36q.7.a2 & 2^4 3^2 \left( d^2 - 4 \cdot 3^{b+1} \right) \\
 \hline
\end{array}
$$

$$
\begin{array}{|cc|cc|} \hline
\mbox{ Curve} & c_6  & \mbox{ Curve} & c_6 \\ \hline
36q.1.a1 & -2^5 3^2 r s \left( q^2+2  \right) &  36q.4.a1 & 2^5 3^3 d \left( 2 d^2 - 3^{b+2}  \right) \\
36q.1.a2 &  -2^6 3^2 r s \left( 32 q^2+ 1  \right) &  36q.4.a2 &  2^6 3^3 d \left( d^2 - 4 \cdot  3^{b+2}  \right) \\ 
36q.1.b1 & 2^5 3^5 r s \left( q^2+2  \right) &  36q.5.a1 & 2^5 3^3 d \left( 2 d^2- 3^{b+2}  \right)  \\
36q.1.b2 & 2^6 3^5 r s \left( 32 q^2+ 1  \right) & 36q.5.a2 & 2^6 3^3 d \left( d^2- 4 \cdot  3^{b+2}  \right)   \\ 
36q.2.a1 & -2^3 3^3 d \left( d^2+3  \right) & 36q.6.a1 & 2^5 3^3 d \left( 2d^2-3  \right) \\
36q.2.a2 & -2^6 3^3 d \left( 8 d^2+ 3  \right)  & 36q.6.a2 & 2^6 3^3 d \left( d^2-12  \right) \\
36q.2.b1 & 2^3 3^6 d \left( d^2+3  \right) &  36q.6.b1 & -2^5 3^6 d \left( 2 d^2-3  \right) \\
36q.2.b2 & 2^6 3^6 d \left( 8 d^2+ 3  \right) & 36q.6.b2 & -2^6 3^6 d \left( d^2-12  \right)   \\ 
36q.3.a1 & -2^3 3^3 d \left( d^2 + 3^{b+2}  \right) & 36q.7.a1 & 2^5 3^3 d \left( 2 d^2 + 3^{b+2}  \right) \\
36q.3.a2 & -2^6 3^3 d \left( 8d^2+ 3^{b+2}  \right) & 36q.7.a2 & 2^6 3^3 d \left( d^2 + 4 \cdot  3^{b+2}  \right) \\
 \hline
\end{array}
$$

\subsection{Conductor $72q$}
We have
$$
\begin{array}{|cc|cc|} \hline
\mbox{ Curve} & c_4  & \mbox{ Curve} & c_4 \\ \hline
72q.1.a1 &  2^4 3^2 \left(3^{2b}+3^b+1 \right) & 72q.5.b1 & 2^4 3^4 \left( d^2 + (-1)^\delta 2^{a-2} \right) \\
72q.1.a2 & 2^4 3^2 \left( 16 \cdot 3^{2b}+ 16 \cdot 3^b+1 \right)  & 72q.5.b2 &  2^4 3^4 \left( d^2 - (-1)^\delta 2^{a} \right) \\
72q.1.a3 &  2^4 3^2 \left( 16 \cdot 3^{2b}+ 16 \cdot 3^b+1 \right) & 72q.6.a1 & 2^2 3^2 \left( d^2-1 \right)  \\
72q.1.a4 & 2^4 3^2 \left( 16 \cdot 3^{2b}+ 16 \cdot 3^b+1 \right) & 72q.6.a2 &  2^4 3^2 \left( 4 d^2+1 \right) \\ 
72q.2.a1 & 2^4 3^2 \left(2^{2a}3^{2b}+ (-1)^\delta 2^a 3^b+1 \right) & 72q.6.b1 & 2^2 3^4 \left( d^2-1 \right) \\
72q.2.a2 &  2^4 3^2 \left(2^{2a+4}3^{2b}+ (-1)^\delta 2^{a+4} 3^b+1 \right) & 72q.6.b2 & 2^4 3^4 \left( 4d^2+1 \right) \\ 
72q.2.a3 & 2^4 3^2 \left(2^{2a-4}3^{2b}+ (-1)^\delta 2^a 3^b+1 \right) & 72q.7.a1 & 2^2 3^2 \left( d^2-3^{b+1} \right) \\
72q.2.a4 &  2^4 3^2 \left(2^{2a}3^{2b}+ (-1)^\delta 7 \cdot 2^{a+1} 3^b+1 \right) & 72q.7.a2 & 2^4 3^2 \left( 4d^2-3^{b+1} \right) \\
72q.3.a1 &  2^4 3^2 \left(3^{2b}+ (-1)^\delta 2^a 3^b+2^{2a} \right) & 72q.8.a1 &  2^4 3^2 \left( d^2+3^{b+1} \right)  \\
72q.3.a2 & 2^4 3^2 \left(3^{2b}- (-1)^\delta 7 \cdot 2^{a+1} 3^b+2^{2a} \right) &  72q.8.a2 & 2^4 3^2 \left( d^2-4 \cdot 3^{b+1} \right) \\ 
72q.3.a3 & 2^4 3^2 \left(3^{2b}+ (-1)^\delta  2^{a+4} 3^b+2^{2a+4} \right), & 72q.9.a1 & 2^4 3^2 \left( d^2+(-1)^{\delta_1+\delta_2} 2^{a-2} 3^{b+1} \right) \\
72q.3.a4 & 2^4 3^2 \left(3^{2b}+ (-1)^\delta  2^{a} 3^b+2^{2a-4} \right)  & 72q.9.a2 & 2^4 3^2 \left( d^2+ (-1)^{\delta_1+\delta_2+1} 2^a \cdot 3^{b+1} \right) \\
72q.4.a1 & 2^4 3^2 \left( d^2 +  1 \right) & 72q.10.a1 & 2^4 3^2 \left( d^2 - 2^{a-2} 3^{b+1} \right) \\
72q.4.a2 &  2^4 3^2 \left( d^2 - 4 \right) & 72q.10.a2 & 2^4 3^2 \left( d^2+ 2^a \cdot 3^{b+1} \right) \\
72q.4.b1 & 2^4 3^4 \left( d^2 + 1 \right) & 72q.11.a1 & 2^4 3^2 \left( d^2 +  24 \right) \\
72q.4.b2 &  2^4 3^4 \left( d^2 - 4 \right) & 72q.11.a2 & 2^4 3^2 \left( d^2 - 96 \right) \\
72q.5.a1 & 2^4 3^2 \left( d^2 + (-1)^\delta 2^{a-2} \right) & 72q.12.a1 & 2^4 3^2 \left( d^2 +  24 \right) \\
72q.5.a2 &  2^4 3^2 \left( d^2 - (-1)^\delta 2^{a} \right) & 72q.12.a2 & 2^4 3^2 \left( d^2 - 96 \right) \\
 \hline
\end{array}
$$

$$
\begin{array}{|cc|} \hline
\mbox{ Curve} & c_6  \\ \hline
72q.1.a1 &  2^5 3^3 ( 3^b-1) (3^b+2) (2 \cdot 3^b+1)   \\
72q.1.a2 & 2^6 3^3 (2 \cdot 3^b+1) \left( 32 \cdot 3^{2b}+ 32 \cdot 3^b-1 \right)   \\ 
72q.1.a3 &  2^6 3^3 (2 \cdot 3^b+1) \left( 32 \cdot 3^{2b}+ 32 \cdot 3^b-1 \right)  \\
72q.1.a4 & 2^6 3^3 (2 \cdot 3^b+1) \left( 32 \cdot 3^{2b}+ 32 \cdot 3^b-1 \right)  \\ 
72q.2.a1 & -2^6 3^3  ((-1)^\delta 2^{a+1} 3^b+1)  \left( 2^{2a-1} 3^{2b} + (-1)^\delta 2^{a-1} 3^b - 1  \right)  \\
72q.2.a2 &  -2^6 3^3  ((-1)^\delta 2^{a+1} 3^b+1)  \left( 2^{2a+5} 3^{2b} + (-1)^\delta 2^{a+5} 3^b - 1  \right) \\
72q.2.a3 & 2^5 3^3  ((-1)^\delta 2^{a-1} 3^b+1)  \left( -2^{2a-4} 3^{2b} + (-1)^\delta 2^{a+1} 3^b +2  \right)  \\
72q.2.a4 & -2^6 3^3  ((-1)^\delta 2^{a} 3^b-1)  \left( 2^{2a} 3^{2b} + (-1)^\delta 17 \cdot 2^{a+1} 3^b +1  \right) \\ 
72q.3.a1 &  2^6 3^3  (-1)^b (3^b- (-1)^\delta 2^a)  \left( 2^{2a} + (-1)^\delta 5 \cdot 2^{a-1} 3^b + 3^{2b}  \right)  \\
72q.3.a2 & 2^6 3^3  (-1)^b (3^b- (-1)^\delta 2^a)  \left( 2^{2a} + (-1)^\delta 17 \cdot 2^{a+1} 3^b + 3^{2b}  \right)  \\
72q.3.a3 & -2^6 3^3  (-1)^b (3^b+ (-1)^\delta 2^{a+1})  \left( 2^{2a+5} + (-1)^\delta 2^{a+5} 3^b - 3^{2b}  \right) \\
72q.3.a4 &  2^5 3^3  (-1)^b (3^b+ (-1)^\delta 2^{a-1})  \left( -2^{2a-4} + (-1)^\delta 2^{a+1} 3^b +2 \cdot 3^{2b}  \right)   \\
72q.4.a1 &  -2^5 3^3 d \left( 2 d^2 + 3  \right)  \\
72q.4.a2 &  -2^6 3^3 d \left( d^2 + 12 \right)  \\
72q.4.b1 &  2^5 3^6 d \left(  2 d^2 + 3  \right)  \\
72q.4.b2 &  2^6 3^6 d \left( d^2 + 12 \right) \\
72q.5.a1 &  -2^5 3^3 d \left( (-1)^\delta 2 d^2 + 3 \cdot 2^{a-2} \right)  \\
72q.5.a2 &  -2^6 3^3 d \left( (-1)^\delta d^2 + 3 \cdot 2^{a} \right)  \\
72q.5.b1 &  2^5 3^6 d \left( (-1)^\delta 2 d^2 + 3 \cdot 2^{a-2} \right)  \\
72q.5.b2 &  2^6 3^6 d \left( (-1)^\delta d^2 + 3 \cdot 2^{a} \right) \\
72q.6.a1 & 2^5 3^3 d \left(\frac{d^2+3}{4} \right) \\
72q.6.a2 & 2^6 3^3 d \left( 8d^2+3 \right) \\
72q.6.b1 &  -2^5 3^6 d \left(\frac{d^2+3}{4} \right) \\
72q.6.b2 & -2^6 3^6 d \left( 8d^2+3 \right) \\
72q.7.a1 & 2^5 3^3 d \left(\frac{d^2+3^{b+2}}{4} \right) \\
72q.7.a2 & 2^6 3^3 d \left( 8d^2 + 3^{b+2} \right)  \\
72q.8.a1 & - 2^5 3^3 d \left(2d^2 + 3^{b+2} \right) \\
72q.8.a2 &  -2^6 3^3 d \left( d^2 + 4 \cdot 3^{b+2} \right) \\
72q.9.a1 & 2^6 3^3 d \left( d^2 +(-1)^{\delta_1+\delta_2} 2^{a-3} 3^{b+2} \right) \\
72q.9.a2 & 2^6 3^3 d \left( d^2 + (-1)^{\delta_1+\delta_2} 2^a \cdot 3^{b+2} \right)  \\
72q.10.a1 & 2^6 3^3 d \left( d^2 - 2^{a-3} 3^{b+2} \right)  \\
72q.10.a2 & 2^6 3^3 d \left( d^2 - 2^a \cdot 3^{b+2} \right) \\
72q.11.a1 & 2^6 3^3 d \left( d^2+36 \right) \\
72q.11.a2 & 2^6 3^3 d \left( d^2+288 \right) \\
72q.12.a1 & 2^6 3^3 d \left( d^2+36 \right)  \\
72q.12.a2 & 2^6 3^3 d \left( d^2+288 \right) \\
 \hline
\end{array}
$$


\end{document}